\newtheorem{theorem}{Theorem}[section]
\newtheorem{lemma}[theorem]{Lemma}
\newtheorem{proposition}{Proposition}
\theoremstyle{definition}
\newtheorem{definition}[theorem]{Definition}
\newtheorem{remark}{Remark}
\date{}                                     %TITOLO\date\today
\title{\bf{Spatiotemporal attractors generated by the Turing-Hopf bifurcation in a time-delayed reaction-diffusion system}\thanks {Supported by the National Natural Science Foundation of China (No.11371112).}}
\author[a]{Qi An\thanks {
		E-mail address: anqi@stu.hit.edu.cn.}}
\author[a]{Weihua Jiang\thanks {Corresponding author. E-mail address: jiangwh@hit.edu.cn. }}
\affil[a]{Department of Mathematics, Harbin Institute of Technology, Harbin, 150001, P.R. China.}
\begin{document}
\maketitle
%The abstract of your paper
\begin{abstract}
We study the Turing-Hopf bifurcation and give a simple and explicit calculation formula of the normal forms for a general two-components system of reaction-diffusion equation with time delays. We declare that our formula can be automated by Matlab. At first, we extend the normal forms method given by Faria in 2000 to Hopf-zero singularity. Then,
an explicit formula of the normal forms for Turing-Hopf bifurcation are given.  Finally, we obtain the possible attractors of the original system near the Turing-Hopf singularity by the further analysis of the normal forms, which mainly include, the spatially non-homogeneous  steady-state solutions, periodic solutions and  quasi-periodic solutions.
\end{abstract}

%The title of your section 1
\section{Introduction}
Partial functional differential equations (PFDEs) are often used as  mathematical models to describe various phenomena in population ecology, biology and chemistry by many researchers 
{(see e.g. \cite{ Chen2017,Hadeler2012Interaction,Wu1996Theory,Zhao2003Dynamical} and references therein)}. 
The theory of PFDEs had been extensively studied since 1970s, and the
main published work included 
existence, uniqueness regularity and  stability of solutions, comparison theorem, upper and lower solutions, traveling wave solutions, bifurcation theory, centre manifolds and normal forms theory {(see e.g. \cite{Faria2000Normal,Li2006Existence,Lin1992Centre,martin1991reaction,pao1996dynamics,pao2002convergence,Travis1976Existence,Wu2001Traveling})}.

Patterns are ubiquitous in nature and always have certain regularities in space or time, such as cracks on the trunks, patterns of the animal's epidermis and structures of the desert surface. The importance of formation and development of patterns is self-evident. In 1952, Turing \cite{Turing1952} proposed a classical mechanism to  explain the formation of spatially heterogeneous patterns. He suggested that `morphogens' through the reaction and diffusion with other substances give a feedback control to the morphogenesis. From a mathematical point of view, the patterns are the stationary solutions of the reaction diffusion systems, which are results from the steady-state bifurcation. In the later researches and experiments, such Turing structures have been confirmed and widely applied to explain the main phenomena of morphogenesis (see e.g. \cite{Castets1990Experimental,Lengyel1991Modeling,Murray2003II,Ouyang1991Transition,Segel1972Dissipative}). 
In this paper, we will focus on another spatio-temporal pattern, which is oscillate in both space and time. This types of patterns had been mentioned in {\cite{Maini1997Spatial,Rovinsky1992Interaction,Shi2015Spatial}}. 
And Turing-Hopf bifurcation, which is one types of Hopf-zero bifurcation, is considered as one of the mechanisms to generate them.

As is well known, diffusion could lead to the emerge of Turing patterns, while time delays always cause periodic oscillations in dynamical systems. Therefore, to study the the Hopf-zero (Turing-Hopf) bifurcation of the PFDEs  is significant. In this work, we consider
a general reaction-diffusion system with a discrete delay consisting of two equations subject to Neumann boundary conditions and defined on a one-dimensional spatial domain $\Omega=(0,l\pi)$, with $l\in \mathbb{R^{+}}$.
\begin{equation}\label{A}
\left\{
\begin{aligned}
&\frac{du(t)}{dt}\!-\!d_{1}(\mu)\Delta u(t)\!=\!f(\mu,u(t),v(t),u(t\!-\!1),v(t\!-\!1)),\; x\in(0,l\pi), ~t>0,\\
&\frac{dv(t)}{dt}\!-\!d_{2}(\mu)\Delta v(u)\!=\!g(\mu,u(t),v(t),u(t\!-\!1),v(t\!-\!1)),\; x\in(0,l\pi), ~t>0,\\
&u_x=v_x=0, \hspace{6.05cm}x=0~\mathrm{or}~l\pi, ~t>0,\\
&u(x,s)=\varphi(x,s), v(x,s)=\psi(x,s),\hspace{2.2cm} x\in(0,l\pi), ~s\in[-1,0].
\end{aligned} \right.
\end{equation}
Here $\mu =(\mu_1,\mu_2)\in{\mathbb{R}}^{2},$ $d_{1}, d_{2}\in C^{1} ({\mathbb{R}}^{2},\mathbb{R}_{+}),$ $f, g\in C^{1,k}({\mathbb{R}}^{2}\times {\mathbb{R}}^{4},\mathbb{R})$  with $k\geq3$ and satisfy $f(\mu,0,0)=g(\mu,0,0)=0 ,$ for all $ \mu\in{\mathbb{R}}^{2}.$

In fact, the patterns which are formed by Hopf-zero bifurcation of other types of equations has been extensively studied. Such as the periodic or quasi-periodic oscillations in retarded function differential equations (RFDEs) (see e.g. \cite{Wang2010Hopf}), the spatially inhomogeneous coexisting steady states and periodic solutions in PDEs (see e.g. \cite{Baurmann2007Instabilities,De1996Spatiotemporal,Yang2016Spatial}). 
To the best of our knowledge, the results in PFDEs are very limited.

Center manifold and the normal forms theory are important ways to study the bifurcation of nonlinear systems. For PFDEs, Faria \cite{Faria2000Normal} proposed an excellent method to compute the normal forms based on the center manifold theory introduced in \cite{Lin1992Centre}. 
This normal forms method had been widely applied in Hopf bifurcation, and it plays a key role in the existence and stability of the spatial periodic solutions (see e.g. \cite{huang2017bifurcation,xu2017bifurcation}).  

The purpose of this paper is to extend the normal forms method given by Faria and Magalhaes \cite{Faria2000Normal,Faria1995Normal} to Hopf-zero bifurcation with perturbation parameters in PFDEs. And then give the explicit calculation formula of the normal forms for the Turing-Hopf bifurcation as simple as possible. 

Firstly, we present the derivation process of the  decomposition of phase space and center manifold reduction, then give an abstract formula of the normal forms near the Hopf-zero bifurcation point of \eqref{A} by extend the method proposed in \cite{Faria2000Normal,Faria1995Normal,Wu1996Theory}. 
Secondly, we applied the abstract formula to Turing-Hopf bifurcation and obtain the exact calculation formula of the normal forms. Finally, we show that the dynamics of \eqref{A} can be governed by a 3-dimensions ordinary differential system, with the parameters can be expressed by those parameters in original PFDEs. By studying the corresponding 2-dimensions amplitude system, the dynamical properties of the original system can be obtained.  Mainly include, the existence of 
\begin{itemize}
	\item the coexisting spatially non-homogeneous steady-state solutions,
	\item the coexisting spatially non-homogeneous periodic solutions,
	\item the coexisting  spatially non-homogeneous quasi-periodic solutions.
\end{itemize} 
%Actually, the final formula of the normal forms for Turing-Hopf bifurcation involves only simple algebraic operations.
The advantages of our calculation formula can be conclude as, 
\begin{enumerate}[(1)]
	\item it can be applied directly to PDEs,
	\item it can be extend to PFDEs with other boundary conditions or higher spatial domains by reanalyze the eigenvalue problem,
	\item  the entire computing process can be fully automated after getting the  bifurcation points. In other words, it is suitable for the computer calculation.
\end{enumerate}

The paper is organized as following: In Section 2, eigenvalue problems, decomposition of the phase space, center manifold reduction and the normal forms for Hopf-zero bifurcation of \eqref{A} are presented.
In Section 3, giving the precise eigenfunctions and the exact calculation formula of the normal forms to Turing-Hopf bifurcation. We claim that Hopf-pitchfork bifurcation is arising.
In Section 4, the dynamic behaviors of the original system have been studied through the 3-orders truncated normal forms and the corresponding amplitude system. In Section 5, taking Holling-Tanner models as an example to present the calculation process of the normal forms. Some numerical simulations of spatiotemporal patterns and spatial patterns are also demonstrated. Finally, the paper ends with some conclusions. The formulas of some coefficient vectors that mentioned in Section 3 are given in Appendix A.

\section{Reduction and the normal forms for PFDEs with Hopf-zero sigularity}
This section is divided into three parts. The first part is the eigenvalue problem for system \eqref{A}, the second part is the decomposition of phase space, the third part is the calculation of the normal forms near a Hopf-zero bifurcation point.

First of all,
define a real-valued Hilbert space $$X:=\{(u,v)\in{H^{2}(0,l\pi)\times H^{2}(0,l\pi)} : (u_{x},v_{x})|_{x=0,l\pi}=0\}.$$
The corresponding complexification is $X_{\mathbb{C}}:=\{ x_{1}+ix_{2} : x_{1},x_{2}\in X \}$, with the complex-valued $L^2$ inner product 
$$\langle U_{1},U_{2}\rangle=\int_{0}^{l\pi}(\overline{u}_{1}u_{2}+\overline{v}_{1}v_{2})dx,\quad\mathrm{for}\;\; U_{i}=(u_{i},v_{i})\in X_{\mathbb{C}}.$$
Let 
$\mathcal{C}:=C([-1,0],X_{\mathbb{C}})$ denote the phase space
with the sup norm. We write $\varphi^{t}\in \mathcal{C}$ for  $\varphi^{t}(\theta)=\varphi(t+\theta),-1\leq \theta \leq0.$

The system \eqref{A} can be rewritten as an abstract equation in phase space $\mathcal{C}$:
\begin{equation}\label{eqB}
\frac{d}{dt}U(t)=D(\mu)\Delta U(t)+L(\mu)(U^{t})+F(\mu,U^{t}).
\end{equation}
Here $D(\mu)=\mathrm{diag}~(d_{1}(\mu),d_{2}(\mu))$,~ $U=(u,v)^{\mathrm{T}}\in X_{\mathbb{C}}$,~
$U^{t}=(u^{t},v^{t})^{\mathrm{T}}\in \mathcal{C}$.
$L:{\mathbb{R}}^{2}\times\mathcal{C}\rightarrow X_{\mathbb{C}} $ is a bounded linear operator.
And $F:{\mathbb{R}}^{2}\times\mathcal{C}\rightarrow X_{\mathbb{C}} $ is a $C^{k}$ $(k\geq3)$ function and satisfies $F(\mu,0)=0,$ $D_{\varphi}F(\mu,0)=0$.

The linearized equation of \eqref{eqB} at $(0,0)$ is:
\begin{equation}\label{eqC}
\frac{d}{dt}U(t)=D(\mu)\Delta U(t)+L(\mu)(U^{t}).
\end{equation}

\subsection{Eigenvalues problem}\label{eigenvalues}

It is well known that the eigenvalue problem $$\varphi''(x)=\lambda\varphi(x),\quad x\in (0,l\pi),\quad \varphi'(0)=\varphi'(l\pi)=0,$$ has eigenvalues $\lambda_{n}=-\frac{n^{2}}{l^{2}}(n=0,1,2,\cdots),$ with corresponding eigenfunctions
\begin{equation*}
\beta_{n}(x)=\frac{\cos{\frac{n}{l}}x} {\|\cos{\frac{n}{l}}x\|_{L^{2}}}=
\left\{\begin{aligned}
&\sqrt{\frac{1}{l\pi}},& &n=0,&\\
&\sqrt{\frac{2}{l\pi}}\cos{\frac{n}{l}}x,& &n\geq 1.&\\
\end{aligned}       \right.
\end{equation*}
Let $\varphi_{n}^{(1)}=(\beta_{n},0)^{\mathrm{T}},$ $\varphi_{n}^{(2)}=(0,\beta_{n})^{\mathrm{T}}.$
Then $\{\varphi_{n}^{(i)}\}_{n\geq 0}$ are eigenfunctions of $D(\mu)\Delta$ with corresponding eigenvalues $\lambda_{n}^{(i)}(\mu)=-d_{i}(\mu)\frac{n^{2}}{l^{2}}$ $(i=1,2).$ And $\{\varphi_{n}^{(i)}\}_{n\geq 0}$ form an orthonormal basis of $X.$
\begin{definition}
	For $\lambda=\lambda(\mu)$, it is called a characteristic value of \eqref{eqC} if there exists a
	\begin{displaymath}
	y={\sum_{n=0}^{\infty}{a_{n}\varphi_{n}^{(1)}+b_{n}\varphi_{n}^{(2)}}}={\sum_{n=0}^{\infty} {\left(\begin{array}{c}
			a_{n}\\ b_{n}
			\end{array}\right)}\beta_{n}}
	\in {{dom(\Delta)} \backslash {\{0\}}}
	\end{displaymath}
	solving the characteristic equation
	\begin{equation}\label{eqD}
	\mathbf{\Delta}(\lambda)y=\lambda Iy-D(\mu)\Delta y-L(\mu)(e^{\lambda \cdot}y)=0. 
	\end{equation}
	The function $e^{\lambda\theta} y\in\mathcal{C}$ is called the characteristic function corresponding to the characteristic value $\lambda$.
\end{definition}

Decomposing $X$ by $\{\varphi_{n}^{(i)}\}_{n\geq 0}$,  an equivalent definition of the eigenvalues can be obtained.
That is $\lambda=\lambda(\mu)$ is a characteristic value of \eqref{eqC} if and only if there exists a $n\geq 0$ such that
\begin{equation}\label{eqCE}
\mathrm{det}[~\lambda I+\frac{n^{2}}{l^{2}} D(\mu)-L(\mu)(e^{\lambda \cdot}I)~] = 0.
\end{equation}
Moreover, we call that $\lambda=\lambda(\mu)$ is mode of $n$, and write as $\lambda(mod(n))$.

To consider the Hopf-zero bifurcation, we assume that for some $\mu_{0}=(\mu_{0,1},\mu_{0,2})\in {\mathbb{R}}^{2}$, the following condition is holds:\\
$(\textbf{H}1)$ There exists a neighborhood $O$ of $\mu_{0}$ such that for $\mu\in O$,  \eqref{eqC} has a pair of complex simple conjugate eigenvalues $\alpha(\mu)\pm \mathrm{i}\omega(\mu)(mod(n_1))$ and a simple real eigenvalue $\gamma(\mu)(mod(n_2))$. They all continuously differentiable in $\mu=(\mu_1,\mu_2)$, and satisfy $\alpha(\mu_{0})=0,~\omega(\mu_{0})=\omega_{0}>0,~ \frac{\partial}{\partial \mu_1}\alpha(\mu_{0})\neq 0$, $\gamma(\mu_{0})=0,~\frac{\partial}{\partial \mu_2}\gamma(\mu_{0})\neq 0 ~.$ In addition, all other eigenvalues have non-zero real part.

%In order to study the qualitative behavior near the Hopf-zero bifurcation, we introduce $\alpha=\mu-\mu_0$ as a function in $\tilde{C}:=C([-1,0],\mathbb{R}^{2})$, which satisfies $\frac{d}{dt}{\alpha}(t)=0$. We drop the auxiliary equation in the following, an equivalent system of \eqref{eqB} obtained
%\begin{equation}\label{eqE}
%\frac{d}{dt}U(t)=D_0\Delta U(t)+L_{0}(U^{t})+F_{0}(\alpha,U^{t}),
%\end{equation}
%with $D_0=D(\mu_{0})$, $L_{0}:\mathcal{C}\rightarrow X_{\mathbb{C}}$ is a linear operator given by
%\begin{equation}\label{L0}
%L_0(\varphi)=L(\mu_{0})(\varphi),
%\end{equation}
%and $F_0:\mathbb{R}^2\times\mathcal{C}\rightarrow X_{\mathbb{C}}$ is a nonlinear operator given by
%\begin{equation}\label{F0}
%F_{0}(\alpha,\varphi)=[D(\alpha+\mu_{0})-D_0]\Delta \varphi(0)+[L(\alpha+\mu_{0})-L_{0}](\varphi)+F(\alpha+\mu_{0},\varphi).
%\end{equation}
%Here $L,F$ are the operators defined by \eqref{eqB}.

\subsection{Decomposition of the phase space}
In order to study the qualitative behavior near the Hopf-zero bifurcation, 
we introducing a new phase space  $\mathcal{BC}$ with  $$\mathcal{BC}:=\{\psi:[-1,0]\rightarrow X_{\mathbb{C}} : \psi~\mathrm{is~ continuous~ on}~[-1,0),~\exists \lim_{\theta\rightarrow0^{-}}\psi(\theta)\in X_{\mathbb{C}}\}.$$
Let $\alpha=\mu-\mu_0$ as a function in ${C}:=C([-1,0],\mathbb{C}^{2})$, which satisfies $\frac{d}{dt}{\alpha}(t)=0$. 
Then, \eqref{A} is equivalent to an abstract ordinary differential equation (ODE) on $\mathcal{BC}$:
\begin{equation}\label{eqF}
\frac{d}{dt}U^{t}=AU^{t}+X_{0}F_{0}(\alpha,U^{t}).
\end{equation}
Here $A$ is a operator form $~\mathcal{C}_{0}^{1}=\{\varphi\in\mathcal{C}:\dot{\varphi}\in\mathcal{C},~\varphi(0)\in dom(\Delta)\}$ to $\mathcal{BC}$, defined by 
$$A\varphi=\dot{{\varphi}}+X_{0}[D_0\Delta\varphi(0)+L_{0}(\varphi)-\dot{{\varphi}}(0)],$$
with  $D_0=D(\mu_{0})$, $L_{0}:\mathcal{C}\rightarrow X_{\mathbb{C}}$ is a linear operator given by
$L_0(\varphi)=L(\mu_{0})(\varphi),$ and
%\begin{equation*}\label{L0}
%L_0(\varphi)=L(\mu_{0})(\varphi),
%\end{equation*}
%and $X_{0}$ is given by
$$X_{0}(\theta)=\left\{\begin{array}{cc}
0,&~-1\leq\theta<0,\\
I,&~\theta=0.
\end{array}\right.$$
And $F_0:\mathbb{R}^2\times\mathcal{C}\rightarrow X_{\mathbb{C}}$ is a nonlinear operator and defined by
\begin{equation}\label{F0}
F_{0}(\alpha,\varphi)=[D(\alpha+\mu_{0})-D_0]\Delta \varphi(0)+[L(\alpha+\mu_{0})-L_{0}](\varphi)+F(\alpha+\mu_{0},\varphi).
\end{equation}

We adopt the technology introduced in \cite{Wu1996Theory} for the decomposition of $\mathcal{C}$ and the methods given in \cite{Faria2000Normal} to complete the decomposition of $\mathcal{BC}$.
Let $\{\phi_{1}(\theta)\beta_{n_{1}}, \phi_{2}(\theta)\beta_{n_{2}}\}$ and $\{\psi_{1}(s)\beta_{n_{1}},\psi_{2}(s)\beta_{n_{2}}\}$ are the eigenfunctions of $A$ and its dual $A^*$ relative to $\{\mathrm{i}\omega_{0},0\}$, respectively. Which satisfy $\phi_{1},\phi_{2}\in C$, 
$\psi_{1},\psi_{2}\in C^*:=C([0,1],\mathbb{C}^2)$, and
\begin{equation*}
(\psi_1,\phi_{1})_1=1,\qquad (\psi_1,\bar{\phi}_{1})_1=0,  \qquad (\psi_2,\phi_{2})_2=1.
\end{equation*}
Here $(\cdot,\cdot)_{k}$ are the adjoint bilinear form on $C^{*}\times C$, 
$$(\alpha,\beta)_{k}=\alpha(0)\beta(0)-\int_{-1}^{0}\int_{0}^{\theta}\alpha(\xi-\theta)d\eta_{k}(\theta)\beta(\xi)d\xi,\quad k=1,2, $$
and  $\eta_{k}\in BV([-1,0],\mathbb{C}^{2\times2})$ are given by
$$-\frac{n_{k}^{2}}{l^{2}}D_0\varphi(0)+L_{0}(\varphi)=\int_{-1}^{0} d\eta_{k}(\theta)\varphi(\theta),~~~~~\quad \varphi \in C, \;k=1,2.$$
For simplicity, we denote $\Phi_1(\theta)=(\phi_{1}(\theta),~~\bar{\phi}_1(\theta)),$ $\Psi_1(s)=\left(\psi_{1}(s)^\mathrm{T},\bar{\psi}_1(s)^\mathrm{T}\right)^\mathrm{T}$ and $\Phi_2(\theta)=\phi_{2}(\theta)$,  $\Psi_2(s)=\psi_2(s)$.
%\begin{equation*}\label{Phi-Psi}
%\begin{aligned}
%\Phi_1(\theta)&=(\phi_{1}(\theta),~~\bar{\phi}_1(\theta)),\hspace{1.75cm}
%\Phi_2(\theta)=\phi_{2}(\theta),\\
%\Psi_1(s)&=\left(\psi_{1}(s)^\mathrm{T},\bar{\psi}_1(s)^\mathrm{T}\right)^\mathrm{T},\hspace{1.cm}
%\Psi_2(s)=\psi_2(s).
%\end{aligned}
%\end{equation*}

Now $\mathcal{BC}$ can be divided into a direct sum of center subspace and its complementary space:
\begin{equation}\label{eqBC}
\mathcal{BC}=\mathcal{P}\oplus \mathrm{ker} \pi,
\end{equation}
where $\pi:\mathcal{BC}\rightarrow\mathcal{P}$ is the projection defined by
\begin{equation}\label{pi}
\pi \varphi=\sum^{2}_{k=1}\Phi_{k}(\Psi_{k},\langle \varphi(\cdot),\beta_{n_{k}}\rangle)_{k}\beta_{n_{k}}.
\end{equation}
Then $U^{t}\in\mathcal{C}_{0}^{1}$ can be divided into 
\begin{equation}\label{U^{t}}
\begin{split}
U^{t}(\theta)=&\sum^{2}_{k=1}\Phi_{k}(\theta)\tilde{z}_{k}(t)\beta_{n_{k}}+y(\theta),\\
\end{split}
\end{equation}
with  $\tilde{z}_{k}(t)=(\Psi_{k},\langle U^{t},\beta_{n_{k}}\rangle)_{k}$ and $ y\in {\mathcal{C}_{0}^{1}}\cap\mathrm{ker}\pi :={\mathcal{Q}^{1}}.$
Since $\pi$ can commute with $A$ in $\mathcal{C}_{0}^{1}$, the system \eqref{eqF} on $\mathcal{BC}$ can be identified as a system on $\mathbb{C}^{3}\times\mathrm{ker}\pi$
\begin{equation}\label{zy}
\begin{aligned}
\dot{z}=&Bz+\Psi(0)
\left(\begin{array}{c}
\langle F_{0}(\alpha,\sum^{2}_{k=1}\Phi_{k}\tilde{z}_{k}\beta_{n_{k}}+y),\beta_{n_{1}}\rangle\\
\langle F_{0}(\alpha,\sum^{2}_{k=1}\Phi_{k}\tilde{z}_{k}\beta_{n_{k}}+y),\beta_{n_{2}}\rangle
\end{array}
\right),\\
\frac{d}{dt}y&=A_{1}y+(I-\pi)X_{0}F_{0}(\alpha,\sum^{2}_{k=1}\Phi_{k}\tilde{z}_{k}\beta_{n_{k}}+y),
\end{aligned}
\end{equation}
where $z=(\tilde{z}_{1},\tilde{z}_{2}):=(z_{1},\bar{z}_{1},z_{2})$, $B=\mathrm{diag}~(i\omega_{0},-i\omega_{0},0)$, $\Psi=\mathrm{diag}~(\Psi_{1},\Psi_{2}),$ and $A_{1}$ is defined by $A_{1}:{\mathcal{Q}^{1}}\subset\mathrm{ker}\pi\rightarrow\mathrm{ker}\pi,~A_{1}\varphi=A\varphi~\mathrm{for}~ \varphi\in\mathcal{Q}^{1}.$

\subsection{Reduction and  the normal forms for Hopf-zero bifurcation}
Consider the formal Taylor expansion 
\begin{equation*}
F_{0}(\alpha,\varphi)=\sum_{j=2}^{\infty} \frac{1}{j!}F_{0}^{(j)}(\alpha,\varphi),\qquad\alpha \in {C},~\varphi\in \mathcal{C},
\end{equation*}
with $F_{0}^{(j)}(\cdot,\cdot)\in L^{j}(\tilde{C}\times\mathcal{C},X_{\mathbb{C}})$ is the $j$th $\mathrm{Fr\acute{e}chet}$ derivation of $F_{0}(\cdot,\cdot).$  Then, \eqref{zy} can be written as
\begin{equation}\label{eq14}
\begin{array}{c}
\begin{aligned}
\dot{z}&=Bz+\sum_{j=2}^{\infty} \frac{1}{j!}f_{j}^{1}(z,y,\alpha),\\
\frac{d}{dt}y&=A _{1}y+\sum_{j=2}^{\infty} \frac{1}{j!}f_{j}^{2}(z,y,\alpha).
\end{aligned}
\end{array}
\end{equation}
Here $f_{j}=(f_{j}^{1},f_{j}^{2}),j\geq2$ are defined by
\begin{equation*}\label{f^i}
\begin{aligned}
&f_{j}^{1}(z,y,\alpha)=\Psi(0)
\left(\begin{array}{c}
\langle F_{0}^{(j)}(z,y,\alpha),\beta_{n_{1}}\rangle\\
\langle F_{0}^{(j)}z,y,\alpha),\beta_{n_{2}}\rangle
\end{array}
\right),\\
&f_{j}^{2}(z,y,\alpha)=(I-\pi)X_{0}F_{0}^{(j)}(z,y,\alpha),
\end{aligned}
\end{equation*}
with $F_0^{(j)}(z,y,\alpha):=F_0^{(j)}(\alpha,\sum^{2}_{k=1}\Phi_{k}\tilde{z}_{k}\beta_{n_{k}}+y)$. 

In fact,  the calculation of the normal forms can be considered as a process to eliminate the non-resonant terms in equation \eqref{eq14}.

First of all, we introduce the following notation: for a normed space $Y$, denote $V_{j}^{m+p}(Y)$ the linear space of homogeneous polynomials of degree $j$ in $m+p$ variables $z=(z_{1},\cdots,z_{m}),~\alpha=(\alpha_{1},\cdots,\alpha_{p})$ with coefficients in $Y$,
$$V_{j}^{m+p}(Y)=\left\{\sum_{|(q,l)|=j}c_{(q,l)}z^{q}\alpha^{l}~:~(q,l)\in N_{0}^{m+p},~c_{(q,l)}\in Y \right\}$$
and the norm $|\sum_{|(q,l)|=j}c_{(q,l)}z^{q}\alpha^{l}|=\sum_{|(q,l)|=j}|c_{(q,l)}|_{Y}.$ Define the operator $M_{j}=(M_{j}^{1},M_{j}^{2})$, $j\geq2$ with
\begin{equation}
\begin{aligned}
&M_{j}^{1}:V_{j}^{3+2}(\mathbb{C}^{3})\rightarrow V_{j}^{3+2}(\mathbb{C}^{3})\\
&(M_{j}^{1}p)(z,\alpha)=D_{z}p(z,\alpha)Bz-Bp(z,\alpha)\\
&M_{j}^{2}:V_{j}^{3+2}(\mathcal{Q}_{1})\subset V_{j}^{3+2}(\mathrm{ker}\pi)\rightarrow V_{j}^{3+2}(\mathrm{ker}\pi)\\
&(M_{j}^{2}h)(z,\alpha)=D_{z}h(z,\alpha)Bz-A_{1}h(z,\alpha)
\end{aligned}
\end{equation}
Since $M_{j}$ restricted on $\mathrm{Ker}(M_{j}^{1})^{c}\times\mathrm{Ker}(M_{j}^{2})^{c}$ to $\mathrm{Im}(M_{j}^{1})\times \mathrm{Im}(M_{j}^{2})$ is bijection,  the corresponding inverse operator is well defined and we denote it by $(M_{j})^{-1}$.

\subsubsection{The normal forms up to the second order }
For the second order terms,  ignore the effects of the nonlinear terms of the perturbation parameters, we choose
$\mathrm{Im}(M_{2}^{1})^{\mathrm{c}}$ spanned by
\begin{equation}\label{ImM_2^1^c}
z_{1}z_2e_{1},z_{1}\alpha_{i}e_{1},{\bar{z}_1}z_2e_{2},{\bar{z}_1}\alpha_{i}e_{2},z_{1}{\bar{z}_1}e_{3},z_{2}^{2}e_{3},z_{2}\alpha_{i}e_{3},\quad  i=1,2,
\end{equation}
with $e_{1}=(1,0,0)^{\mathrm{T}},e_{2}=(0,1,0)^{\mathrm{T}},e_{3}=(0,0,1)^{\mathrm{T}}$.
And $\mathrm{Ker} (M_{2}^{1})^{\mathrm{c}}$ spanned by the rest of elements.

Taking the coordinate transformation
\begin{equation*}
(z,y)\rightarrow({z},y)+\frac{1}{2!}(U_{2}^{1}({z},{\alpha}),U_{2}^{2}({z},{\alpha}))
\end{equation*}
with  $U_{2}=(U_{2}^{1},U_{2}^{2})\in V_{2}^{3+2}(\mathbb{C}^{3})\times V_{2}^{3+2}(\mathcal{Q}_{1})$ and given by
\begin{equation*}
U_{2}(z,\alpha)=(M_{2})^{-1}\mathbf{P}_{\mathrm{Im}(M_{j}^{1})\times \mathrm{Im}(M_{2}^{2})}\comp{{f}_{2}(z,0,\alpha)}
\end{equation*}
Since the non-resonance conditions relative to $\Lambda=\{\pm i\omega_{0},0 \}$ is satisfied, the normal forms of \eqref{A} on the center manifold of the origin near $(\alpha_1,\alpha_2)=0$ has the form
\begin{equation}
\dot{z}=Bz+\frac{1}{2!}g_{2}^{1}(z,0,\alpha)+h.o.t.
\end{equation}
with $g_{2}^{1}(z,0,\alpha)=\mathbf{P}_{\mathrm{Im}(M_{2}^{1})^{\mathrm{c}}}\comp f_{2}^{1}(z,0,\alpha).$
\subsubsection{The normal forms up to the third order}
After the computation of the second order normal forms, the new third order term now has the form $\bar{f}_{3}^{1}(z,0,0)=f_{3}^{1}(z,0,0)+\frac{3}{2}[D_{z}f_{2}^{1}(z,0,0)U_{2}^{1}(z,0)+D_{y}f_{2}^{1}(z,0,0)U_{2}^{2}(z,0)-D_{z}U_{2}^{1}(z,0)g_{2}^{1}(z,0,0)]$. 

We ignore the effects of the perturbation parameters, and choose $\mathrm{Im}(M_{3}^{1})^{\mathrm{c}}$ generated by
\begin{equation}\label{ImM_3^1^c}
\{z_{1}^{2}{\bar{z}_1}e_{1},z_{1}z_{2}^{2}e_{1},z_{1}{\bar{z}_1}^{2}e_{2},{\bar{z}_1}z_{2}^{2}e_{2},z_{1}{\bar{z}_1}z_2e_{3},z_{2}^{3}e_{3}\},
\end{equation}
Then the normal forms up to the third order is given by
\begin{equation}\label{normalform3}
\dot{z}=Bz+\frac{1}{2!}g_{2}^{1}(z,0,\alpha)+\frac{1}{3!}g_{3}^{1}(z,0,0)+h.o.t.
\end{equation}
with $g_{3}^{1}(z,0,0)=\mathbf{P}_{\mathrm{Im}(M_{3}^{1})^{\mathrm{c}}}\comp \bar{f}_{3}^{1}(z,0,0).$
\subsubsection{An equivalence condition of $U_2^2(z,0)\in V_{2}^{3}(\mathcal{Q}_{1})$ }
For the new third order term $\bar{f}_{3}^{1}(z,0,0)$, it requires us to calculate $(1)\;U_{2}^{1}(z,0)\in V_{2}^{3+2}(\mathbb{C}^{3})$, $(2)\;U_{2}^{2}(z,0)\in V_{2}^{3}(\mathcal{Q}_{1})$,
$(3)\;D_{z}f_{2}^{1}(z,0,0):\mathbb{C}^{3}\rightarrow \mathcal{BC}$,
$(4)\;D_{y}f_{2}^{1}(z,0,0):Q_{1}\rightarrow \mathcal{BC}$, first.
Among them, to get $U_{2}^{2}(z,0)$  is always a hard problem.

For convenience, we denote $U_2^2(z,0)$ by $U_2^2(\theta)$ and write it into the following form
\begin{equation*}
U_2^2(\theta)\! =\! h_{200}(\theta)z_1^2\!+\!h_{110}(\theta)z_1{\bar{z}_1}\!+\!h_{101}(\theta)z_1 z_{2}\!+\!h_{020}(\theta)\bar{z}_1^2\!+\!h_{011}(\theta){\bar{z}_1}z_{2}\!+\!h_{002}(\theta)z_{2}^2,
\end{equation*}
with $h_{200},h_{110},h_{101},h_{020},h_{011},h_{002}$
%\in\mathcal{Q}_{1}$ 
are undetermined. 
In general, $U_2^2(\theta)$ must fulfil three conditions:
\begin{enumerate}[(1)]
	\item  When $\theta \neq 0$,
	\begin{equation}\label{thetaneq0}
	D_z U_2^2(\theta) Bz-D_{\theta} U_2^2(\theta)=-\pi X_0 F_0^{(2)}(z,0,0).
	\end{equation}
	\item When $\theta = 0$, 	
	\begin{equation}\label{theta=0}
	D_z U_2^2(0) Bz-D_0 \Delta U_2^2(0) - L_0(U_2^2)
	= (I-\pi) X_0 F_0^{(2)}(z,0,0).
	\end{equation}
	\item $U_{2}^{2}(\theta)\in V_{2}^{3}(\mathcal{Q}_{1})$.
\end{enumerate} 
We claim that the condition (3) can be replaced by a weaker one.
\begin{theorem}\label{U22} Assume $U_2^2(\theta)$ is one of the solutions of \eqref{thetaneq0} and \eqref{theta=0}. Then  $U_2^2\in V_{2}^{3}(\mathcal{Q}_{1})$ is equivalent to
	\begin{equation}\label{U22=}
	\begin{array}{cc}
	\left( \psi_1, {\langle h_{101}(\cdot),\beta_{n_{1}}\rangle} \right)_1 = 0, &\left( \overline{\psi}_1, {\langle h_{011}(\cdot),\beta_{n_{1}}\rangle} \right)_1 = 0,\\\left( \psi_2, {\langle h_{110}(\cdot),\beta_{n_{2}}\rangle} \right)_2=0,&
	\left( \psi_2, {\langle h_{002}(\cdot),\beta_{n_{2}}\rangle} \right)_2=0.
	\end{array}
	\end{equation}
\end{theorem}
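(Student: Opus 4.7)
The plan is to convert the definition of $V_2^3(\mathcal{Q}_1)$ into scalar projection conditions on the six coefficient functions $h_{jkl}(\theta)$, and then use the adjoint bilinear forms $(\cdot,\cdot)_k$ together with the spectral property of $\Psi_k$ to reduce the pair of identities \eqref{thetaneq0}--\eqref{theta=0} to a homogeneous first-order linear PDE in $z$ whose degree-two polynomial solutions expose exactly four resonant monomials. The forward direction is immediate: by the formula \eqref{pi}, membership of each $h_{jkl}$ in $\ker\pi$ is equivalent to the vanishing of three scalar projections, giving eighteen scalar conditions in total, which in particular imply \eqref{U22=}. All the content lies in the converse.

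For the converse I first project \eqref{thetaneq0}--\eqref{theta=0} onto the spatial mode $\beta_{n_k}$ by applying $\langle\cdot,\beta_{n_k}\rangle$, introducing $H^{(k)}(\theta) := \langle U_2^2(\theta),\beta_{n_k}\rangle$ and $G_k := \langle F_0^{(2)}(z,0,0),\beta_{n_k}\rangle$. Using the identity $-\tfrac{n_k^2}{l^2}D_0\varphi(0)+L_0(\varphi) = \int d\eta_k\,\varphi$ stated in the excerpt, the two pointwise equations combine into a single identity on $C([-1,0],\mathbb{C}^2)$ of the form
\begin{equation*}
D_z H^{(k)}(\theta)Bz - \mathfrak{A}_k H^{(k)}(\theta) = -\Phi_k(\theta)\Psi_k(0)G_k + X_0(\theta)G_k,
\end{equation*}
where $\mathfrak{A}_k$ is the infinitesimal generator on $C([-1,0],\mathbb{C}^2)$ associated with the kernel $\eta_k$. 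Then I apply the adjoint bilinear form $(\Psi_k,\cdot)_k$ and set $Q^{(k)}(z) := (\Psi_k,H^{(k)})_k$. The crucial algebraic inputs are (i) the spectral identity $(\Psi_k,\mathfrak{A}_k\varphi)_k = B_k(\Psi_k,\varphi)_k$, coming from $\Psi_k$ being a matrix eigenfunction of the adjoint operator; (ii) the normalization $(\Psi_k,\Phi_k)_k = I$ (which reads $I_{2\times 2}$ for $k=1$ thanks to $(\psi_1,\phi_1)_1=1$, $(\psi_1,\bar\phi_1)_1=0$ and their conjugates); and (iii) the standard extension $(\Psi_k,X_0 c)_k = \Psi_k(0)c$ of the bilinear form to the atomic part of $\mathcal{BC}$. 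Together they force the two inhomogeneous terms on the right to cancel, leaving the homogeneous equation
\begin{equation*}
D_z Q^{(k)}(z) Bz - B_k Q^{(k)}(z) = 0.
\end{equation*}

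To finish I classify the degree-two polynomial solutions of this equation. The Euler-type operator $Q\mapsto D_z Q\cdot Bz$ sends the monomial $z_1^{q_1}\bar z_1^{q_2} z_2^{q_3}$ to $\mathrm{i}\omega_0(q_1-q_2)$ times itself, so matching against the diagonal entries of $B_1$ (namely $\pm\mathrm{i}\omega_0$) and $B_2$ (namely $0$) picks out the admissible resonant indices. The only surviving degree-two monomials are $z_1 z_2$ and $\bar z_1 z_2$ in the two rows of $Q^{(1)}$, and $z_1\bar z_1$ and $z_2^2$ in $Q^{(2)}$; their coefficients are precisely the four scalars appearing in \eqref{U22=}. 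Setting those four coefficients to zero forces $Q^{(k)}\equiv 0$ for both $k$, which in turn forces the remaining fourteen projection identities automatically, giving $h_{jkl}\in\mathcal{Q}^1$ for all six coefficients and hence $U_2^2\in V_2^3(\mathcal{Q}_1)$.

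The main obstacle will be the cancellation step on the right-hand side of the $H^{(k)}$-equation after $(\Psi_k,\cdot)_k$ is applied: it requires careful handling of the $X_0$-atomic contribution at $\theta=0$ and the correct assembly of the scalar normalizations of $\psi_j,\phi_j$ into the matrix identity $(\Psi_k,\Phi_k)_k=I$. Once that algebraic collapse is secured, the rest is a direct coefficient match on a first-order linear PDE in $z$.
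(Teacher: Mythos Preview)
Your proposal is correct and follows essentially the same route as the paper. The paper projects onto the spatial mode $\beta_{n_k}$, then evaluates $(\psi_j,\langle U_2^2(\cdot),\beta_{n_k}\rangle)_k$ by explicit integration by parts on the bilinear-form integral, arriving at the self-referential relation $\mathrm{i}\omega_0\,P = D_zP\cdot Bz$ (resp.\ $0 = D_zP\cdot Bz$ for $k=2$) and reading off the single surviving resonant monomial in each case; you obtain the identical homogeneous equation $D_zQ^{(k)}Bz - B_kQ^{(k)}=0$ by applying $(\Psi_k,\cdot)_k$ to the combined generator equation and invoking the spectral identity $(\Psi_k,\mathfrak{A}_k\varphi)_k=B_k(\Psi_k,\varphi)_k$ together with the cancellation $(\Psi_k,X_0c)_k-(\Psi_k,\Phi_k)_k\Psi_k(0)c=0$. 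The only cosmetic difference is that the paper works scalarwise with $\psi_1,\bar\psi_1,\psi_2$ and carries out the integration by parts by hand, whereas you package the same computation in matrix form via $\Psi_k$ and the abstract adjoint relation.
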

\begin{proof}
	First of all, we claim that $U_2^2\in V_{2}^{3}(\mathcal{Q}_{1})$ means
	\begin{equation*}
	\left( \psi_1, {\langle U_2^2(\cdot),\beta_{n_{1}}\rangle} \right)_1 = 0,\quad
	\left( \overline{\psi}_1, {\langle U_2^2(\cdot),\beta_{n_{1}}\rangle} \right)_1 = 0, \quad
	\left( \psi_2, {\langle U_2^2(\cdot),\beta_{n_{2}}\rangle} \right)_2 = 0.
	\end{equation*}
	%	And 
	%	\begin{align*}
	%	&D_zU_2^2(\theta)Bz \!=\! 2\mathrm{i}\omega_0 h_{200}(\theta)z_1^2\!+\!\mathrm{i}\omega_0 h_{101}(\theta)z_1z_{2}\!-\!2\mathrm{i}\omega_0 h_{020}(\theta)\bar{z}_1^2\!-\!\mathrm{i}\omega_0 h_{011}  (\theta){\bar{z}_1}z_{2}.
	%	\end{align*}	
	
	Taking the inner product to the both sides of \eqref{thetaneq0}-\eqref{theta=0} with $\beta_{n_{1}}$, and using 
	%	\begin{equation}\label{U2201}
	%	\begin{aligned}
	%	D_z{\langle U_2^2(\theta),\beta_{n_{1}}\rangle}Bz - D_{\theta}{\langle U_2^2(\theta),\beta_{n_{1}}\rangle}=  - \Phi_1(\theta)\Psi_1(0){\langle F_0^{(2)}(z,0,0),\beta_{n_{1}}\rangle},\\
	%	\end{aligned}
	%	\end{equation}
	integration by parts, we can obtain that
	\begin{align*}
	&\left( \psi_1, {\langle U_2^2(\cdot),\beta_{n_{1}}\rangle} \right)_1\\
	=&\psi_1(0){\langle U_2^2(0),\beta_{n_{1}}\rangle}-\int_{-r}^{0}\int_{0}^{\theta}\psi_1(0)e^{-\mathrm{i}\omega_0(\xi-\theta)}d\eta_1(\theta){\langle U_2^2(\xi),\beta_{n_{1}}\rangle}d\xi\\
	=&\psi_1(0){\langle U_2^2(0),\beta_{n_{1}}\rangle}\!-\!\frac{1}{\mathrm{i}\omega_0}\int_{-r}^{0}\psi_1(0)d\eta_1(\theta)[{\langle U_2^2(0),\beta_{n_{1}}\rangle}e^{\mathrm{i}\omega_0\theta}\!-\!{\langle U_2^2(\cdot),\beta_{n_{1}}\rangle}]\\
	&~~~-\frac{1}{\mathrm{i}\omega_0}\int_{-r}^{0}\int_{0}^{\theta}\psi_1(\xi-\theta)d\eta_1(\theta)[D_z{\langle U_2^2(\xi),\beta_{n_{1}}\rangle}Bz\!+\!\Phi_1(\xi)\Psi_1(0){\langle F_0^{(2)},\beta_{n_{1}}\rangle}]d\xi\\
	=&\frac{1}{\mathrm{i}\omega_0}\!\left\{\!\psi_1(0)\!\left[\mathrm{i}\omega_0I_d\!+\!\frac{n_1^2}{l^2}D_0\!-\!L_0(e^{\mathrm{i}\omega_0\theta}\cdot I_d)\right]\!{\langle U_2^2(0),\beta_{n_{1}}\rangle}\!+\!D_z\left( \psi_1, {\langle U_2^2(\cdot),\beta_{n_{1}}\rangle}\! \right)_1Bz\right\}\\
	=&\left( \psi_1, {\langle h_{101}(\cdot),\beta_{n_{1}}\rangle} \right)_1.
	\end{align*}
	Which complete 	$\left( \psi_1, {\langle U_2^2(\cdot),\beta_{n_{1}}\rangle} \right)_1 = 0~ 
	\Longleftrightarrow~
	\left( \psi_1, {\langle h_{101}(\cdot),\beta_{n_{1}}\rangle} \right)_1 = 0.$
	%\begin{equation*}
	%\left( \psi_1, {\langle U_2^2(\cdot),\beta_{n_{1}}\rangle} \right)_1 = 0~ 
	%\Longleftrightarrow~
	%\left( \psi_1, {\langle h_{101}(\cdot),\beta_{n_{1}}\rangle} \right)_1 = 0.
	%\end{equation*}	
	Moreover, this also means that under the condition of Theorem \ref{U22}, the establishment of the following equation is natural and does not require any additional conditions.
	\begin{align*}
	&\left( \psi_1, {\langle h_{200}(\cdot),\beta_{n_{1}}\rangle} \right)_1=0,\quad
	\left( \psi_1, {\langle h_{110}(\cdot),\beta_{n_{1}}\rangle} \right)_1=0,\quad
	\left( \psi_1, {\langle h_{020}(\cdot),\beta_{n_{1}}\rangle} \right)_1=0,\\
	&\left( \psi_1, {\langle h_{011}(\cdot),\beta_{n_{1}}\rangle} \right)_1=0,\quad
	\left( \psi_1, {\langle h_{002}(\cdot),\beta_{n_{1}}\rangle} \right)_1=0.
	\end{align*}
	In a similar manner, we can get $\left( \overline{\psi}_1, {\langle U_2^2(\cdot),\beta_{n_{1}}\rangle} \right)_1 \!=\! 0~\Longleftrightarrow~
	\left( \overline{\psi}_1, {\langle h_{011}(\cdot),\beta_{n_{1}}\rangle} \right)_1 \!=\! 0$.
	
	Next, taking the inner product to the both sides of  \eqref{thetaneq0}-\eqref{theta=0} with $\beta_{n_{2}}$, then
	\begin{equation*}
	\begin{aligned}
	&\left( \psi_2, D_z{\langle U_2^2(\cdot),\beta_{n_{2}}\rangle}Bz \right)_2\\
	=&\psi_2(0)\!\left\{\!\!D_z{\langle U_2^2(0),\beta_{n_{2}}\rangle}\!Bz\!\!-\!\!\!\int_{\!-\!r}^{0}\!\!\int_{0}^{\theta}d\eta_2(\theta)[D_{\theta} {\langle U_2^2(\xi),\beta_{n_{2}}\rangle}\!\!-\!\!\Phi_2(\xi)\Psi_{2}(0){\langle F_0^{(2)},\beta_{n_{2}}\rangle}]d\xi\!\!\right\}\\ 
	%	&=\psi_2(0)D_z{\langle U_2^2(0),\beta_{n_{2}}\rangle}Bz-\int_{-r}^{0}\psi_2(0)d\eta_1(\theta)[{\langle U_2^2(\cdot),\beta_{n_{2}}\rangle}-{\langle U_2^2(0),\beta_{n_{2}}\rangle}]\\
	%	&~~~+\int_{-r}^{0}\int_{0}^{\theta}\psi_2(0)d\eta_1(\theta)\phi_2(\xi) f_2^{1,3}d\xi\\ 
	=&\psi_2(0)\left[-\frac{{n_{2}}^2}{l^2}D_0+L_0(I_d)\right] {\langle U_2^2(0),\beta_{n_{2}}\rangle}\\=&0.
	\end{aligned}
	\end{equation*}
	Which means
	\begin{align*}
	&\left( \psi_2, {\langle h_{200}(\cdot),\beta_{n_{2}}\rangle} \right)_2=0,
	&\left( \psi_2, {\langle h_{101}(\cdot),\beta_{n_{2}}\rangle} \right)_2=0,\\
	&\left( \psi_2, {\langle h_{020}(\cdot),\beta_{n_{2}}\rangle} \right)_2=0,
	&\left( \psi_2, {\langle h_{011}(\cdot),\beta_{n_{2}}\rangle} \right)_2=0.
	\end{align*}
	and 
	\begin{equation*}
	\left( \psi_2, {\langle U_2^2(\cdot),\beta_{n_{2}}\rangle} \right)_2= 0~ 
	\Longleftrightarrow~\left\{
	\begin{aligned}
	&\left( \psi_2, {\langle h_{110}(\cdot),\beta_{n_{2}}\rangle} \right)_2=0\\
	%&~~~~~~~~~~~~~~\&\\
	&\left( \psi_2, {\langle h_{002}(\cdot),\beta_{n_{2}}\rangle} \right)_2=0.
	\end{aligned}\right.
	\end{equation*}
	The proof is completed.
\end{proof}
\begin{remark}
	Theorem \ref{U22} shows that, a function which satisfy \eqref{thetaneq0}-\eqref{theta=0}, if it also meet \eqref{U22=}, then it is the function $U_2^2(z,0)\in V_{2}^{3}(\mathcal{Q}_{1})$ that we required.
\end{remark}
%Up to here, we have given the complete algorithm of calculating the normal forms for PFDEs with a Hopf-zero bifurcation singularity. And the condition $U_2^2\in V_{2}^{3}(\mathcal{Q}_{1})$ has been weakened.
%

\section{Normal forms for Turing-Hopf bifurcation}

In this section, we consider a types of Turing-Hopf bifurcation which satisfies $n_1=0$, $n_2\neq 0$ in $(\textbf{H}1)$. In fact, it is the most common situation in application. The explicit formulas of $g_2^1(z,0,\alpha)$ and $g_3^1(z,0,0)$ in the normal forms \eqref{normalform3} are given.

We assume that for some $\mu_0=(\mu_{0,1},\mu_{0,2})\in\mathbb{R}^2$, the following  condition is holds:
$(\textbf{H}2)$ There exists a neighborhood $O$ of $\mu_{0}$ such that for $\mu\in O$,  \eqref{eqC} has a pair of complex simple conjugate eigenvalues $\alpha(\mu)\pm \mathrm{i}\omega(\mu)(mod (0) )$ and a simple real eigenvalue $\gamma(\mu)(mod(n_2\neq0))$. They all continuously differentiable in $\mu=(\mu_1,\mu_2)$, and satisfy $\alpha(\mu_{0})=0,~\omega(\mu_{0})=\omega_{0}>0,~ \frac{\partial}{\partial \mu_1}\alpha(\mu_{0})\neq 0$, $\gamma(\mu_{0})=0,~\frac{\partial}{\partial \mu_2}\gamma(\mu_{0})\neq 0 ~.$ In addition, all other eigenvalues have non-zero real part.

For convenience, we denote
%\begin{equation}
%\begin{array}{cc}
%	A_{11}(\mu)=f_{u}(\mu,0,0),&A_{12}(\mu)=f_{v}(\mu,0,0),\\
%	A_{21}(\mu)=g_{u}(\mu,0,0),&A_{22}(\mu)=g_{v}(\mu,0,0),\\
%	B_{11}(\mu)=f_{u_{\tau}}(\mu,0,0),&B_{12}(\mu)=f_{v_{\tau}}(\mu,0,0),\\
%	B_{21}(\mu)=g_{u_{\tau}}(\mu,0,0),&B_{22}(\mu)=g_{v_{\tau}}(\mu,0,0).
%\end{array}
%\end{equation} Let
\begin{equation*}
\begin{aligned}
A(\mu) :=& \left[
\begin{array}{cc}
f_{u}(\mu,0,0)&f_{v}(\mu,0,0)\\g_{u}(\mu,0,0)&g_{v}(\mu,0,0)
\end{array}\right],~~~
B(\mu) := \left[
\begin{array}{cc}
f_{u_{\tau}}(\mu,0,0)&f_{v_{\tau}}(\mu,0,0)\\g_{u_{\tau}}(\mu,0,0)&g_{v_{\tau}}(\mu,0,0)
\end{array}\right],
\end{aligned}
\end{equation*}
with $u,v,u_{\tau},v_{\tau}$ are denote as the simplified form of $u(t),v(t),u(t-1),v(t-1)$ in \eqref{A}, respectively. Here $f_{u}(\mu,0,0)=\frac{\partial}{\partial u}f(\mu,0,0)$, and so forth. 
Further assume
\begin{equation*}
\begin{aligned}
F_0^{(2)}(z,y,\alpha)=&\sum\limits_{i=1}^{2}\left[F_{y_i(\theta)z_1}y_i(\theta)z_1\beta_{n_{1}}+F_{y_i(\theta)\bar{z}_1}y_i(\theta)\bar{z}_1\beta_{n_{1}}+F_{y_i(\theta)z_2}y_i(\theta)z_2\beta_{n_{2}}\right]\\& 
+\sum\limits_{i=1}^{2}\left(F_{\alpha_iz_1}\alpha_iz_1\beta_{n_{1}}+F_{\alpha_i\bar{z}_1}\alpha_i\bar{z}_1\beta_{n_{1}}+F_{\alpha_iz_2}\alpha_iz_2\beta_{n_{2}}\right)\\&+\sum\limits_{m+n+k=2}F_{mnk}z_1^m\bar{z}_1^nz_2^k\beta_{n_{1}}^{m+n}\beta_{n_{2}}^k+O(\alpha y,y^2),\\
F_0^{(3)}(z,0,0)=&\sum\limits_{m+n+k=3}F_{mnk}z_1^m\bar{z}_1^nz_2^k\beta_{n_{1}}^{m+n}\beta_{n_{2}}^k.
\end{aligned}
\end{equation*}
Here the formulas for these coefficient vectors $F_{\alpha_iz_j},F_{y_i(\theta)z_j},F_{mnk}$, please refer to Appendix A.
\subsection{Main results of the normal forms} 
Our main results in this section is as follow.
\begin{theorem} \label{Phi-Psi}
	Assume that for system \eqref{A}, the hypothesis (\textbf{H}2) is valid. Then it undergoes a Turing-Hopf bifurcation at $\mu=\mu_0$. Moreover, the center subspace of the phase space $\mathcal{BC}$ and its dual operator $\mathcal{BC}^*$ are  $\mathcal{P}\!=\!span\!\{\phi_1(\theta)\beta_{n_{1}},\bar{\phi}_1(\theta)\beta_{n_{1}},\phi_2(\theta)\beta_{n_{2}}\!\}$ and $\mathcal{P}^*=span\{\psi_1(s)\beta_{n_{1}},\bar{\psi}_1(s)\beta_{n_{1}},\psi_2(s)\beta_{n_{2}}\}$, respectively.
	Here 
	\begin{equation}\label{phi-psi}
	\begin{aligned}
	\phi_1(\theta)&= e^{\mathrm{i}\omega_0\theta}(1\,,\,k_1)^{\mathrm{T}},\,\qquad
	\phi_2(\theta)=(1\,,\,k_3)^{\mathrm{T}},\\
	\psi_1(s)&= e^{\mathrm{-i}\omega_0 s}\,T_1(1\,,\,k_2),
	\quad\psi_2(s)=T_2(1\,,\,k_4),
	\end{aligned}
	\end{equation}
	with 
	\begin{equation*}
	\begin{aligned}
	&k_{1} = \frac{\mathrm{i}\omega_{0}-A_{11}(\mu_0) - B_{11}(\mu_0)e^{\mathrm{-i}\omega_{0}} }{ A_{12}(\mu_0)+B_{12}(\mu_0)e^{\mathrm{-i}\omega_{0}}},\quad
	k_{2} = \frac{\mathrm{i}\omega_{0}-A_{11}(\mu_0) - B_{11}(\mu_0)e^{\mathrm{-i}\omega_{0}} }{A_{21}(\mu_0) + B_{21}(\mu_0)e^{\mathrm{-i}\omega_{0}}},\\
	&	k_{3} = \frac{d_{1} \frac{n_{2}^2}{l^2}-A_{11}(\mu_0) - B_{11}(\mu_0)}{A_{12}(\mu_0)+ B_{12}(\mu_0)},\hspace{1cm}
	k_{4} = \frac{d_{1} \frac{n_{2}^2}{l^2}-A_{11}(\mu_0) - B_{11}(\mu_0)}{A_{21}(\mu_0) + B_{21}(\mu_0)},\\
	% \end{aligned} 	
	% \end{equation*}
	% \begin{equation*}
	%\begin{aligned}
	&T_{1} = \left\{{k_{1} k_{2}+1 + e^{\mathrm{-i}\omega_{0}} [B_{11}(\mu_0) + B_{21}(\mu_0) k_{2} + k_{1} (B_{12}(\mu_0) + B_{22}(\mu_0) k_{2})]}\right\}^{-1},\\		
	&T_{2} = \left\{[B_{12}(\mu_0) + B_{22}(\mu_0) k_{4} + k_{4}] k_{3} + [B_{11}(\mu_0) + B_{21}(\mu_0) k_{4} + 1]\right\}^{-1}.
	\end{aligned} 	
	\end{equation*}
\end{theorem}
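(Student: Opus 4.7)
The plan is to verify the theorem in four steps, following the standard Faria-style construction of center-subspace eigenfunctions for a delay equation, specialized to the Turing-Hopf mode assignment $n_1=0$, $n_2\neq 0$.

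First, I would use hypothesis (\textbf{H}2) together with the decomposition of $X$ by the orthonormal basis $\{\varphi_n^{(i)}\}$ to translate the abstract eigenvalue condition for $A$ into the finite-dimensional characteristic equation \eqref{eqCE} at the two relevant modes. At mode $n_1=0$ the diffusion term disappears and we are left with $\det[\mathrm{i}\omega_0 I - A(\mu_0) - B(\mu_0)e^{-\mathrm{i}\omega_0}]=0$; at mode $n_2$ with $\lambda=0$ we obtain $\det[\tfrac{n_2^2}{l^2}D_0 - A(\mu_0) - B(\mu_0)]=0$. This confirms that $\pm\mathrm{i}\omega_0$ and $0$ are characteristic values of \eqref{eqC} and therefore eigenvalues of $A$, and that $\mathcal{P}$ as described has the claimed dimension.

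Second, I would construct the eigenfunctions explicitly. Every eigenfunction of $A$ corresponding to a characteristic value $\lambda$ at mode $n$ is of the form $\phi(\theta)=e^{\lambda\theta}v\,\beta_n$ with $v\in\mathbb{C}^2$ in the kernel of the matrix pencil above. Normalizing the first coordinate of $v$ to $1$ and solving the first row of each pencil yields the stated formulas for $k_1$ (from $\mathrm{i}\omega_0$-mode $0$) and $k_3$ (from $0$-mode $n_2$). Performing the same computation for the formal adjoint $A^\ast$, whose characteristic matrix is the transpose of the original, gives the row vectors $(1,k_2)$ and $(1,k_4)$ with $k_2, k_4$ exactly as stated (the switch of indices $A_{12}\to A_{21}$, $B_{12}\to B_{21}$ in the denominators is the transposition).

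Third, I would pin down $T_1$ and $T_2$ using the normalization $(\psi_k,\phi_k)_k=1$. This is the only nontrivial computation in the proof: with the discrete delay the measure $\eta_k$ is the sum of a jump at $0$ and a jump at $-1$ (with weight $B(\mu_0)$), so the double integral in $(\,\cdot,\cdot\,)_k$ collapses, upon plugging in the exponential ansatz, to a single boundary term. Concretely $\int_{-1}^{0}\!\int_{0}^{\theta}\psi_1(\xi-\theta)\,d\eta_1(\theta)\,\phi_1(\xi)\,d\xi$ reduces to $-e^{-\mathrm{i}\omega_0}T_1(1,k_2)B(\mu_0)(1,k_1)^\mathrm{T}$; combining with $\psi_1(0)\phi_1(0)=T_1(1+k_1k_2)$ and setting the sum to $1$ produces the claimed expression for $T_1$. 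The computation for $T_2$ is analogous but simpler, because $\phi_2$ and $\psi_2$ are constant in $\theta,s$. The orthogonality $(\psi_1,\bar{\phi}_1)_1=0$ is automatic from the distinctness of the eigenvalues $\mathrm{i}\omega_0$ and $-\mathrm{i}\omega_0$, so it requires no separate verification.

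The main obstacle is the bilinear-form bookkeeping in step three: one must correctly identify the Stieltjes measure $\eta_k$ encoding the delay, handle the inner limits $\int_0^\theta d\xi$ (whose sign flips when $\theta<0$), and group the resulting quadratic form in $(1,k_2)B(\mu_0)(1,k_1)^\mathrm{T}$ into the exact shape displayed in the statement. Everything else is direct linear algebra on $2\times 2$ matrices at the two modes.
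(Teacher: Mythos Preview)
Your proposal is correct and follows essentially the same approach as the paper's proof: derive the exponential form of $\phi_i,\psi_i$ from the eigenvalue equations for $A$ and $A^*$, solve the first row of each $2\times 2$ characteristic matrix to obtain $k_1,\dots,k_4$, and fix $T_1,T_2$ via the bilinear-form normalization $(\psi_k,\phi_k)_k=1$. Your step~three actually spells out the Stieltjes-integral computation more explicitly than the paper does (it simply asserts the result for $T_1$ and says ``in the same way'' for $T_2$), but the underlying argument is identical.
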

\begin{theorem} \label{th2}
Assume that for system \eqref{A}, the hypothesis (\textbf{H}2) is valid.  Then the three order truncated normal forms near the Turing-Hopf singularity $\mu=\mu_0$ is
 \begin{equation}\label{eqNF3}
	\left\{
	\begin{aligned}
	\dot{z_{1}}=&\hspace{0.48cm}\mathrm{i}\omega_{0}z_{1}\!+\!\frac{1}{2}f_{\alpha_{1}z_1}^{11}\alpha_{1}z_{1}\!+\!\frac{1}{2}f_{\alpha_{2}z_1}^{11}\alpha_{2}z_{1}\!+\!\frac{1}{6}g_{210}^{11} z_1^2{\bar{z}_1} \!+\!\frac{1}{6} g_{102}^{11}z_1z_{2}^2,\\
	\dot{\bar{z}}_1=&-\mathrm{i}\omega_{0}{\bar{z}_1}\!+\!\frac{1}{2}\overline{f_{\alpha_{1}z_1}^{11}}\alpha_{1}{\bar{z}_1}\!+\!\frac{1}{2}\overline{f_{\alpha_{2}z_1}^{11}}\alpha_{2}{\bar{z}_1}\!+\!\frac{1}{6}\overline{g_{210}^{11}}z_1{\bar{z}_1^2}\!+\!\frac{1}{6}\overline{g_{102}^{11}}{\bar{z}_1}z_{2}^2 ,\\
	\dot{z_{2}}=&\hspace{1.67cm}\frac{1}{2}f_{\alpha_{1}z_{2}}^{13}\alpha_{1}z_{2}\!+\!\frac{1}{2}f_{\alpha_{2}z_{2}}^{13}\alpha_{2}z_{2}\!+\!\frac{1}{6}g_{111}^{13}z_1{\bar{z}_1}z_{2}\!+\!\frac{1}{6}g_{003}^{13}z_{2}^3,
	\end{aligned}\right.
	\end{equation}
	with $(\alpha_1,\alpha_2)=(\mu_1-\mu_{0,1},\mu_2-\mu_{0,2})$ and
	\begin{equation*}\label{alphaz}
	\begin{split}
	&f_{\alpha_{1}z_1}^{11} =
	%&\psi_1(0)F_{\alpha_{1}z_1}\langle \beta_{n_{1}},\beta_{n_{1}}\rangle=
	2\psi_1(0)\,\left[\frac{\partial}{\partial \mu_1}A(\mu_0)\phi_1(0)+\frac{\partial}{\partial \mu_1}B(\mu_0)\phi_1(-1)\right],\\
	&f_{\alpha_{2}z_1}^{11}=
	%&\psi_1(0)F_{\alpha_{2}z_1}\langle \beta_{n_{1}},\beta_{n_{1}}\rangle=
	2\psi_1(0)\,\left[\frac{\partial}{\partial \mu_2}A(\mu_0)\phi_1(0)+\frac{\partial}{\partial \mu_2}B(\mu_0)\phi_1(-1)\right],\\
	&f_{\alpha_{1}z_{2}}^{13} =
	%&\psi_2(0)F_{\alpha_{1}z_{2}}\langle\beta_{n_{2}},\beta_{n_{2}}\rangle\\=&
	2\psi_2(0)\,\left[-\frac{n_2^2}{l^2}\frac{\partial}{\partial \mu_1}D(\mu_0)\phi_2(0)+\frac{\partial}{\partial \mu_1}A(\mu_0)\phi_1(0)+\frac{\partial}{\partial \mu_1}B(\mu_0)\phi_1(-1)\right],\\
	&f_{\alpha_{2}z_{2}}^{13} =
	%&\psi_2(0)F_{\alpha_{2}z_{2}}\langle\beta_{n_{2}},\beta_{n_{2}}\rangle\\=&
	2\psi_2(0)\,\left[-\frac{n_2^2}{l^2}\frac{\partial}{\partial \mu_2}D(\mu_0)\phi_2(0)+\frac{\partial}{\partial \mu_2}A(\mu_0)\phi_1(0)+\frac{\partial}{\partial \mu_2}B(\mu_0)\phi_1(-1)\right],\\
	\end{split}
	\end{equation*}
	\begin{equation*}\label{g32}
	\begin{split}
	g_{210}^{11} = &f_{210}^{11}+\frac{3}{\mathrm{2i}\omega_{0}}(-f_{110}^{11} f_{200}^{11} +f_{110}^{11} f_{110}^{12} + \frac{2}{3}f_{020}^{11} f_{200}^{12})+\\
	&\frac{3}{2}\psi_1(0)[S_{yz_1}(\langle h_{110}(\theta)\beta_{n_{1}},\beta_{n_{1}}\rangle)  +  S_{y{\bar{z}_1}}(\langle h_{200}(\theta)\beta_{n_{1}},\beta_{n_{1}}\rangle)],\\
	%	\end{aligned}
	%	\end{equation*}
	%	\begin{equation*}
	%	\begin{split}
	g_{102}^{11} =&f_{102}^{11}+ \frac{3}{\mathrm{2i}\omega_{0}}(-2f_{002}^{11} f_{200}^{11} +f_{002}^{12} f_{110}^{11} +2f_{002}^{11} f_{101}^{13} )+\\
	&\frac{3}{2}\psi_1(0)[S_{yz_1}(\langle h_{002}(\theta)\beta_{n_{1}},\beta_{n_{1}}\rangle)  +  S_{yz_{2}}(\langle h_{101}(\theta)\beta_{n_{2}},\beta_{n_{1}}\rangle)],\hspace{2cm}\\
	g_{111}^{13} =& f_{111}^{13}+\frac{3}{\mathrm{2i}\omega_{0}}(-f_{101}^{13} f_{110}^{11} +f_{011}^{13} f_{110}^{12})+\frac{3}{2}\psi_2(0)[S_{yz_1}(\langle h_{011}(\theta)\beta_{n_{1}},\beta_{n_{2}}\rangle)  +\\&  S_{y{\bar{z}_1}}(\langle h_{101}(\theta)\beta_{n_{1}},\beta_{n_{2}}\rangle)+S_{yz_{2}}(\langle h_{110}(\theta)\beta_{n_{2}},\beta_{n_{2}}\rangle)],\\
	g_{003}^{13} = &f_{003}^{13}+\frac{3}{\mathrm{2i}\omega_{0}}(-f_{002}^{11} f_{101}^{13} +f_{002}^{12} f_{011}^{13})+\frac{3}{2}\psi_2(0)[S_{yz_{2}}(\langle h_{002}(\theta)\beta_{n_{2}},\beta_{n_{2}}\rangle)].
	\end{split}
	\end{equation*}
	Here $f_{mnk}^{12}=\overline{f_{mnk}^{11}}$,
	\begin{equation*}
	\begin{aligned}
	f_{mnk}^{11} =& \frac{1}{\sqrt{l\pi}}\psi_1(0)F_{mnk},\quad
	f_{mnk}^{13} =  \frac{1}{\sqrt{l\pi}}\psi_2(0)F_{mnk},\quad\mathrm{when}\;\; m+n+k=2,\\
	f_{mnk}^{11} =&  \frac{1}{{l\pi}}\psi_1(0) F_{mnk},\qquad
	f_{mnk}^{13} = \frac{1}{{l\pi}}\psi_2(0)F_{mnk},\hspace{0.7cm}\mathrm{when}\;\; m+n+k=3,
	\end{aligned}
	\end{equation*}
	\begin{equation*}\label{h1}
	\begin{aligned}
	&{\langle h_{200}(\theta)\beta_{n_1},\beta_{n_1} \rangle} 
	\!=\!\!\frac{e^{2\mathrm{i}\omega_{0}\theta}}{l\pi}\left[\!2\mathrm{i}\omega_0\!\!-\!\!L_0(e^{\mathrm{2i}\omega_0\cdot}I_d)\!\right]^{\!-\!1}\!F_{200}\!\!-\!\!\frac{1}{\mathrm{i}\omega_{0}\sqrt{l\pi}}[f_{200}^{1,1}\phi_1(\theta)\!\!+\!\!\frac{1}{3}f_{200}^{1,2}\bar{\phi}_1(\theta)],\\
	%\end{aligned}
	%\end{equation*}
	%\begin{equation*}\label{h2}
	%\begin{aligned}
	&{\langle h_{110}(\theta)\beta_{n_1},\beta_{n_1} \rangle} = -\frac{1}{l\pi}[L_0(I_d)]^{-1}F_{110}+\frac{1}{\mathrm{i}\omega_{0}\sqrt{l\pi}}\left[f_{110}^{1,1}\phi_1(\theta)-f_{110}^{1,2}\bar{\phi}_1(\theta)\right],\\
%	\end{aligned}
%	\end{equation*}
%	\begin{equation*}\label{h3}
%	\begin{aligned}
	&{\langle h_{110} (\theta)\beta_{n_2},\beta_{n_2} \rangle}= {\langle h_{110}(\theta)\beta_{n_1},\beta_{n_1} \rangle},\hspace{4cm}\\
%	\end{aligned}
%	\end{equation*}
%	\begin{equation*}\label{h4}
%	\begin{aligned}
	&{\langle h_{101} (\theta)\beta_{n_2},\beta_{n_1} \rangle} =~\frac{e^{\mathrm{i}\omega_0\theta}}{l\pi}\left[\!\mathrm{i} \omega_0+\frac{n_2^2}{l^2}D_0-L_0(e^{\mathrm{i}\omega_0\cdot} I_d)\!\right]^{\!-\!1}\!F_{101}-\frac{1}{\mathrm{i}\omega_{0}\sqrt{l\pi}}f_{101}^{1,3}\phi_2(0),\\
	\end{aligned}
	\end{equation*}
	\begin{equation*}\label{h5}
	\begin{aligned}
	&{\langle h_{011} (\theta)\beta_{n_1},\beta_{n_2} \rangle}=\!\frac{e^{\mathrm{-i}\omega_0\theta}}{l\pi}\left[\!-\mathrm{i} \omega_0\!+\!\frac{n_2^2}{l^2}D_0\!-\!L_0(e^{\mathrm{-i}\omega_0\cdot} I_d)\!\right]^{\!-\!1}\!F_{011}\!+\!\frac{1}{\mathrm{i}\omega_{0}\sqrt{l\pi}}f_{011}^{1,3}\phi_2(0),\\
	%\end{aligned}
	%\end{equation*}
	%\begin{equation*}\label{h6}
	%\begin{aligned}
	&{\langle h_{002} (\theta)\beta_{n_1},\beta_{n_1} \rangle}=-\frac{1}{l\pi}\left[L_0(I_d)\right]^{-1}F_{002}+\frac{1}{\mathrm{i}\omega_{0}\sqrt{l\pi}}\left[f_{002}^{1,1}\phi_1(\theta)-f_{002}^{1,2}\bar{\phi}_1(\theta)\right],\\
	%\end{aligned}
	%\end{equation*}
	%\begin{equation*}\label{h7}
	%\begin{aligned}
	&{\langle h_{002} (\theta)\beta_{n_2},\beta_{n_2} \rangle}=\frac{1}{2l\pi}\left[\frac{(2n_2)^2}{l^2}D_0\!-\!L_0(I_d)\right]^{-1}F_{002}\!+\!{\langle h_{002} (\theta)\beta_{n_1},\beta_{n_1} \rangle},
	\end{aligned}
	\end{equation*}
	and $S_{yz_i} (i=1,2),S_{y\bar{z}_1}$ are linear operators from $ \mathcal{Q}_{1}$ to $X_{\mathbb{C}}$ and given by
	\begin{equation*}\label{S}
	\begin{aligned}
	S_{yz_i}(\varphi) =& (F_{y_1(0)z_i},\;F_{y_2(0)z_i})\varphi(0)+(F_{y_1(-1)z_i},\; F_{y_2(-1)z_i})\varphi(-1),\\
	S_{y\bar{z}_1}(\varphi) =& (\overline{F_{y_1(0){z}_1}},\;\overline{F_{y_2(0){z}_1}})\varphi(0)+(\overline{F_{y_1(-1){z}_1}},\; \overline{F_{y_2(-1){z}_1}})\varphi(-1).\\
	\end{aligned}
	\end{equation*}
	
\end{theorem}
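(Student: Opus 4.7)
The plan is to specialize the general normal-form machinery of Section~2.3 to the Turing--Hopf case $n_1=0$, $n_2\neq 0$ assumed in (\textbf{H}2), and then feed in the explicit expansions of $F_0^{(2)}$ and $F_0^{(3)}$ stated just before the theorem. The decomposition formulas of Theorem \ref{Phi-Psi} (eigenfunctions and dual eigenfunctions normalized by $(\psi_i,\phi_i)_i=1$) fix the projection $\pi$; with these in hand the coordinates $(z_1,\bar z_1,z_2,y)$ of \eqref{U^{t}}--\eqref{zy} are determined and the reduced system \eqref{eq14} becomes completely explicit once the quadratic and cubic Taylor coefficients $F_{mnk}$ and $F_{\alpha_i z_j}$ are read off.

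For the second-order piece of \eqref{normalform3}, I would apply $g_2^1(z,0,\alpha)=\mathbf{P}_{\mathrm{Im}(M_2^1)^{\mathrm{c}}}f_2^1(z,0,\alpha)$ using the basis \eqref{ImM_2^1^c}. The parameter-dependent coefficients $f^{11}_{\alpha_i z_1}$ and $f^{13}_{\alpha_i z_j}$ fall out at once by pairing $\psi_1,\psi_2$ with the $\mu_i$-derivatives of $-\tfrac{n_k^2}{l^2}D(\mu)+A(\mu)+B(\mu)e^{\lambda\cdot}$ acting on $\phi_1,\phi_2$; no center-manifold correction is yet needed. The integral identities $\int_0^{l\pi}\beta_0^3\,dx=1/\sqrt{l\pi}$, $\int_0^{l\pi}\beta_0\beta_{n_2}^2\,dx=1/\sqrt{l\pi}$ (for $n_2\ge 1$) and $\int_0^{l\pi}\beta_{n_2}^3\,dx=0$ produce the prefactors $1/\sqrt{l\pi}$ in $f^{11}_{mnk},f^{13}_{mnk}$ for $m+n+k=2$ and eliminate several would-be quadratic resonances.

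For the third-order terms, the computation proceeds through $\bar f_3^1(z,0,0)=f_3^1(z,0,0)+\tfrac32\bigl[D_z f_2^1\,U_2^1+D_y f_2^1\,U_2^2-D_z U_2^1\,g_2^1\bigr]$. The first piece needs $U_2^1(z,0)=(M_2^1)^{-1}\mathbf{P}_{\mathrm{Im}(M_2^1)}f_2^1(z,0,0)$, which is purely algebraic. The bookkeeping-heavy piece is $U_2^2(z,0)$; for this I would invoke Theorem \ref{U22}, expand $U_2^2(\theta)=\sum h_{mnk}(\theta)z_1^m\bar z_1^n z_2^k$, and solve the six linear inhomogeneous ODEs coming from \eqref{thetaneq0} on $[-1,0)$. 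Each is of the form $\dot h_{mnk}(\theta)-\lambda_{mnk}\,h_{mnk}(\theta)=$ (known trigonometric polynomial in $\theta$), with $\lambda_{mnk}\in\{2\mathrm{i}\omega_0,0,\mathrm{i}\omega_0,-2\mathrm{i}\omega_0,-\mathrm{i}\omega_0,0\}$. The boundary relation \eqref{theta=0} plus the four orthogonality conditions \eqref{U22=} uniquely determine the integration constants, and projecting onto $\beta_{n_1},\beta_{n_2}$ collapses the results to the closed-form expressions for $\langle h_{mnk}(\theta)\beta_{n_i},\beta_{n_j}\rangle$ quoted in the statement; the operators $\bigl[\mathrm{i}\omega_0 I-L_0(e^{\mathrm{i}\omega_0\cdot}I_d)\bigr]^{-1}$ etc.\ appear as the inverses of the characteristic matrix at the non-resonant modes.

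Finally, I would apply $\mathbf{P}_{\mathrm{Im}(M_3^1)^{\mathrm{c}}}$ with the basis \eqref{ImM_3^1^c} to $\bar f_3^1(z,0,0)$, collect coefficients of $z_1^2\bar z_1,\,z_1 z_2^2,\,z_1\bar z_1 z_2,\,z_2^3$, and identify them with $g^{11}_{210},g^{11}_{102},g^{13}_{111},g^{13}_{003}$. The contributions of $D_y f_2^1\,U_2^2$ are exactly packaged by the linear maps $S_{yz_i}$ and $S_{y\bar z_1}$, which is why they appear in the final formulas. The expected main obstacle is the $U_2^2$ step: one must carefully track which monomials $h_{mnk}$ couple through the mode index $n_k$ (note in particular the identities $\langle h_{110}(\theta)\beta_{n_2},\beta_{n_2}\rangle=\langle h_{110}(\theta)\beta_{n_1},\beta_{n_1}\rangle$ and the two-term decomposition of $\langle h_{002}(\theta)\beta_{n_2},\beta_{n_2}\rangle$, both of which come from projecting $\beta_0\beta_{n_2}^2$ and $\beta_{n_2}^2$ against $\beta_0$ and $\beta_{n_2}$), and invert the characteristic operator on the complement of $\mathrm{span}\{\phi_1(0),\bar\phi_1(0),\phi_2(0)\}$ without running into the resonances guaranteed by (\textbf{H}2).
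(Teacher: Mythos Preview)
Your proposal is correct and follows essentially the same route as the paper: the paper organizes the third-order computation by splitting $\bar f_3^1$ into the four pieces $G_1=f_3^1$, $G_2=\tfrac32 D_z f_2^1\,U_2^1$, $G_3=\tfrac32 D_y f_2^1\,U_2^2$, $G_4=-\tfrac32 D_z U_2^1\,g_2^1$ and devotes a lemma to each, but the underlying calculations (mode orthogonalities, explicit $U_2^1$, the $h_{mnk}$ via Theorem~\ref{U22}, and the $S_{yz_i}$ packaging) are exactly the ones you outline. The one simplification you should flag explicitly is that the vanishing of $f_{101}^{11},f_{110}^{13},f_{002}^{13}$ forces $g_2^1(z,0,0)=0$, so $G_4\equiv 0$ and the $-D_zU_2^1\,g_2^1$ term drops out entirely.
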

Up to here, we have given the complete algorithm of the normal forms for a general time-delayed reaction-diffusion system \eqref{A} with a Truing-Hopf singularity. The whole calculation process can be automated by Matlab.

\subsection{Proof of Theorem \ref{Phi-Psi}-Theorem \ref{th2}}
In this subsection we will give the proof of Theorem \ref{Phi-Psi}-Theorem \ref{th2}
\begin{proof}[{\bf Proof of Theorem \ref{Phi-Psi}.}]
	Since $\phi_1(\theta),\psi_1(s)$ are satisfy 
	\begin{equation*}
	A\phi_{1}(\theta)\beta_{n_{1}}=\mathrm{i}\omega_0\phi_{1}(\theta)\beta_{n_{1}},\quad A^{*}\psi_{1}(s)\beta_{n_{1}}=\mathrm{i}\omega_0\psi_{1}(s)\beta_{n_{1}},\quad
	\end{equation*} 
	we have $\phi_1(\theta) = e^{\mathrm{i}\omega_0\theta}\phi_1(0)$, $\psi_1(s) = e^{\mathrm{-i}\omega_0s}\psi_1(0)$. Meanwhile, from
	\begin{equation*}
	\begin{split}
	&\left[A(\mu_0)+B(\mu_0)e^{\mathrm{-i}\omega_0}\mathrm{-i}\omega_0\cdot I_d\right]\cdot\phi_1(0)=0,\\
	&\psi_1(0)\cdot\left[A(\mu_0)+B(\mu_0)e^{\mathrm{-i}\omega_0}\mathrm{-i}\omega_0\cdot I_d\right]=0,\\
	& (\psi_{1},\phi_{1})_{1}=1,
	\end{split}
	\end{equation*} 
	we  chose
	\begin{equation*}
	\begin{aligned}
	&k_{1} = \frac{\mathrm{i}\omega_{0}-A_{11}(\mu_0) - B_{11}(\mu_0)e^{\mathrm{-i}\omega_{0}} }{ A_{12}(\mu_0)+B_{12}(\mu_0)e^{\mathrm{-i}\omega_{0}}},\quad
	k_{2} = \frac{\mathrm{i}\omega_{0}-A_{11}(\mu_0) - B_{11}(\mu_0)e^{\mathrm{-i}\omega_{0}} }{A_{21}(\mu_0) + B_{21}(\mu_0)e^{\mathrm{-i}\omega_{0}}},\\
	\end{aligned} 	
	\end{equation*}
	and eventually calculate
	\begin{align*}
	&T_{1} = \left\{{k_{1} k_{2}+1 + e^{\mathrm{-i}\omega_{0}} [B_{11}(\mu_0) + B_{21}(\mu_0) k_{2} + k_{1} (B_{12}(\mu_0) + B_{22}(\mu_0) k_{2})]}\right\}^{-1},	
	\end{align*}
	In the same way, $k_3,k_4,T_2$ can be obtained. This complete the proof.
\end{proof}
In order to complete the proof Theorem \ref{th2} , 
we separate $\bar{f}_{3}^{1}(z,0,0)$ into the following four parts
\begin{equation*}
\begin{aligned}
G_1(z) =& f_{3}^{1}(z,0,0),\hspace{2.2cm}
G_2(z) = \frac{3}{2}D_{z}f_{2}^{1}(z,0,0)U_{2}^{1}(z,0),\\
G_3(z) =& \frac{3}{2}D_{y}f_{2}^{1}(z,0,0)U_{2}^{2}(z,0),\quad
G_4(z) = \frac{3}{2}D_{z}U_{2}^{1}(z,0)g_{2}^{1}(z,0,0).
\end{aligned}
\end{equation*}
And start with the following five lemmas.
\begin{lemma}\label{lemma1}
	Assume that for system \eqref{A}, the hypothesis (\textbf{H}2) is valid. Then the quadratic term in the normal form $\dot{z}=Bz+\frac{1}{2!}g_{2}^{1}(z,0,\alpha)+h.o.t.$ near the Turing-Hopf singularity $\mu=\mu_0$ is 
	\begin{equation*}\label{g21}
	g_2^1(z,0,\alpha) = \left(
	\begin{array}{c}
	f_{\alpha_{1}z_1}^{11}\alpha_{1}z_1+f_{\alpha_{2}z_1}^{11}\alpha_{2}z_1 \\
	\overline{f_{\alpha_{1}z_1}^{11}}\alpha_{1}\bar{z}_1+\overline{f_{\alpha_{2}z_1}^{11}}\alpha_{2}\bar{z}_1 \\
	f_{\alpha_{1}z_{2}}^{13}\alpha_{1}z_{2}+f_{\alpha_{2}z_{2}}^{13}\alpha_{2}z_{2}
	\end{array}\right),
	\end{equation*}
	with $f_{\alpha_{1}z_1}^{11}$, $f_{\alpha_{2}z_1}^{11}$, $f_{\alpha_{1}z_{2}}^{13}$, $f_{\alpha_{2}z_{2}}^{13}$ are given as in Theorem \ref{th2}.
\end{lemma}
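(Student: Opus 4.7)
The plan is to apply the abstract formula $g_2^1(z,0,\alpha)=\mathbf{P}_{\mathrm{Im}(M_2^1)^c}\circ f_2^1(z,0,\alpha)$ from Section 2.3, with the resonant basis listed in \eqref{ImM_2^1^c}. First I would unwrap $f_2^1(z,0,\alpha)$ from its defining expression
$$f_2^1(z,0,\alpha)=\Psi(0)\bigl(\langle F_0^{(2)}(z,0,\alpha),\beta_{n_1}\rangle,\;\langle F_0^{(2)}(z,0,\alpha),\beta_{n_2}\rangle\bigr)^{\mathrm{T}},$$
substituting $y=0$ into the explicit Taylor expansion of $F_0^{(2)}$ given just before Theorem \ref{Phi-Psi}, so that only the parameter-coupled terms $F_{\alpha_i z_j}\alpha_i z_j$ and the pure-$z$ quadratic terms $F_{mnk}z_1^m\bar{z}_1^n z_2^k\beta_{n_1}^{m+n}\beta_{n_2}^k$ ($m+n+k=2$) remain.

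The crucial observation is that hypothesis $(\mathbf{H}2)$ forces $n_1=0$ and $n_2\neq 0$, so $\beta_0=1/\sqrt{l\pi}$ is constant and the orthogonality relations $\int_0^{l\pi}\beta_{n_2}\,dx=0$ and $\int_0^{l\pi}\beta_{n_2}^3\,dx=0$ hold. Combined with $\int_0^{l\pi}\beta_{n_k}^2\,dx=1$, these kill precisely the inner products needed for the pure-$z$ resonant monomials: the coefficients of $z_1z_2\,e_1$ and $\bar{z}_1z_2\,e_2$ require $\langle F_{101}\beta_0\beta_{n_2},\beta_0\rangle$ and $\langle F_{011}\beta_0\beta_{n_2},\beta_0\rangle$, which both vanish; and the coefficients of $z_1\bar{z}_1\,e_3$ and $z_2^2\,e_3$ require $\langle F_{110}\beta_0^2,\beta_{n_2}\rangle$ and $\langle F_{002}\beta_{n_2}^2,\beta_{n_2}\rangle$, which also vanish. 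Thus every pure-$z$ quadratic resonant term drops out.

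What survives are the parameter-coupled contributions $F_{\alpha_i z_1}\alpha_i z_1\beta_0$, $F_{\alpha_i\bar{z}_1}\alpha_i\bar{z}_1\beta_0$ and $F_{\alpha_i z_2}\alpha_i z_2\beta_{n_2}$; pairing each with the matching $\beta_{n_k}$ (giving $1$ by normalization) and multiplying by $\Psi(0)$ produces exactly the three nonzero entries of $g_2^1(z,0,\alpha)$ displayed in the statement. The explicit formulas for $f_{\alpha_i z_1}^{11}$ and $f_{\alpha_i z_2}^{13}$ then follow by substituting the Appendix A expressions $F_{\alpha_i z_1}=2[\partial_{\mu_i}A(\mu_0)\phi_1(0)+\partial_{\mu_i}B(\mu_0)\phi_1(-1)]$ and the analogous formula with the extra diffusion piece $-\tfrac{n_2^2}{l^2}\partial_{\mu_i}D(\mu_0)\phi_2(0)$ for the $z_2$-equation. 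The conjugate entry in the second row is automatic since $F_{\alpha_i\bar{z}_1}=\overline{F_{\alpha_i z_1}}$ and $\Psi(0)$ carries $\bar{\psi}_1(0)$ in its second slot.

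The main obstacle is purely bookkeeping: verifying the factor of $2$ in each $F_{\alpha_i z_j}$ arising from the symmetry of the second Fr\'echet derivative, and confirming that the Appendix A conventions line up with the multinomial expansion of $F_0^{(2)}$ so that the $1/2!$ normalization in the normal form \eqref{normalform3} cancels correctly against the coefficients produced by the projection. There is no analytic difficulty beyond this accounting; the vanishing of the pure-$z$ part of $g_2^1$ is entirely a consequence of the Neumann-eigenfunction orthogonality together with $n_1=0$, $n_2\neq 0$.
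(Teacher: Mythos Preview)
Your proposal is correct and follows essentially the same route as the paper's proof: both start from $g_2^1=\mathbf{P}_{\mathrm{Im}(M_2^1)^c}\circ f_2^1$, list the resonant monomials from \eqref{ImM_2^1^c}, and use the orthogonality relations $\langle\beta_{n_1}\beta_{n_2},\beta_{n_1}\rangle=\langle\beta_{n_1}^2,\beta_{n_2}\rangle=\langle\beta_{n_2}^2,\beta_{n_2}\rangle=0$ (consequences of $n_1=0$, $n_2\neq0$) to kill the pure-$z$ resonant coefficients $f_{101}^{11},f_{110}^{13},f_{002}^{13}$, leaving only the $\alpha_i z_j$ terms. Your treatment is slightly more explicit about \emph{why} those integrals vanish, but the argument is the same.
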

\begin{proof}
	From \eqref{F0}, we have the second-order Fr\'{e}chet derivative of $F_{0}(\alpha,\varphi)$ is
	\begin{equation*}\label{F02}
	\begin{split}
	F_{0}^{(2)}(\alpha,\varphi) \!\!=& 2\!\left[\alpha_1\! \frac{\partial}{\partial \mu_1}\!D(\mu_0)\!\!+\!\!\alpha_2\!\frac{\partial}{\partial \mu_2}\!D(\mu_0)\right]\!\!\Delta \varphi(0)\!\!+\!\!2\left[\alpha_1\! \frac{\partial}{\partial \mu_1}\!A(\mu_0)\!\!+\!\!\alpha_2\!\frac{\partial}{\partial \mu_2}\!A(\mu_0)\right]\!\varphi(0)\\&+2\!\left[\alpha_1 \frac{\partial}{\partial \mu_1}B(\mu_0)+\alpha_2\frac{\partial}{\partial \mu_2}B(\mu_0)\right]\varphi(-1)+O(\varphi^2)
	\end{split}
	\end{equation*}
	According to the decomposition \eqref{U^{t}}, we have
	\begin{equation*}
	\begin{aligned}
	&F_{\alpha_{i}z_1} =2\,\left[\frac{\partial}{\partial \mu_i}A(\mu_0)\phi_1(0)+\frac{\partial}{\partial \mu_i}B(\mu_0)\phi_1(-1)\right],\hspace{3.25cm} (i=1,2),\\
	&F_{\alpha_{i}z_{2}} =2\,[-\frac{n_2^2}{l^2}\frac{\partial}{\partial \mu_i}D(\mu_0)\phi_2(0)+\frac{\partial}{\partial \mu_i}A(\mu_0)\phi_2(0)+\frac{\partial}{\partial \mu_i}\textbf{}(\mu_0)\phi_2(-1)],\quad (i=1,2).\\
	\end{aligned}
	\end{equation*}
	Based on the basis of $\mathrm{Im}(M_{2}^{1})^{\mathrm{c}}$, to obtain the final expression of $g_2^1(z,0,\alpha)$, we just need to calculate $f_{\alpha_{1}z_1}^{11},$ $f_{\alpha_{2}z_1}^{11},$ $f_{\alpha_{1}z_{2}}^{13},$ $f_{\alpha_{2}z_{2}}^{13}$, $f_{101}^{11},$ $f_{110}^{13},$ $f_{002}^{13}$.
	
	Due to the fact that 
	$\langle \beta_{n_{1}},\beta_{n_{1}}\rangle = \langle \beta_{n_{2}},\beta_{n_{2}}\rangle=1,$ and
	$$\langle \beta_{n_{1}}\beta_{n_{2}},\beta_{n_{1}}\rangle =
	\langle \beta_{n_{1}}\beta_{n_{1}},\beta_{n_{2}}\rangle=\langle \beta_{n_{2}}\beta_{n_{2}},\beta_{n_{2}}\rangle= 0.$$
	We have $f_{101}^{11}=f_{110}^{13}=f_{002}^{13}=0$, and 
	$f_{\alpha_{1}z_1}^{11}$, $f_{\alpha_{2}z_1}^{11}$, $f_{\alpha_{1}z_{2}}^{13}$, $f_{\alpha_{2}z_{2}}^{13}$ are given as in
	Lemma  \ref{lemma1},
	which complete the proof.
\end{proof}
\begin{lemma} \label{lemmaG1}
	Assume that for system \eqref{A}, the hypothesis (\textbf{H}2) is valid. Then
	\begin{equation} \label{G1}
	\begin{aligned}
	\mathbf{P}_{\mathrm{Im}(M_{3}^{1})^{\mathrm{c}}}\comp G_1(z) =\left( \begin{array}{c}
	f_{210}^{11}z_1^2{\bar{z}_1}+f_{102}^{11}z_1z_{2}^2\\
	\overline{f_{210}^{11}}{\bar{z}_1}^2z_{1}+\overline{f_{102}^{11}}{\bar{z}_1}z_{2}^2\\
	f_{111}^{13}z_1{\bar{z}_1}z_{2}+f_{003}^{13}z_{2}^3
	\end{array}\right)
	\end{aligned}
	\end{equation}	
	with $f_{mnk}^{11} = \frac{1}{{l\pi}}\psi_1(0) F_{mnk}$ and $f_{mnk}^{13} = \frac{1}{{l\pi}}\psi_2(0)F_{mnk}$.
\end{lemma}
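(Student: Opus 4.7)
The plan is to compute $G_1(z)=f_3^1(z,0,0)$ directly from its definition and then apply the projection onto $\mathrm{Im}(M_3^1)^{\mathrm{c}}$. Recalling that
\[
f_3^1(z,0,0)=\Psi(0)\bigl(\langle F_0^{(3)}(z,0,0),\beta_{n_1}\rangle,\;\langle F_0^{(3)}(z,0,0),\beta_{n_2}\rangle\bigr)^{\mathrm{T}},
\]
I would substitute the expansion $F_0^{(3)}(z,0,0)=\sum_{m+n+k=3}F_{mnk}z_1^m\bar{z}_1^n z_2^k\beta_{n_1}^{m+n}\beta_{n_2}^k$, so that the whole task reduces to evaluating the scalar inner products $\langle\beta_{n_1}^{m+n}\beta_{n_2}^k,\beta_{n_j}\rangle$ for $j\in\{1,2\}$ over the ten triples with $m+n+k=3$.

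The crucial simplification is the hypothesis $n_1=0$, which makes $\beta_{n_1}=1/\sqrt{l\pi}$ constant, so each inner product collapses to a power of $(l\pi)^{-1/2}$ times $\int_0^{l\pi}\cos^{r}(n_2 x/l)\,dx$, where $r$ is the total power of $\cos(n_2 x/l)$ that remains after multiplication by the test function. The standard identities $\int_0^{l\pi}\cos(n_2 x/l)\,dx=0$ and $\int_0^{l\pi}\cos^3(n_2 x/l)\,dx=0$ (the latter via the triple-angle formula) for $n_2\geq 1$ then kill every monomial in which the total power of $\beta_{n_2}$ is odd. A short case check shows that in the $\beta_{n_1}$-row only the six terms coming from $F_{300},F_{210},F_{120},F_{030},F_{102},F_{012}$ survive, while in the $\beta_{n_2}$-row only $F_{201},F_{111},F_{021},F_{003}$ do; the corresponding non-vanishing integrals are absorbed into the $1/(l\pi)$ prefactor built into the definitions of $f_{mnk}^{11}$ and $f_{mnk}^{13}$ in Theorem \ref{th2}.

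Next I would unpack $\Psi(0)=\mathrm{diag}(\Psi_1(0),\Psi_2(0))$ with $\Psi_1(0)=(\psi_1(0)^{\mathrm{T}},\bar\psi_1(0)^{\mathrm{T}})^{\mathrm{T}}$: the first row of the pre-projection expression comes from applying $\psi_1(0)$ to the $\beta_{n_1}$-block, the second row from $\bar\psi_1(0)$ applied to the same block, and the third row from $\psi_2(0)$ applied to the $\beta_{n_2}$-block. Applying $\mathbf{P}_{\mathrm{Im}(M_3^1)^{\mathrm{c}}}$ with the basis \eqref{ImM_3^1^c} then discards the non-resonant survivors (for instance the $z_1^3$ term arising from $F_{300}$, the $\bar z_1^3$ from $F_{030}$, and the $z_1\bar z_1^2$ from $F_{120}$ in the first row) and retains exactly the six monomials listed in \eqref{G1}. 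Finally, reality of $f,g$ yields the symmetries $F_{120}=\overline{F_{210}}$ and $F_{012}=\overline{F_{102}}$, which combined with $\bar\psi_1(0)$ in the second row convert those coefficients into the complex conjugates $\overline{f_{210}^{11}}$ and $\overline{f_{102}^{11}}$.

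The main obstacle is the careful parity bookkeeping across the ten triples: although no individual integral is difficult, one must verify that the odd-$k$ contributions tested against $\beta_{n_1}$ and the even-$k$ contributions tested against $\beta_{n_2}$ really all vanish before projection, so that no hidden cross terms in the $z_2$-odd directions such as $z_1^2 z_2$, $z_1\bar z_1 z_2$ (in the first row) or $z_1^2$, $z_2^2$-type (in the third row) are being silently dropped. Once these parity checks are in place, the closed-form coefficients $f_{mnk}^{11}$ and $f_{mnk}^{13}$ drop out and \eqref{G1} follows.
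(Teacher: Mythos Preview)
Your approach is correct and matches the paper's: both arguments reduce the computation to evaluating the inner products $\langle\beta_{n_1}^{m+n}\beta_{n_2}^k,\beta_{n_j}\rangle$ using $n_1=0$ and the parity of cosine powers, then apply the projection onto the basis \eqref{ImM_3^1^c}. The paper is terser---it simply records the four relevant values $\langle\beta_{n_1}^3,\beta_{n_1}\rangle=\langle\beta_{n_1}\beta_{n_2}^2,\beta_{n_1}\rangle=\langle\beta_{n_1}^2\beta_{n_2},\beta_{n_2}\rangle=\tfrac{1}{l\pi}$ and $\langle\beta_{n_2}^3,\beta_{n_2}\rangle=\tfrac{3}{2l\pi}$---whereas you spell out the parity bookkeeping explicitly. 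One small point: your blanket claim that ``the corresponding non-vanishing integrals are absorbed into the $1/(l\pi)$ prefactor'' is not quite uniform, since $\langle\beta_{n_2}^3,\beta_{n_2}\rangle=\tfrac{3}{2l\pi}$ carries an extra $3/2$; the paper records this value in its proof, so when you write up the $f_{003}^{13}$ coefficient you should track that factor rather than assume it matches the others.
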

\begin{proof}
	The projection \eqref{G1} is obvious, since $\mathrm{Im}(M_{3}^{1})^{\mathrm{c}}$ spanned by 
	\eqref{ImM_3^1^c}.   To complete the proof, we just need to  calculate  $f_{210}^{11}$, $f_{102}^{11}$, $f_{111}^{13}$, $f_{003}^{13}$.
	
	Since
	$\langle \beta_{n_{1}}^3,\beta_{n_{1}}\rangle = \langle \beta_{n_{2}},\beta_{n_{2}}^2\rangle=\langle \beta_{n_{1}}^2\beta_{n_{2}},\beta_{n_{2}}\rangle=\frac{1}{l\pi},\; \langle \beta_{n_{2}}^3,\beta_{n_{2}}\rangle=\frac{3}{2l\pi},$
	and 
	\begin{equation*}
	f_{mnk}^{1}=\Psi(0)
	\left(\begin{array}{c}
	\langle F_{mnk}z_1^m\bar{z}_1^nz_2^k\beta_{n_{1}}^{m+n}\beta_{n_{2}}^k,\beta_{n_{1}}\rangle\\
	\langle F_{mnk}z_1^m\bar{z}_1^nz_2^k\beta_{n_{1}}^{m+n}\beta_{n_{2}}^k,\beta_{n_{2}}\rangle
	\end{array}
	\right)
	\end{equation*}	
	We complete the proof.
\end{proof}

\begin{lemma} \label{lemmaG2}
	Assume that for system \eqref{A}, the hypothesis (\textbf{H}2) is valid. Then
	\begin{equation} \label{G2}
	\begin{aligned}
	\mathbf{P}_{\mathrm{Im}(M_{3}^{1})^{\mathrm{c}}}\comp G_2(z) =\left( \begin{array}{c}
	G_{210}^{21}z_1^2{\bar{z}_1}+G_{102}^{21}z_1z_{2}^2\\
	\overline{G_{210}^{21}}z_1{\bar{z}_1^2}+\overline{G_{102}^{21}}{\bar{z}_1}z_{2}^2\\
	G_{111}^{23}z_1{\bar{z}_1}z_{2}+G_{003}^{23}z_{2}^3
	\end{array}\right)
	\end{aligned}
	\end{equation}	
	with 
	\begin{equation*} 
	\begin{aligned}
	G_{210}^{21}=&\frac{3}{\mathrm{2i}\omega_{0}}(-f_{110}^{11} f_{200}^{11} +f_{110}^{11} f_{110}^{12} + \frac{2}{3}f_{020}^{11} f_{200}^{12}),\\
	G_{102}^{21}=&\frac{3}{\mathrm{2i}\omega_{0}}(-2f_{002}^{11} f_{200}^{11} +f_{002}^{12} f_{110}^{11} +2f_{002}^{11} f_{101}^{13} ),\\
	G_{111}^{23}=&\frac{3}{\mathrm{2i}\omega_{0}}(-f_{101}^{13} f_{110}^{11} +f_{011}^{13} f_{110}^{12}),\\
	G_{003}^{23}=&\frac{3}{\mathrm{2i}\omega_{0}}(-f_{002}^{11} f_{101}^{13} +f_{002}^{12} f_{011}^{13}).\\
	\end{aligned}
	\end{equation*}	
	with $f_{mnk}^{1i}$ $(m+n+k=2,\,i=1,2,3)$ are given as in Theorem \ref{th2}
\end{lemma}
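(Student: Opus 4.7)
The plan is to unpack $G_2(z)=\tfrac{3}{2}D_z f_2^1(z,0,0)\,U_2^1(z,0)$ as a product of two explicit quadratic polynomials in $z=(z_1,\bar{z}_1,z_2)$, then read off the four resonant cubic coefficients. The whole computation is algebraic once $U_2^1(z,0)$ is known, so I would start there.

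First I would compute $U_2^1(z,0)\in V_2^{3}(\mathbb{C}^3)$ from the defining relation $M_2^1\,U_2^1(z,0)=\mathbf{P}_{\mathrm{Im}(M_2^1)}\,f_2^1(z,0,0)$. The operator $M_2^1$ is diagonal on the monomial basis: for $z_1^{m}\bar{z}_1^{n}z_2^{k}e_{j}$ its eigenvalue is $(m-n)\mathrm{i}\omega_0-\lambda_j$ with $\lambda_1=\mathrm{i}\omega_0$, $\lambda_2=-\mathrm{i}\omega_0$, $\lambda_3=0$, and the resonant monomials listed in \eqref{ImM_2^1^c} are precisely those with zero eigenvalue. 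Since $n_1=0$ and $n_2\neq 0$, the orthogonality identities $\langle\beta_{n_1}\beta_{n_2},\beta_{n_1}\rangle=\langle\beta_{n_1}^{2},\beta_{n_2}\rangle=\langle\beta_{n_2}^{2},\beta_{n_2}\rangle=0$ kill many entries of $f_2^1(z,0,0)$: only $f_{200}^{11},f_{110}^{11},f_{020}^{11},f_{002}^{11}$ (and their conjugates $f_{mnk}^{12}$) survive in the first two components, while only $f_{101}^{13},f_{011}^{13}$ survive in the third. Dividing each surviving non-resonant monomial by its eigenvalue yields a compact explicit formula for $U_2^1(z,0)$ of the shape
\begin{equation*}
U_2^{1,j}(z,0)=\sum_{m+n+k=2}\frac{f_{mnk}^{1j}}{(m-n)\mathrm{i}\omega_0-\lambda_j}\,z_1^{m}\bar{z}_1^{n}z_2^{k}\qquad (j=1,2,3),
\end{equation*}
where only the surviving coefficients above appear, and where resonant monomials are dropped.

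Next, I would write the Jacobian $D_z f_2^1(z,0,0)$ as a $3\times 3$ matrix of linear forms, obtained by differentiating the reduced quadratic $f_2^1(z,0,0)$ computed above. The matrix product $D_z f_2^1(z,0,0)\cdot U_2^1(z,0)$ is then a cubic polynomial whose coefficients are sums of products $f_{\cdot}^{1j}\cdot f_{\cdot}^{1j'}/[(m-n)\mathrm{i}\omega_0-\lambda_{j'}]$. Since $\mathrm{Im}(M_3^1)^{\mathrm{c}}$ is spanned by the six monomials in \eqref{ImM_3^1^c}, I need only extract the coefficients of $z_1^{2}\bar{z}_1\,e_1$, $z_1 z_2^{2}\,e_1$, $z_1\bar{z}_1 z_2\,e_3$ and $z_2^{3}\,e_3$; the $e_2$ coefficients then follow automatically by complex conjugation, which is consistent with the statement that $\overline{G_{210}^{21}}$ and $\overline{G_{102}^{21}}$ appear in the second row of \eqref{G2}.

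The remaining work is bookkeeping: for $z_1^{2}\bar{z}_1\,e_1$ I would collect the contributions $(2f_{200}^{11}z_1)\cdot[\text{coeff of }z_1\bar{z}_1\text{ in }U_2^{1,1}]$, $(f_{110}^{11}\bar{z}_1)\cdot[\text{coeff of }z_1^{2}\text{ in }U_2^{1,1}]$, $(f_{110}^{11}z_1)\cdot[\text{coeff of }z_1\bar{z}_1\text{ in }U_2^{1,2}]$ and $(2f_{020}^{11}\bar{z}_1)\cdot[\text{coeff of }z_1^{2}\text{ in }U_2^{1,2}]$, substitute the eigenvalue denominators, combine the first two, multiply by $\tfrac{3}{2}$, and recover exactly $G_{210}^{21}$. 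The same pattern — enumerate the at-most-four products of monomials that yield the target cubic, substitute the denominators from the inverse of $M_2^1$, multiply by $\tfrac{3}{2}$ — gives $G_{102}^{21}$, $G_{111}^{23}$ and $G_{003}^{23}$. The only real obstacle is keeping this combinatorial bookkeeping straight across the three components of $D_z f_2^1$ and $U_2^1$; the orthogonality vanishings noted above dramatically shorten each sum, and once the surviving quadratic coefficients are tabulated the match with the stated formulas is immediate.
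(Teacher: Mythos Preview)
Your proposal is correct and follows essentially the same route as the paper: both invert $M_2^1$ monomial-by-monomial (using that its eigenvalue on $z_1^{m}\bar z_1^{n}z_2^{k}e_j$ is $(m-n)\mathrm{i}\omega_0-\lambda_j$) to obtain $U_2^1(z,0)$ explicitly, then differentiate $f_2^1(z,0,0)$ and multiply, collecting the four resonant cubic coefficients. Your only addition is to spell out the orthogonality vanishings $f_{101}^{11}=f_{011}^{11}=f_{200}^{13}=\cdots=0$ up front, which the paper establishes earlier in the proof of Lemma~\ref{lemma1} and then uses silently when writing down $U_2^1$.
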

\begin{proof} 
	Since $M_2^1 U_2^{1}(z,0)=\mathbf{P}_{\mathrm{Im} (M_{2}^{1})}\comp{f_2^1(z,0,0)},$ we obtain the unique $U_2^{1}(z,0)\in \mathrm{Ker} (M_{2}^{1})^{\mathrm{c}}$ has the form
	\begin{equation}
	U_2^{1}(z,0) =  \frac{1}{\mathrm{i}\omega_{0}}\left(
	\begin{array}{c}
	f_{200}^{11} z_{1}^2-f_{110}^{11} z_{1} {\bar{z}_1}-\frac{1}{3} f_{020}^{11} {\bar{z}_1}^2
	-f_{002}^{11} z_{2}^2\\   
	\frac{1}{3} f_{200}^{12} z_{1}^2+f_{110}^{12} z_{1} {\bar{z}_1}
	-f_{020}^{12} {\bar{z}_1}^2+f_{002}^{12} z_{2}^2    \\
	f_{101}^{13} z_{1} z_{2}-f_{011}^{13} {\bar{z}_1} z_{2}
	\end{array}\right).
	\end{equation}
	Rewritten $f_2^1(z,0,0) = f_{200}^1z_1^2+f_{110}^1z_1{\bar{z}_1}+f_{101}^1z_1 z_{2}+f_{020}^1{\bar{z}_1}^2+f_{011}^1{\bar{z}_1}z_{2}+f_{002}^1z_{2}^2.$
	Taking the derivative with respect to $z$ and acting on $U_2^{1}(z,0)$, the proof is completed.
\end{proof}

\begin{lemma} \label{lemmaG3}
	Assume that for system \eqref{A}, the hypothesis (\textbf{H}2) is valid. Then
	\begin{equation} \label{G3}
	\begin{aligned}
	\mathbf{P}_{\mathrm{Im}(M_{3}^{1})^{\mathrm{c}}}\comp G_3(z) =\left( \begin{array}{c}
	G_{210}^{31}z_1^2{\bar{z}_1}+G_{102}^{31}z_1z_{2}^2\\
	\overline{G_{210}^{31}}{\bar{z}_1}{\bar{z}_1}z_{2}+\overline{G_{102}^{31}}{\bar{z}_1}z_{2}^2\\
	G_{111}^{33}z_1{\bar{z}_1}z_{2}+G_{003}^{33}z_{2}^3
	\end{array}\right)
	\end{aligned}
	\end{equation}	
	with 
	\begin{equation} \label{G33}
	\begin{aligned}
	G_{210}^{31} =& \frac{3}{2}\psi_1(0)[S_{yz_1}(\langle h_{110}(\theta)\beta_{n_{1}},\beta_{n_{1}}\rangle)  +  S_{y{\bar{z}_1}}(\langle h_{200}(\theta)\beta_{n_{1}},\beta_{n_{1}}\rangle)],\\
	G_{102}^{31} =& \frac{3}{2}\psi_1(0)[S_{yz_1}(\langle h_{002}(\theta)\beta_{n_{1}},\beta_{n_{1}}\rangle)  +  S_{yz_{2}}(\langle h_{101}(\theta)\beta_{n_{2}},\beta_{n_{1}}\rangle)],\\
	G_{111}^{33} =& \frac{3}{2}\psi_2(0)[S_{yz_1}(\langle h_{011}(\theta)\beta_{n_{1}},\beta_{n_{2}}\rangle)  +  S_{y{\bar{z}_1}}(\langle h_{101}(\theta)\beta_{n_{1}},\beta_{n_{2}}\rangle)\\&+S_{yz_{2}}(\langle h_{110}(\theta)\beta_{n_{2}},\beta_{n_{2}}\rangle)],\\
	G_{003}^{33} =&\frac{3}{2}\psi_2(0)[S_{yz_{2}}(\langle h_{002}(\theta)\beta_{n_{2}},\beta_{n_{2}}\rangle)].
	\end{aligned}
	\end{equation}	
	with $S_{yz_i}$ $(i=1,2)$, $S_{y\bar{z}_1}$ and $\langle h_{mnk}(\theta)\beta_{n_{i}},\beta_{n_{j}}\rangle$ $(m+n+k=2,i,j=1,2)$ are given as in Theorem \ref{th2}.
\end{lemma}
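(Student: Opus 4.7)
The plan is to compute $D_y f_2^1(z,0,0)\,U_2^2(z,0)$ explicitly as a cubic polynomial in $z$, multiply by $3/2$, then project onto the resonant basis $\mathrm{Im}(M_3^1)^{\mathrm{c}}$ from \eqref{ImM_3^1^c} and collect coefficients.

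First, I would exploit the explicit form of $F_0^{(2)}$ given in Section~3: its only $y$-dependent part consists of the bilinear terms $\sum_i [F_{y_i(\theta)z_1} y_i(\theta) z_1 \beta_{n_{1}} + F_{y_i(\theta)\bar{z}_1} y_i(\theta)\bar{z}_1\beta_{n_{1}} + F_{y_i(\theta)z_2} y_i(\theta) z_2 \beta_{n_{2}}]$, with $\theta\in\{0,-1\}$ implicit in the notation $y_i(\theta)$. Differentiating in $y$ and substituting $U_2^2$ produces
\begin{equation*}
D_y F_0^{(2)}(z,0,0)\,U_2^2 \;=\; S_{yz_1}(U_2^2)\,z_1\,\beta_{n_{1}} \;+\; S_{y\bar{z}_1}(U_2^2)\,\bar{z}_1\,\beta_{n_{1}} \;+\; S_{yz_2}(U_2^2)\,z_2\,\beta_{n_{2}},
\end{equation*}
with $S_{yz_1}, S_{yz_2}, S_{y\bar{z}_1}$ exactly the operators from Theorem~\ref{th2}, acting pointwise in $x$.

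Next, I would substitute the quadratic expansion $U_2^2(\theta) = \sum_{m+n+k=2} h_{mnk}(\theta)\, z_1^m \bar{z}_1^n z_2^k$ into each $S$-operator; by linearity this produces a cubic polynomial in $z$ with $X_{\mathbb{C}}$-valued coefficients. Taking the inner product with $\beta_{n_{1}}$ (resp.\ $\beta_{n_{2}}$) and premultiplying by $\Psi(0)$ yields the entries of $D_y f_2^1(z,0,0)U_2^2$. The crucial observation is that because the coefficient vectors $F_{y_j(\theta)z_i}$ are independent of $x$, each $S$-operator commutes with the spatial inner product against the $\beta_n$'s:
\begin{equation*}
\langle S_{yz_i}(h_{mnk})\,\beta_{n_{j}},\,\beta_{n_{k}}\rangle \;=\; S_{yz_i}\bigl(\langle h_{mnk}\,\beta_{n_{j}},\,\beta_{n_{k}}\rangle\bigr),
\end{equation*}
and analogously for $S_{y\bar{z}_1}$. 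This identity reduces the expression to $S$-operators acting on the $\mathbb{C}^2$-valued mode projections that already appear in Theorem~\ref{th2}.

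Finally, I would apply $\mathbf{P}_{\mathrm{Im}(M_3^1)^{\mathrm{c}}}$ by isolating only the six resonant cubic monomials. The combinatorics is forced by matching each $h_{mnk}$ with the extra $z$-factor it acquires: for the $z_1^2\bar{z}_1$ entry only $S_{yz_1}(h_{110})\!\cdot\!z_1$ and $S_{y\bar{z}_1}(h_{200})\!\cdot\!\bar{z}_1$ survive (both paired with $\beta_{n_{1}}$, projected against $\beta_{n_{1}}$); for $z_1 z_2^2$ only $S_{yz_1}(h_{002})\!\cdot\!z_1$ (with $\beta_{n_{1}}$) and $S_{yz_2}(h_{101})\!\cdot\!z_2$ (with $\beta_{n_{2}}$); for $z_1\bar{z}_1 z_2$ the three terms $S_{yz_1}(h_{011})$, $S_{y\bar{z}_1}(h_{101})$, $S_{yz_2}(h_{110})$; and for $z_2^3$ only $S_{yz_2}(h_{002})\!\cdot\!z_2$. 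Conjugate entries follow automatically by complex conjugation of the first two rows. Multiplying by the prefactor $3/2$ from the definition of $G_3(z)$ and attaching the $\psi_1(0)$ or $\psi_2(0)$ from $\Psi(0)$ produces precisely the stated formulas for $G_{210}^{31}, G_{102}^{31}, G_{111}^{33}, G_{003}^{33}$. The main obstacle is not analytic but organizational: carefully tracking which $h_{mnk}$ pairs with which $z$-factor to yield each resonant monomial, while keeping the correct spatial labels $\beta_{n_{1}}$ versus $\beta_{n_{2}}$ on both the $h_{mnk}$ (coming from its original forcing by $F_0^{(2)}$) and the outer projection.
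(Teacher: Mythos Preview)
Your combinatorial derivation of the projection $\mathbf{P}_{\mathrm{Im}(M_3^1)^{\mathrm{c}}}\comp G_3(z)$ is correct and matches the paper's first step: you identify $D_yF_0^{(2)}(z,0,0)(\varphi)=S_{yz_1}(\varphi)z_1\beta_{n_1}+S_{y\bar z_1}(\varphi)\bar z_1\beta_{n_1}+S_{yz_2}(\varphi)z_2\beta_{n_2}$, substitute the quadratic expansion of $U_2^2$, and pair each $h_{mnk}$ with the correct extra $z$-factor and spatial label. Your remark that the $S$-operators commute with the spatial inner product (because the coefficients $F_{y_j(\theta)z_i}$ are $x$-independent) is exactly what the paper uses, though it leaves this implicit.

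There is, however, a substantive omission. You treat the mode projections $\langle h_{mnk}(\theta)\beta_{n_i},\beta_{n_j}\rangle$ as already established by Theorem~\ref{th2} and simply plug them in. But in the paper's logical order, Lemmas~\ref{lemma1}--\ref{lemmaG4} together constitute the proof of Theorem~\ref{th2}; the explicit formulas for these projections are \emph{derived} inside the proof of this very lemma. The paper expands $h_{mnk}(\theta)=\sum_{j\ge0}\langle h_{mnk}(\theta),\beta_j\rangle\beta_j$, substitutes into the defining system \eqref{thetaneq0}--\eqref{theta=0}, solves the resulting ODE in $\theta$ together with the boundary condition at $\theta=0$ mode by mode (inverting matrices such as $[2\mathrm{i}\omega_0-L_0(e^{2\mathrm{i}\omega_0\cdot}I_d)]$ for $h_{200}$), and then invokes Theorem~\ref{U22} to confirm that the resulting functions lie in $\mathcal Q_1$ without further constraints. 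This analytic computation---which the paper calls ``too tedious'' and illustrates only for $h_{200}$---is the real work of the lemma, and your proposal skips it entirely. As written, your argument is circular: it appeals to formulas in Theorem~\ref{th2} whose justification is precisely what Lemma~\ref{lemmaG3} is supposed to supply.
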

\begin{proof}
	First of all, we calculate the Fr\'{e}chet derivative $D_yf_2^1(z,0,0):\mathcal{Q}_{1} \rightarrow X_{\mathbb{C}}$. Since
	\begin{equation*}
	\begin{aligned}
	F_0^{(2)}(z,y,0)=& 
	\sum\limits_{i=1}^{2}\left[F_{y_i(\theta)z_1}y_i(\theta)z_1\beta_{n_{1}}+F_{y_i(\theta)\bar{z}_1}y_i(\theta)\bar{z}_1\beta_{n_{1}}+F_{y_i(\theta)z_2}y_i(\theta)z_2\beta_{n_{2}}\right]\\
	&+\sum\limits_{m+n+k=2}F_{mnk}z_1^m\bar{z}_1^nz_2^k\beta_{n_{1}}^{m+n}\beta_{n_{2}}^k+O(y^2),
	\end{aligned}
	\end{equation*}
	we have $$D_yF_0^{(2)}(z,0,0)(\varphi) = S_{yz_1}(\varphi)z_1\beta_{n_{1}}+S_{y{\bar{z}_1}}(\varphi)\bar{z}_1\beta_{n_{1}}+S_{yz_{2}}(\varphi)z_{2}\beta_{n_{2}},$$
	with $S_{yz_i} \,(i=1,2),S_{y\bar{z}_1}$  are linear operators from $ \mathcal{Q}_{1}$ to $X_{\mathbb{C}}$ and given by
	\begin{equation*}
	\begin{aligned}
	S_{yz_i}(\varphi) =& (F_{y_1(0)z_i},\;F_{y_2(0)z_i})\varphi(0)+(F_{y_1(-1)z_i},\; F_{y_2(-1)z_i})\varphi(-1),\\
	S_{y\bar{z}_1}(\varphi) =& (\overline{F_{y_1(0){z}_1}},\;\overline{F_{y_2(0){z}_1}})\varphi(0)+(\overline{F_{y_1(-1){z}_1}},\; \overline{F_{y_2(-1){z}_1}})\varphi(-1),
	\end{aligned}
	\end{equation*}
	Moreover, we obtain
	\begin{equation*} 
	\begin{aligned}
	&D_yf_2^{1}(z,0,0)(\varphi) = \Psi(0)\left(
	\begin{array}{c}
	\langle D_yF_0^{(2)}(z,0,0)(\varphi),\beta_{n_{1}}\rangle\\
	\langle D_yF_0^{(2)}(z,0,0)(\varphi),\beta_{n_{2}}\rangle
	\end{array}\right).
	%\\=&
	%\left(
	%\begin{array}{c} \psi_{1}(0)[S_{yz_1}(\langle\varphi\beta_{n_{1}},\beta_{n_{1}}\rangle)z_1\!+\!S_{y{\bar{z}_1}}(\langle\varphi\beta_{n_{1}},\beta_{n_{1}}\rangle)\bar{z}_1\!+\!S_{yz_{2}}(\langle\varphi\beta_{n_{2}},\beta_{n_{1}}\rangle)z_{2}]\\
	%\bar{\psi}_{1}(0)[S_{yz_1}(\langle\varphi\beta_{n_{1}},\beta_{n_{1}}\rangle)z_1\!+\!S_{y{\bar{z}_1}}(\langle\varphi\beta_{n_{1}},\beta_{n_{1}}\rangle)\bar{z}_1\!+\!S_{yz_{2}}(\langle\varphi\beta_{n_{2}},\beta_{n_{1}}\rangle)z_{2}]\\
	%\psi_{2}(0)[S_{yz_1}(\langle\varphi\beta_{n_{2}},\beta_{n_{1}}\rangle)z_1\!+\!S_{y{\bar{z}_1}}(\langle\varphi\beta_{n_{1}},\beta_{n_{2}}\rangle)\bar{z}_1\!+\!S_{yz_{2}}(\langle\varphi\beta_{n_{2}},\beta_{n_{2}}\rangle)z_{2}]\\
	%\end{array}\right).\\
	\end{aligned}
	\end{equation*}	
	
	Next, we calculate $U_2^2(\cdot)\in V_{2}^{3}(\mathcal{Q}_{1})$. In fact, since $\mathrm{Im}(M_{3}^{1})^{\mathrm{c}}$ spanned by 
	\eqref{ImM_3^1^c}, we just need to calculate
	\begin{equation*}
	\begin{split}
	&{\langle h_{200}(\theta)\beta_{n_1},\beta_{n_1} \rangle},\quad
	{\langle h_{110}(\theta)\beta_{n_1},\beta_{n_1} \rangle},\quad
	{\langle h_{110} (\theta)\beta_{n_2},\beta_{n_2} \rangle},\quad
	{\langle h_{101} (\theta)\beta_{n_2},\beta_{n_1} \rangle},\\
	&{\langle h_{011} (\theta)\beta_{n_1},\beta_{n_2} \rangle},\quad
	{\langle h_{002} (\theta)\beta_{n_1},\beta_{n_1} \rangle},\quad
	{\langle h_{002} (\theta)\beta_{n_2},\beta_{n_2} \rangle}.
	\end{split}
	\end{equation*}
	The process is too tedious to be given here. We just take ${\langle h_{200}(\theta)\beta_{n_1},\beta_{n_1} \rangle}$ as an example, and the same method can be used to others. 
	
	For $h_{200}(\theta)=\sum_{j\geq 0 }\langle h_{200}(\theta),\beta_j \rangle \beta_j,$ it is obvious that
	\begin{equation*}
	\langle h_{200}(\theta)\beta_{n_1},\beta_{n_1} \rangle=\frac{1}{\sqrt{l\pi}}\langle h_{200}(\theta),\beta_{n_1} \rangle.
	\end{equation*}
	Solving $h_{200}(\theta)$ form \eqref{thetaneq0} and \eqref{theta=0}, we have when $\theta\in[-1,0)$,
	\begin{equation*}\label{h200theta}
	h_{200}(\theta)\!=\!e^{2\mathrm{i}\omega_{0}\theta}h_{200}(0)-\frac{(e^{\mathrm{i}\omega_{0}\theta}\!\!-\!e^{2\mathrm{i}\omega_{0}\theta})}{{\mathrm{i}\omega_{0}}}
	f_{200}^{1,1}\beta_{n_1}\!\phi_1(0)-\frac{(e^{\mathrm{-i}\omega_{0}\theta}\!\!-\!e^{2\mathrm{i}\omega_{0}\theta})}{3{\mathrm{i}\omega_{0}}}f_{200}^{1,2}\beta_{n_1}\!\bar{\phi}_1(0).\qquad
	\end{equation*}
	And when $\theta = 0$,
	\begin{equation*}\label{h200}
	(\!-\!2{\mathrm{i} \omega_0}\!+\!D_0\Delta)h_{200}(0)\!+\!L_0(h_{200})=
	f_{200}^{1,1}\beta_{n_1}\!\phi_1(0)+f_{200}^{1,2}\beta_{n_1}\!\bar{\phi}_1(0)+f_{200}^{1,3}\beta_{n_2}\!\phi_2(0)-\!F_{200}\!\beta_{n_1}^2.
	\end{equation*}
	Combine with $[\mathrm{i} \omega_0-L_0(e^{\mathrm{i}\omega_0\cdot} I_d)]{\phi}_1(0) = 0$,
	%\begin{equation*}
	%[\mathrm{i} \omega_0-L_0(e^{\mathrm{i}\omega_0\cdot} I_d)]{\phi}_1(0) = 0,\qquad
	%[\mathrm{-i} \omega_0-L_0(e^{\mathrm{-i}\omega_0\cdot} I_d)]\bar{\phi}_1(0) = 0,
	%\end{equation*}
	we can obtain
	\begin{equation*}
	{\langle h_{200}(0),\beta_{n_1} \rangle}= \!-\!\left[\frac{1}{\mathrm{i}\omega_0}f_{200}^{1,1}\phi_1(0)\!\!+\!\!\frac{1}{3\mathrm{i}\omega_0}f_{200}^{1,2}\bar{\phi}_1(0)\right] \!\!-\!\!\frac{1}{\sqrt{l\pi}}\left[-2\mathrm{i} \omega_0\!\!+\!\!L_0(e^{\mathrm{2i}\omega_0\cdot} I_d)\right]^{-1}\!F_{200},
	\end{equation*}
	and eventually get
	\begin{equation*}
	{\langle h_{200}(\theta)\beta_{n_1},\beta_{n_1} \rangle} 
	\!=\!\!\frac{e^{2\mathrm{i}\omega_{0}\theta}}{l\pi}\left[2\mathrm{i} \omega_0\!\!-\!\!L_0(e^{\mathrm{2i}\omega_0\cdot}I_d)\right]^{-1}F_{200}\!-\!\frac{1}{\mathrm{i}\omega_{0}\sqrt{l\pi}}[f_{200}^{1,1}\phi_1(\theta)\!\!+\!\!\frac{1}{3}f_{200}^{1,2}\bar{\phi}_1(\theta)].
	\end{equation*}
	From the proof of Theorem \ref{U22}, it is clear that $ h_{200}(\theta)\in\mathcal{Q}_1.$  In other words, we don't need to consider the condition
	$\left( \psi_1(s), {\langle h_{200}(\theta),\beta_{n_{1}}\rangle} \right)_1=0,$ since it naturally satisfies. Hence the conclusion is proved.
\end{proof}

\begin{lemma}\label{lemmaG4}
	Assume that for system \eqref{A}, the hypothesis (\textbf{H}2) is valid. Then $G_4(z) = 0.$
\end{lemma}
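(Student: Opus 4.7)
The plan is to observe that $G_4(z)$ depends on $g_2^1(z,0,0)$, i.e., on the second-order normal form evaluated at zero perturbation parameter, and to show that this quantity vanishes identically under the Turing-Hopf hypothesis $(\textbf{H}2)$ with $n_1=0$, $n_2\neq 0$.

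Concretely, I would invoke Lemma \ref{lemma1}, which gives the explicit form of $g_2^1(z,0,\alpha)$. Every surviving component there is a monomial containing a factor $\alpha_1$ or $\alpha_2$; no purely $z$-dependent monomial appears. The underlying reason, already extracted in the proof of Lemma \ref{lemma1}, is that among the basis elements of $\mathrm{Im}(M_2^1)^{c}$ listed in \eqref{ImM_2^1^c} the three purely quadratic monomials in $z$, namely $z_1 z_2 e_1$, $\bar z_1 z_2 e_2$, and $z_2^2 e_3$, all have vanishing coefficients $f_{101}^{11}=f_{110}^{13}=f_{002}^{13}=0$. This is a direct consequence of the orthogonality relations
\begin{equation*}
\langle \beta_{n_1}\beta_{n_2},\beta_{n_1}\rangle = \langle \beta_{n_1}^2,\beta_{n_2}\rangle = \langle \beta_{n_2}^2,\beta_{n_2}\rangle = 0,
\end{equation*}
valid under $(\textbf{H}2)$ because $n_1=0$ and $n_2\neq 0$. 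Hence setting $\alpha=(\alpha_1,\alpha_2)=0$ in the formula of Lemma \ref{lemma1} yields $g_2^1(z,0,0)\equiv 0$.

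Given this, the definition $G_4(z) = \tfrac{3}{2}D_z U_2^1(z,0)\,g_2^1(z,0,0)$ immediately gives $G_4(z)=0$, regardless of the explicit form of $U_2^1(z,0)$ computed in the proof of Lemma \ref{lemmaG2}. No further computation is required.

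There is essentially no obstacle here: the whole lemma is a corollary of the structure of $\mathrm{Im}(M_2^1)^{c}$ combined with the Turing-Hopf orthogonality of the eigenfunctions. The only thing worth emphasizing in the write-up is that the vanishing of the purely $z$-monomials in $g_2^1$ is specific to the mode pairing $(n_1,n_2)=(0,n_2)$ with $n_2\neq 0$; in a different configuration (e.g.\ double Hopf with interacting modes of the same parity) the analogous coefficients need not vanish and $G_4$ would in general contribute to the third-order normal form.
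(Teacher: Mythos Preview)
Your proposal is correct and follows essentially the same approach as the paper: invoke Lemma~\ref{lemma1} to conclude that $g_2^1(z,0,0)\equiv 0$ (since every surviving term carries a factor $\alpha_i$), whence $G_4(z)=\tfrac{3}{2}D_zU_2^1(z,0)\,g_2^1(z,0,0)=0$. The paper's proof is a single line stating exactly this; your additional explanation of the underlying orthogonality relations and the remark about the specificity to the $(n_1,n_2)=(0,n_2\neq 0)$ configuration are accurate elaborations but not required.
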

\begin{proof} From Lemma \ref{lemma1}, we have
	$G_4(z) = \frac{3}{2}D_{z}U_{2}^{1}(z,0)g_{2}^{1}(z,0,0)=0,$ 
\end{proof}
\begin{proof}[{\bf Proof of Theorem \ref{th2}}] 
	According to  Lemma \ref{lemmaG1}-\ref{lemmaG4}, we have the cubic terms of the normal form $\dot{z}=Bz+\frac{1}{2!}g_{2}^{1}(z,0,\alpha)+\frac{1}{3!}g_{3}^{1}(z,0,0)+h.o.t.$ near the Turing-Hopf singularity $\mu=\mu_0$ is   
	\begin{equation}\label{g31}
	g_3^1(z,0,0) = \left(
	\begin{array}{c}
	g_{210}^{11} z_1^2{\bar{z}_1} + g_{102}^{11}z_1z_{2}^2 \\
	\overline{g_{210}^{11}}z_1{\bar{z}_1^2}+\overline{g_{102}^{11}}{\bar{z}_1}z_{2}^2 \\
	g_{111}^{13}z_1{\bar{z}_1}z_{2}+g_{003}^{13}z_{2}^3	\end{array}\right)
	\end{equation}
	with $g_{210}^{11}$, $g_{102}^{11}$, $g_{111}^{13}$, $g_{003}^{13}$ are given as in Theorem \ref{th2}.
	Combine with  Lemma \ref{lemma1}, we complete the proof.
\end{proof}

\section{Spatiotemporal patterns}
In this section, we give a further analysis to the normal forms up to the third order. And show the possible spatiotemporal patterns of system \eqref{A} near the Turing-Hopf bifurcation point $\mu=\mu_0$.
First of all, we make a further assumption:\\
(\textbf{H}3) At the Turing-Hopf bifurcation points, in addition to the pair of pure imaginary eigenvalues and the zero eigenvalue,  all other eigenvalues of \eqref{eqC} have strictly negative real parts.

Taking the cylindrical coordinate transformation in \eqref{eqNF3}
\begin{equation*}
\begin{aligned}
z_{1}=\mathcal{R}\cos\Theta+\mathrm{i}\mathcal{R}\sin\Theta,\quad
{\bar{z}_1}=\mathcal{R}\cos\Theta-\mathrm{i}\mathcal{R}\sin\Theta,\quad
z_{2}=\mathcal{V},
\end{aligned}
\end{equation*}
%we get a equivalent system with the form
%%can be expressed as a amplitude system
%\begin{equation*}
%\begin{aligned}
%&\dot{\mathcal{R}}=\frac{1}{2}[\mathrm{Re}(f_{\alpha_{1}z_1}^{11})\alpha_{1}+\mathrm{Re}(f_{\alpha_{2}z_1}^{11})\alpha_{2}]\mathcal{R}+\frac{1}{6}\mathrm{Re}(g_{210}^{11})\mathcal{R}^{3}+\frac{1}{6}\mathrm{Re}(g_{102}^{11})RV^{2},\\
%&\dot{\mathcal{V}}=\frac{1}{2}(f_{\alpha_{1}z_{2}}^{13}\alpha_{1}+f_{\alpha_{2}z_{2}}^{13}\alpha_{2})\mathcal{V}+\frac{1}{6}g_{111}^{13}\mathcal{R}^{2}\mathcal{V}+\frac{1}{6}g_{003}^{13}\mathcal{V}^{3},\\
%&\dot{\Theta} = \omega_0+\frac{1}{2}[\mathrm{Im}(f_{\alpha_{1}z_1}^{11})\alpha_{1}+\mathrm{Im}(f_{\alpha_{2}z_1}^{11})\alpha_{2}]+\frac{1}{6}\mathrm{Im}(g_{210}^{11})\mathcal{R}^{2}+\frac{1}{6}\mathrm{Im}(g_{102}^{11})\mathcal{V}^{2}.
%\end{aligned}
%\end{equation*}
and
re-scaling  the variables with ${r}=\sqrt{\frac{|\mathrm{Re}(g_{210}^{11})|}{6}}\mathcal{R}$, ${v}=\sqrt{\frac{|g_{003}^{13}|}{6}}\mathcal{V}$, $\tilde{t}= t/\varepsilon$, $\varepsilon=\mathrm{Sign}(\mathrm{Re}(g_{210}^{11}))$.
%\begin{equation*}\label{change}
%{r}=\sqrt{\frac{|\mathrm{Re}(g_{210}^{11})|}{6}}\mathcal{R},\quad {v}=\sqrt{\frac{|g_{003}^{13}|}{6}}\mathcal{V},\quad
%\tilde{t}= t/\varepsilon
%\end{equation*} with $\varepsilon=\mathrm{Sign}(\mathrm{Re}(g_{210}^{11}))$. 
After removing the azimuthal component $\Theta(\cdot)$, a  equivalent planar system is obtained
\begin{equation}\label{rv}
\begin{aligned}
&\frac{dr}{d\tilde{t}}=r[\epsilon_{1}(\mu) + r^{2}+b v^{2}],\\
&\frac{dv}{d\tilde{t}}=v[\epsilon_{2}(\mu) + c r^{2} + d v^{2}].
\end{aligned}
\end{equation}
Here
\begin{equation*}
\begin{aligned} &\epsilon_{1}(\mu)=~\frac{\varepsilon}{2}\,\left[\mathrm{Re}(f_{\alpha_{1}z_1}^{11})(\mu_1-\mu_{0,1})+\mathrm{Re}(f_{\alpha_{2}z_1}^{11})(\mu_2-\mu_{0,2})\right],\\
&\epsilon_{2}(\mu)=~\frac{\varepsilon}{2}\,\left[f_{\alpha_{1}z_{2}}^{13}(\mu_1-\mu_{0,1})+f_{\alpha_{2}z_{2}}^{13}(\mu_2-\mu_{0,2})\right], \\ 
&b =~ \frac{\varepsilon\,\mathrm{Re}(g_{102}^{11})}{|g_{003}^{13}|} ,\quad
c =~ \frac{\varepsilon \,g_{111}^{13}}{|\mathrm{Re}(g_{210}^{11})|} ,\quad
d =~ \frac{\varepsilon \,g_{003}^{13}}{|g_{003}^{13}|}=\pm1.  
\end{aligned}
\end{equation*}
For system \eqref{rv}, there are at most four positive equilibrium points.
\begin{equation}
\begin{aligned}
&E_1 = (0,0),  \hspace{4cm} \mathrm{for~all}\;\; \epsilon_1, \epsilon_2,\\
&E_2 = (\sqrt{-\epsilon_1},0),\hspace{3.3cm} \mathrm{for}\; \epsilon_1<0,\\
&E_3 = (0,\sqrt{-\frac{\epsilon_2}{d}}),\hspace{3.2cm} \mathrm{for}\;\; \frac{\epsilon_2}{d}<0,\\
&E_4 = (\sqrt{\frac{b\epsilon_2-d\epsilon_1}{d-bc}},\sqrt{\frac{c\epsilon_1-\epsilon_2}{d-bc}}),\hspace{0.95cm}\mathrm{for}\;\frac{b\epsilon_2-d\epsilon_1}{d-bc},\frac{c\epsilon_1-\epsilon_2}{d-bc}>0.
\end{aligned}
\end{equation}
Depending on the different signs of $b,c,d$ and $d-bc$, there are 12 distinct types of unfolding for \eqref{rv}, the associated phase portraits could be found in \cite{Phillp1988,Wang2010Hopf}. 

We claim that the solution of the original system \eqref{A} restrict on center manifold is homeomorphic to 
%\begin{equation}
%\begin{split}
%W^{t}(\theta)=&\phi_{1}(\theta)z_1(t)\beta_{n_1}+\bar{\phi}_{1}(\theta)\bar{z}_1(t)\beta_{n_1}+\phi_{2}(\theta)z_{2}(t)\beta_{n_2},\\
%\end{split}
%\end{equation}
%i.e.,
\begin{equation}\label{R}
\begin{split}
W^{t}(\theta)
=&\mathcal{R}(t)\,[\phi_{1}(\theta)e^{\mathrm{i}\Theta(t)}+\bar{\phi}_{1}(\theta)e^{\mathrm{-i}\Theta(t)}]\beta_{n_1}+\phi_2(\theta)\mathcal{V}(t)\beta_{n_2}.
\end{split}
\end{equation}
On the basis of the center manifold emergence theorem given by \cite[chap.6]{carr2012applications} and \cite[chap.5]{Wu1996Theory}, the solutions of the original system \eqref{A}, which the initial functions in a neighborhood $V$ of $0$ in $\mathcal{C}$, are exponentially convergent to the homeomorphism of the attractors of the solutions restrict on center manifold \eqref{R}. That is
\begin{equation}\label{W}
U(t) \approx \mathcal{H}( W(t))+O(e^{-\gamma t}), ~~~~~~~~~\textit{as}~~ t \rightarrow \infty,
\end{equation} 
with $\mathcal{H}(\cdot)$ is an isomorphic mapping and $\gamma>0$.

Totally speaking, there are five possible attractors for \eqref{rv} which  corresponding to five different types of attractor for the original system \eqref{A}. The connection has been shown in Table \ref{tab1}.
\begin{table}[htbp]
	%	\tbl{\hspace{8.5cm}} 
	\caption{The correspondence between the planner and original system}
	\label{tab1}
	\centering
	{\begin{tabular}{|c|c|}
			\hline
			\bf{Planar system \eqref{rv}} & \bf{The original system (\ref{A}})\\
			\hline
			$E_1$         & Constant steady state $(0,0)$\\
			$E_2$         & Spatially homogeneous periodic solution\\
			$E_3$         & Non-constant steady state\\
			$E_4$         & Spatially non-homogeneous periodic solution\\
			Periodic solution & Spatially non-homogeneous quasi-periodic solution\\
			\hline
	\end{tabular}}
\end{table} 
In the following, we will give a detailed analysis to each attractor. 
In each case, we assume that the parameters $\mu\in\mathbb{R}^2$ are located in the appropriate area $S_i~(i=1,2,3,4,5)$, which near the bifurcation point $\mu_0$ and ensures the appearance of this situation. 
Without loss of generality, further assume $\varepsilon=1$.  
\vskip 0.2cm
\noindent\textbf{Case 1}. $E_1$ is a stable attractor of system $\eqref{rv}$, when $\mu\in S_1\subset\mathbb{R}^2$.
%There exists a area $S_1\subset\mathbb{R}^2$, such that $E_1$ is a stable attractor of system $\eqref{rv}$ when $\mu\in S_1$.

In this case, we assert that the zero equilibrium of the original system \eqref{A} is also a stable attractor for $\mu\in S_1$. In fact, when the initial value $(r_0,v_0)$ of \eqref{rv} is sufficiently close to the 0, we have
\begin{equation*}
(r,v)\rightarrow E_1, \hspace{1cm}\textit{as}~~ t\rightarrow \infty.
\end{equation*}
From \eqref{W}, there exists a neighborhood $Q_1$ of 0 in $\mathcal{C}$,  such that when the initial functions $(u_0,v_0)\in Q_1$, the corresponding solution
\begin{equation*}
U(t)\rightarrow (0,0),\hspace{1cm}\textit{as}~~t\rightarrow \infty.
\end{equation*}
Which means zero is a attractor of the original system \eqref{A}.
\vskip 0.2cm
\noindent\textbf{Case 2}. $E_2$ is a stable attractor of system $\eqref{rv}$, when  $\mu\in S_2\subset\mathbb{R}^2$.
% There exists a area $S_2\subset\mathbb{R}^2$, such that $E_2$ is a stable attractor of system $\eqref{rv}$ when $\mu\in S_2$.

The same as above, for system \eqref{A},  there exists a region $Q_2\in V$,  such that the solutions
\begin{equation*}
U(t)\rightarrow \mathcal{H}\left(\rho_2\left[\phi_{1}(0)e^{\mathrm{i}\omega_2 t}+\bar{\phi}_{1}(0)e^{\mathrm{-i}\omega_2 t}\right]\right), \hspace{1cm}\textit{as}~~t\rightarrow \infty,
\end{equation*}
when the initial functions $(u_0,v_0)\in Q_2$. Here $\rho_2 \sim \sqrt{{-6\epsilon_1 }/{|\mathrm{Re}(g_{210}^{11})|}}\beta_{n_{1}}$, $\omega_2\sim \omega_0+\frac{1}{2}[\mathrm{Im}(f_{\alpha_{1}z_1}^{11})(\mu_1-\mu_{0,1})+\mathrm{Im}(f_{\alpha_{2}z_1}^{11})(\mu_2-\mu_{0,2})]-{\epsilon_1\,\mathrm{Im}(g_{210}^{11}) }/{|\mathrm{Re}(g_{210}^{11})|}.$
That means, the original systems \eqref{A}  exists a stable spatially homogeneous periodic solution which is bifurcate from the zero equilibrium when $\mu\in S_2.$ 
\vskip 0.2cm
\noindent\textbf{Case 3}. $E_3$ is a stable attractor of system $\eqref{rv}$, when $\mu\in S_3\subset\mathbb{R}^2$. (see e.g. {\uppercase\expandafter{\romannumeral 4}}b,
{\uppercase\expandafter{\romannumeral 6}}a, {\uppercase\expandafter{\romannumeral 7}}a,  {\uppercase\expandafter{\romannumeral 7}}b in \cite[chap.7]{Phillp1988}).
%
%There exists a area $S_3\subset\mathbb{R}^2$, such that $E_3$ is a stable attractor of system $\eqref{rv}$ when $\mu\in S_3$ (see e.g. {\uppercase\expandafter{\romannumeral 4}}b,
%{\uppercase\expandafter{\romannumeral 6}}a, {\uppercase\expandafter{\romannumeral 7}}a,  {\uppercase\expandafter{\romannumeral 7}}b in \cite[chap.7]{Phillp1988}).

In this case, there exist a region $Q_3\in V$, such that the solutions of  \eqref{A},
\begin{equation*}
U(t)\rightarrow \mathcal{H}\left(h_3\cos(\frac{n_2}{l}x)\right), \hspace{1cm}\textit{as}~~t\rightarrow \infty,
\end{equation*}
when the initial functions $(u_0,v_0)\in Q_3$. Here $h_3 \sim \sqrt{\frac{-6\epsilon_2 }{d|g_{003}^{13}|}}\phi_2(0)$.
Due to the mirror symmetry of the system \eqref{rv}, there are two coexisting spatially inhomogeneous steady-state attractors for the original system \eqref{A}, which are stable and bifurcate from the zero equilibrium when $\mu\in S_3.$ 
\vskip 0.2cm
%${r}=\sqrt{\frac{|\mathrm{Re}(g_{210}^{11})|}{6}}R$, ${v}=\sqrt{\frac{|g_{003}^{13}|}{6}}V$ and  $\tilde{t}= t/\varepsilon$ with
%(\sqrt{\frac{b\epsilon_2-d\epsilon_1}{d-bc}},
%\sqrt{\frac{c\epsilon_1-\epsilon_2}{d-bc}})
\noindent\textbf{Case 4}. $E_4$ is a stable attractor of system $\eqref{rv}$, when $\mu\in S_4\subset\mathbb{R}^2$. (see e.g. {\uppercase\expandafter{\romannumeral 6}}a, {\uppercase\expandafter{\romannumeral 7}}a in \cite[chap.7]{Phillp1988}).
%There exists a area $S_4\subset\mathbb{R}^2$, such that $E_4$ is a stable attractor of system $\eqref{rv}$ when $\mu\in S_4$ (see e.g. {\uppercase\expandafter{\romannumeral 6}}a, {\uppercase\expandafter{\romannumeral 7}}a in \cite[chap.7]{Phillp1988}).

In the same way, there exist a region $Q_4\in V$, such that for the initial functions $(u_0,v_0)\in Q_4$, the solutions of  \eqref{A}
\begin{equation*}\label{timespace1}
U(t)\rightarrow \mathcal{H}\left(\rho_4\left[\phi_{1}(0)e^{\mathrm{i}\omega_4 t}+\bar{\phi}_{1}(0)e^{\mathrm{-i}\omega_4 t}\right]+h_4\cos(\frac{n_2}{l}x)\right), \quad\textit{as}~~t\rightarrow \infty,
\end{equation*}
with the parameters $\rho_4 \sim \sqrt{\frac{6(b\epsilon_2-d\epsilon_1) }{(d-bc)|\mathrm{Re}(g_{210}^{11})|}}\beta_{n_{1}}$,  $h_4 \sim \sqrt{\frac{6(c\epsilon_1-\epsilon_2) }{(d-bc)|g_{003}^{13}|}}\phi_2(0)$,   
$\omega_4\sim\omega_0+\frac{1}{2}[\mathrm{Im}(f_{\alpha_{1}z_1}^{11})(\mu_1-\mu_{0,1})+\mathrm{Im}(f_{\alpha_{2}z_1}^{11})(\mu_2-\mu_{0,2})]+\frac{\mathrm{Im}(g_{210}^{11})(b\epsilon_2-d\epsilon_1) }{(d-bc)|\mathrm{Re}(g_{210}^{11})|}+\frac{\mathrm{Im}(g_{102}^{11})(c\epsilon_1-\epsilon_2)}{(d-bc)|g_{003}^{13}|}.$
Due to mirror symmetry of the system \eqref{rv}, the original systems \eqref{A}  possesses two  coexisting spatially inhomogeneous periodic solutions, which are stable and bifurcate from the zero equilibrium when $\mu\in S_4.$ These solutions could be used to explain the existence of the spatiotemporal
patterns which has been proposed in \cite{Maini1997Spatial}.
The spatiotemporal 
patterns,  which have defined spatial wavelengths (waveforms) and oscillate periodically with time,
have been shown in Figure \ref{fig1}.

\begin{figure}[htbp]
	\centering                           
	\subfigure[$H_1(t,x)$]{       
		\begin{minipage}{0.48\linewidth} 
			\centering                                      
			\includegraphics[scale=0.2]{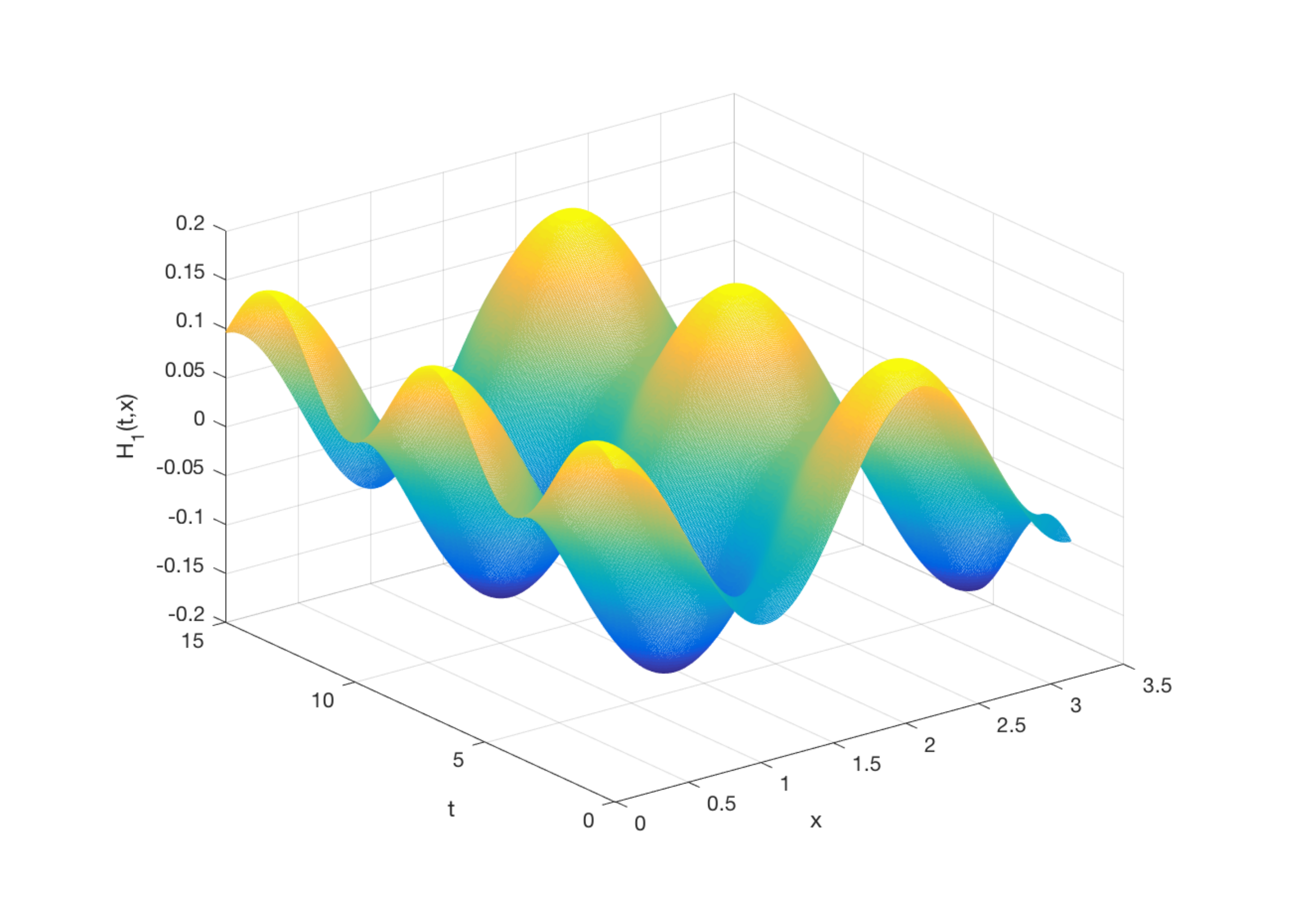}            
	\end{minipage}}
	\subfigure[$H_2(t,x)$]{                  
		\begin{minipage}{0.48\linewidth} 
			\centering                                     
			\includegraphics[scale=0.2]{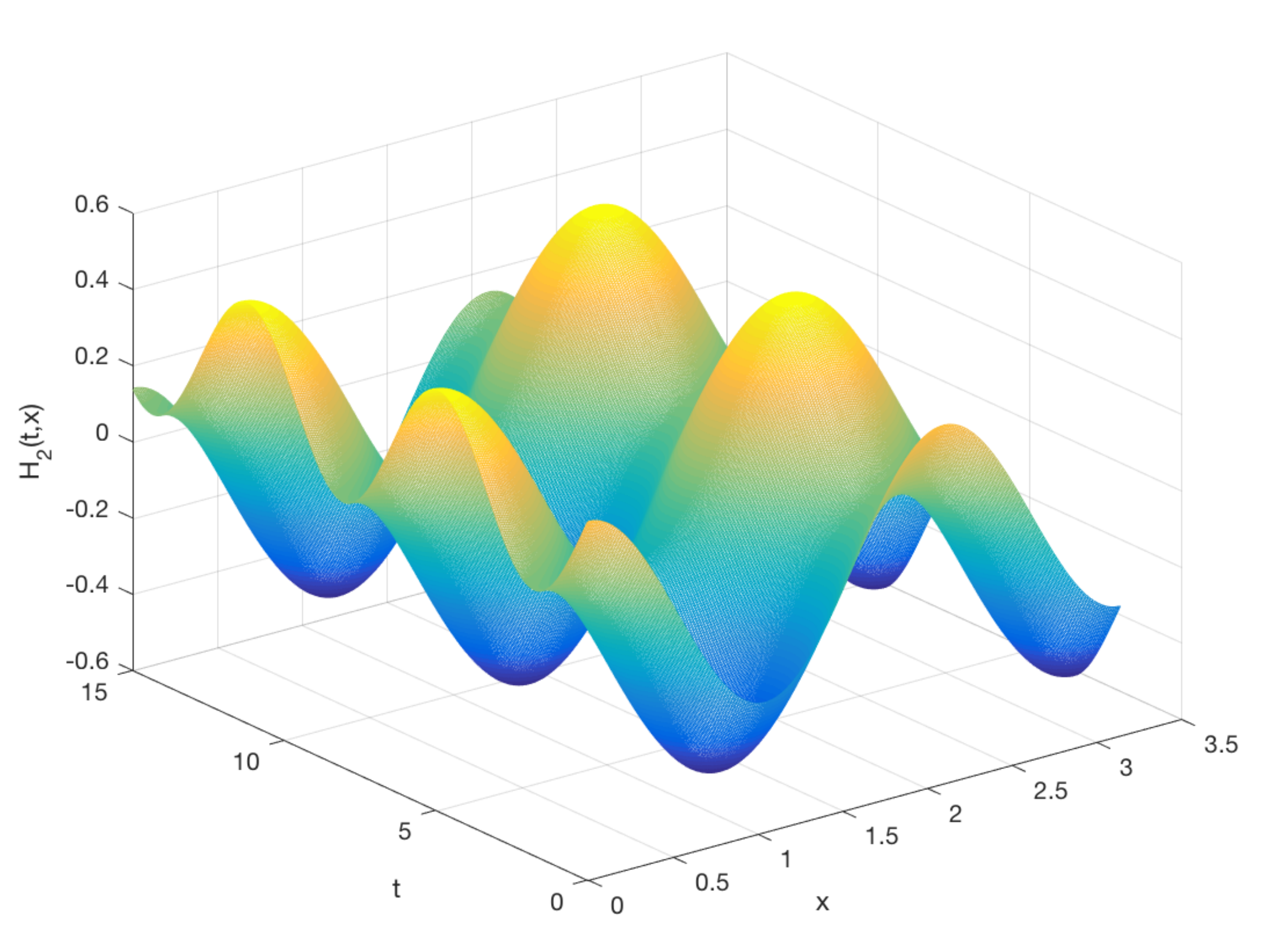}                
		\end{minipage}
	}\caption{The spatially non-homogeneous periodic attractors $H=(H_1,H_2)$, under the {Case 4}. Here $\rho_4 = 0.2,~\phi_1(0) = ( 0.1-0.1\mathrm{i},0.1+0.5\mathrm{i})^{\mathrm{T}}$, $h_4 = (0.1,0.3)^{\mathrm{T}}$, $w_4  =1$, $l=1$, $n_2 = 3$.} 
	\label{fig1}                                                    
\end{figure}

\noindent\textbf{Case 5}. 
There exists a area $S_5\subset\mathbb{R}^2$, such that $E_4$ of  \eqref{rv} loses its stability due to the occurrence of Hopf bifurcation. At the same time, a periodic solution is generated (see e.g. {\uppercase\expandafter{\romannumeral 6}}a, {\uppercase\expandafter{\romannumeral 7}}a in \cite[chap.7]{Phillp1988}).

This is the most complicated but also the most interesting one. The bifurcation analysis in \cite{Phillp1988} has shown,  when $d = -1$, $b>0$, $c<0$ and $d-bc>0$, a Hopf bifurcation can occur. The corresponding result is $E_4$ becomes unstable, and a periodic solution $E_4+(\rho\cos(\varpi t),\rho\sin(\varpi t))$ arises . Omit the tedious derivation, there is a region $Q_5\in V$, such that when the initial functions in $Q_5$, the solutions of $\eqref{A}$ 
\begin{equation*}\label{timespace2}
U(t)\rightarrow \mathcal{H}\left(\![\rho_4\!+\!\rho_5\cos(\varpi t)][\phi_{1}(0)e^{\mathrm{i}\omega_5 t}\!+\!\bar{\phi}_{1}(0)e^{\mathrm{-i}\omega_5 t}]\!+\![h_4\!+\!h_5\sin(\varpi t)]\cos(\frac{n_2}{l}x)\!\right),
\end{equation*}
as $t\rightarrow \infty$. Here $\rho_4,\rho_5$ are given as in case 4, $\rho_5 \sim \rho\sqrt{{6}/{|\mathrm{Re}(g_{210}^{11})|}}\beta_{n_{1}}$,  
$h_5 \sim \rho_4\sqrt{{6}/{|g_{003}^{13}|}}\phi_2(0)$, 
$\varpi\sim 2\sqrt{(b\epsilon_2-d\epsilon_1)(c\epsilon_1-\epsilon_2)/(d-bc)}$, 
$\omega_5\sim\omega_4$, $\rho$ is sufficiently small.
% \omega_0+\frac{1}{2}[\mathrm{Im}(f_{\alpha_{1}z_1}^{11})(\mu_1-\mu_{0,1})+\mathrm{Im}(f_{\alpha_{2}z_1}^{11})(\mu_2-\mu_{0,2})]-\frac{\epsilon_1\,\mathrm{Im}(g_{210}^{11})(b\epsilon_2-d\epsilon_1) }{(d-bc)|\mathrm{Re}(g_{210}^{11})|}.$
Speaking generally, the attractor of the original system \eqref{A} is a superposition of multiple periodic functions. Due to \eqref{W} and mirror symmetry, there are  two  coexisting spatially inhomogeneous quasi-periodic solutions for \eqref{A}, when $\mu\in S_5$.  which are stable and obtained by the secondary Hopf bifurcation. The patterns of such solutions are expected to be similar as the one demonstrated in Figure \ref{fig2}.  

{{These spatially non-homogeneous quasi-periodic solutions have the following features: 
		
		\textbf{(1)} The temporal pattern (for any fixed $x=x_0\in\Omega$) is quasi-periodic about time. Moreover, these time-quasi-periodic waves also change periodically about space variable $x$ (\textit{i.e.,}  $U(\cdot,x_0)$ is quasi-period about time, and there is a constant $T_1$ such that  $U(\cdot,x_0)=U(\cdot,x_0+T_1)$ for any $x_0\in\Omega$).
		
		\textbf{(2)} The spatial patterns (for any fixed $t=t_0>0$) change quasi-periodically (include amplitude, spatial waveform) with time (\textit{i.e.,} there is a constant $T_2$ such that  $U(t_0,\cdot)=U(t_0+T_2,\cdot)$ for any $t_0>0$).} }

\begin{figure}[htbp]
	\centering                           
	\subfigure[$H_1(t,x)$]{       
		\begin{minipage}{0.48\linewidth} 
			\centering                                      
			\includegraphics[scale=0.2]{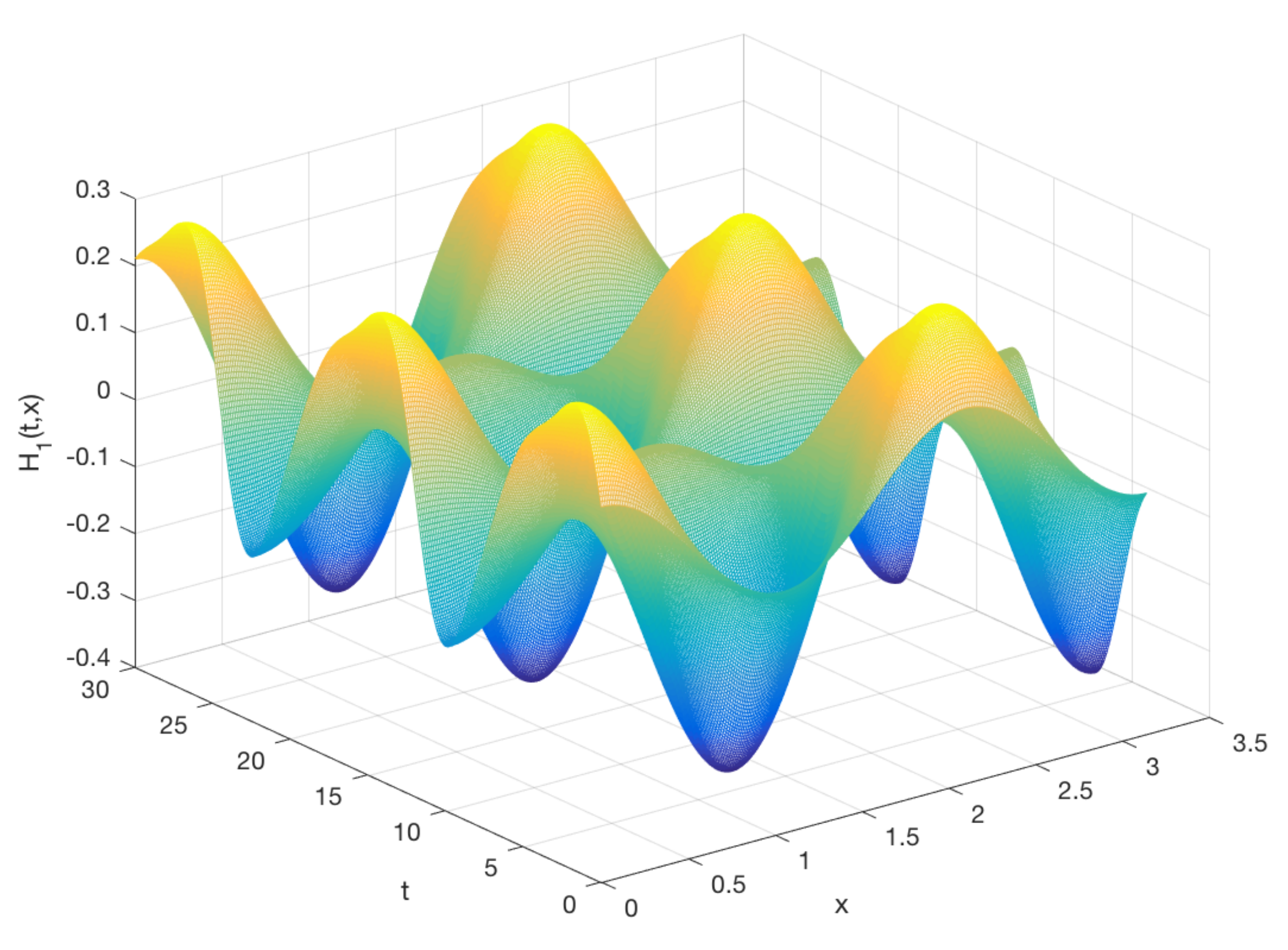}            
	\end{minipage}}
	\subfigure[$H_2(t,x)$]{                  
		\begin{minipage}{0.48\linewidth} 
			\centering                                     
			\includegraphics[scale=0.2]{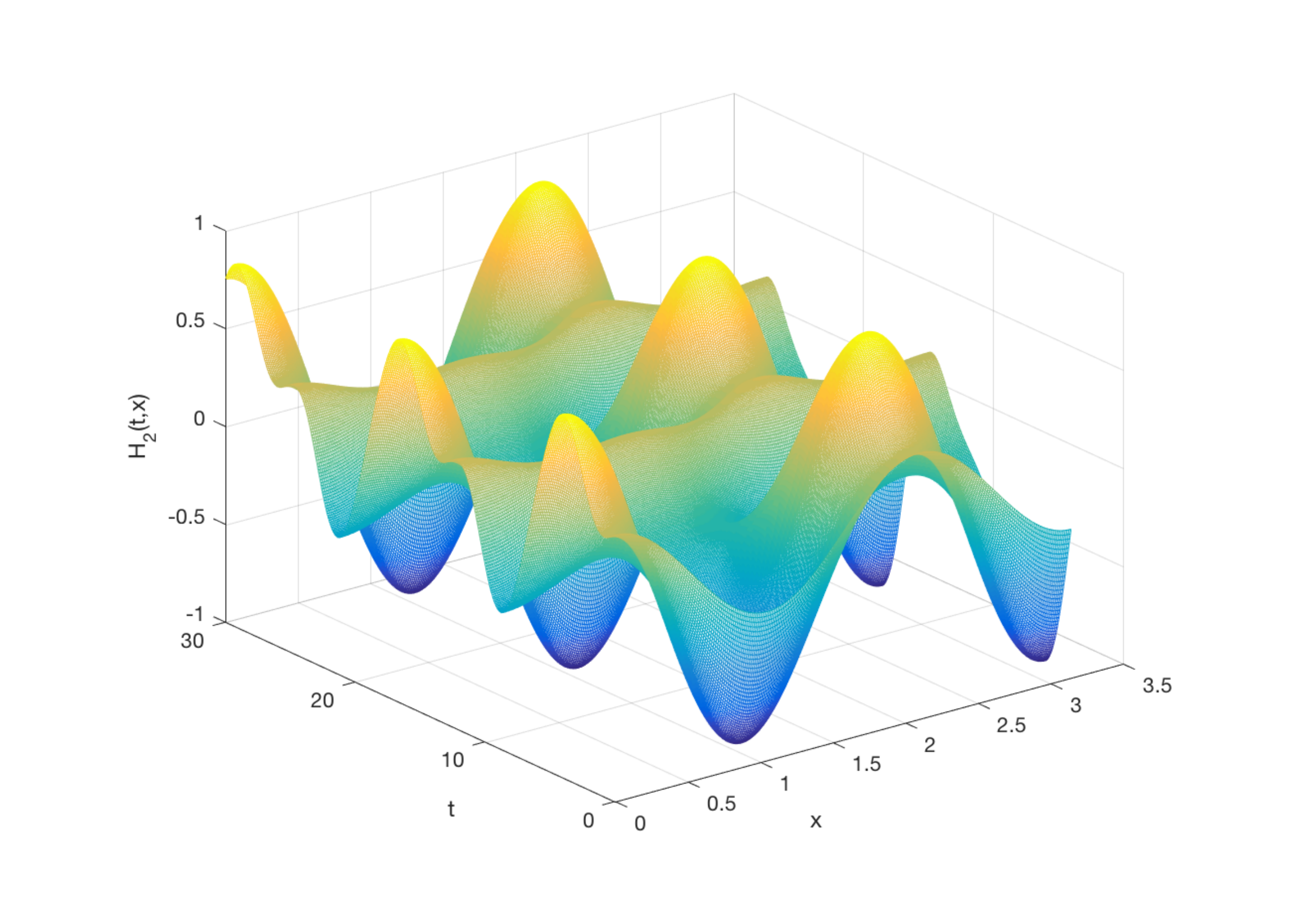}                
		\end{minipage}
	}
	\caption{The spatially non-homogeneous quasi-periodic attractors $H=(H_1,H_2)$, under the case 5. Here $\rho_4 = 0.2, \rho_5 = 0.1, \phi_1(0) = ( 0.1-0.1\mathrm{i},0.1+0.5\mathrm{i})^{\mathrm{T}}$, $h_4 = (0.1,0.3)^{\mathrm{T}}$, $h_5 = (0.2,0.5)^{\mathrm{T}}$, $w_5  =1$, $\varpi = 0.5$, $l=1$, $n_2 = 3$.	  	} 
	\label{fig2}                                                    
\end{figure}

\begin{remark}
	The above analysis shows that, the complex spatiotemporal inhomogeneous solutions can be bifurcated from the zero equilibrium, since the existence of Turing-Hopf bifurcation.  These spatially inhomogeneous periodic and quasi-periodic solutions, which mentioned in \textbf{Case 4} and \textbf{Case 5}, can't be explained by a simple Hopf bifurcation or a simple steady-state bifurcation. Turing-Hopf bifurcation provides a theoretical basis for the formation of spatiotemporal patterns.
\end{remark}

\section{Example and numerical simulation}

In this section, we do a brief study of the Turing-Hopf bifurcation in Holling-Tanner model with a discrete time delay (see, e.g. \cite{Kooij1997Limit,Saez1999Dynamics}),
\begin{equation}\label{eq5.1}
\left\{
\begin{aligned}
&\frac{\mathrm{d}u(t)}{\mathrm{d}t}\!-\!d_{1}\Delta u(t)\!=\!u(t)[1\!-\!u(t)]\!-\!\frac{au(t)v(t)}{u(t)+b},&\!&x\in(0,l\pi),t>0,&\\
&\frac{\mathrm{d}v(t)}{\mathrm{d}t}\!-\!d_{2}\Delta v(t)=rv(t)[1\!-\!\frac{v(t-\tau)}{u(t-\tau)}],&\!&x\in(0,l\pi),t>0,&\\
&\partial_{\eta}u\!=\!\partial_{\eta}v=0,&\!&x=0~\mathrm{or}~l\pi,t>0,&\\
&u(x,t)=\varphi(x,t), v(x,t)=\psi(x,t),&\!&x\in(0,l\pi),t\in[-\tau,0].&\\
\end{aligned}\right.
\end{equation}
In the following, we take $d_1 = 0.1,$ $d_2 = 10.0,$ $a = 1.0,$ $b = 0.1$, $l = 5.0$. And consider the effect of  $\tau$ and $r$ to the system \eqref{eq5.1}. The 3-orders truncated normal forms and some numerical simulations about the spatiotemporal patterns and the spatial patterns are given.

There are only one positive constant steady state $E_{1}=(0.270,0.270)$ in this system \eqref{eq5.1}. The linearied equation at $E_1$ is 
\begin{equation}\label{eq5.2}
\frac{\mathrm{d}}{\mathrm{d}t}U(t)=
{D}\Delta U(t)+{L}(r,\tau)(U_{t}).
\end{equation}
here ${D}=\mathrm{diag}~( d_{1}, d_{2})$, and ${L}(\tau,r)(\cdot):\mathcal{C}\rightarrow X_{\mathbb{C}} $ is a bounded linear operator given by
\begin{displaymath}
{L}(\tau,r)(\phi)={\left(\begin{array}{cc}
	0.2625 &-0.7298\\
	0  &0
	\end{array}\right)} \phi(0)+{\left(\begin{array}{cc}
	0&0\\
	r&-r
	\end{array}\right)} \phi(-\tau).
\end{displaymath}
The corresponding characteristic equations are
\begin{displaymath}\label{characteristic}
\begin{aligned}
&\lambda^{2}-{T_{n}}(\tau,r)\lambda+{D_{n}}(\tau,r)=0,&&n=0,1,2,\cdots&
\end{aligned}\eqno({5.3}_{n})
\end{displaymath}
with
\begin{equation*}
\begin{aligned}
&{T_{n}}(\tau,r)=0.2625-\dfrac{10.1n^{2}}{l^{2}}-re^{-\lambda\tau},&\\
&{D_{n}}(\tau,r)=\dfrac{n^{2}}{l^{2}}(\dfrac{n^{2}}{l^{2}}-2.625)+re^{-\lambda\tau}(\dfrac{0.1n^{2}}{l^{2}}+ 0.4673).&
\end{aligned}
\end{equation*}

\begin{proposition} Taking $d_1 = 0.1,$ $d_2 = 10.0,$ $a = 1.0,$ $b = 0.1$, $l=5.0$ in the Holiing-Tanner model with a discrete delay \eqref{eq5.1}. Then we have,
	\begin{enumerate}[(\romannumeral 1)]
		\item When $(\tau,r)=  (0.4567, 2.8646): = (\tau_0,r_0)$, the lineared equation \eqref{eq5.2} has a pair of conjugate pure imaginary eigenvalues $\pm 2.8899\,\mathrm{i}(mod(0))$ and a $0(mod(5))$ eigenvalue, all other eigenvalues have strictly negative real parts.
		\item At $(\tau_0,r_0) = (0.4567, 2.8646)$, the system \eqref{eq5.1} undergoes a Turing-Hopf bifurcation. 
	\end{enumerate}
\end{proposition}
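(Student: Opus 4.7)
The plan is to verify the two hypotheses of part (i) by studying the sequence of characteristic equations $(5.3_n)$ mode by mode, then use part (i) plus the transversality conditions to invoke hypothesis (\textbf{H}2) and conclude (ii) via Theorem \ref{Phi-Psi}. Throughout, the bifurcation parameter is $\mu=(\mu_1,\mu_2)=(\tau,r)-(\tau_0,r_0)$.

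First I would isolate the Turing (steady-state) branch. Setting $\lambda=0$ in $(5.3_n)$ yields $D_n(\tau,r)=\tfrac{n^2}{l^2}\bigl(\tfrac{n^2}{l^2}-2.625\bigr)+r\bigl(\tfrac{0.1\,n^2}{l^2}+0.4673\bigr)=0$, an equation independent of $\tau$. Scanning $n=0,1,2,\ldots$ with $l=5$, only $n=5$ admits a positive root $r$, giving $r_0=2.8646\ldots$; this fixes the second $k$ in Theorem \ref{Phi-Psi} as $n_2=5$ and produces the simple real eigenvalue $\gamma(\mu_0)=0$ of mode $5$. The transversality $\partial_r\gamma(\mu_0)\ne 0$ follows at once from $\partial D_5/\partial r = 0.1+0.4673>0$ and the implicit function theorem applied to $(5.3_5)$ at $\lambda=0$.

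Next I would locate the Hopf branch on mode $n_1=0$. Substituting $\lambda=\mathrm{i}\omega$ with $\omega>0$ into $(5.3_0)$ and separating real and imaginary parts gives the system
\begin{equation*}
-\omega^2 + r\,\omega\sin(\omega\tau) + 0.4673\,r\cos(\omega\tau)=0, \qquad
-0.2625\,\omega + r\,\omega\cos(\omega\tau) - 0.4673\,r\sin(\omega\tau)=0.
\end{equation*}
Eliminating the trigonometric terms by squaring and adding produces a quartic in $\omega^2$ whose positive root at $r=r_0$ is $\omega_0=2.8899\ldots$; back-substituting yields the smallest positive delay $\tau_0=0.4567\ldots$. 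The transversality $\partial_\tau\operatorname{Re}\lambda(\mu_0)\ne 0$ is then checked by implicit differentiation of $(5.3_0)$ and reduces to verifying that a certain explicit determinant (built from $\omega_0,\tau_0,r_0$) does not vanish --- a routine but nontrivial numerical check that I expect to be the main technical obstacle, together with the geometric-multiplicity check that these roots are simple (ensured because the corresponding derivative of the characteristic polynomial is nonzero at $\lambda=\mathrm{i}\omega_0$).

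Finally, and this is the part that requires the most care, I would verify that every remaining eigenvalue of \eqref{eq5.2} at $(\tau_0,r_0)$ has strictly negative real part. For each $n\ne 0,5$, the characteristic equation $(5.3_n)$ is a transcendental equation of the same type, and I would proceed as follows: (a) for $n=1,2,3,4$ check directly, using the Mikhailov/argument-principle criterion or a standard $D$-subdivision of the $(\tau,r)$-plane, that $(5.3_n)$ has no root with $\operatorname{Re}\lambda\ge 0$ at $(\tau_0,r_0)$; (b) for $n\ge 6$ use the fact that $T_n(\tau,r)\to -\infty$ and $D_n(\tau,r)\to +\infty$ as $n\to\infty$, so that for all sufficiently large $n$ the roots satisfy $\operatorname{Re}\lambda<0$, leaving only finitely many modes to inspect numerically. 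With (i) established, hypothesis (\textbf{H}2) of Section~3 holds with $n_1=0$, $n_2=5$, the transversality conditions already produced, and Theorem \ref{Phi-Psi} immediately yields the Turing--Hopf bifurcation asserted in (ii).
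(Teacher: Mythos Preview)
The paper itself offers no proof of this proposition; it is asserted as a numerical fact and the computation of the normal-form coefficients begins immediately afterward. Your outline is the standard route to such a verification and is sound in spirit, but one step is stated incorrectly: it is not true that ``only $n=5$ admits a positive root $r$'' of the steady-state equation $D_n|_{\lambda=0}=0$. In fact each mode $n=1,\ldots,8$ yields a positive critical value $r_n$, and what distinguishes $n=5$ is that $r_5\approx 2.8646$ is the \emph{maximum} among them, so that at $r=r_0$ one has $D_n|_{\lambda=0}>0$ for every $n\neq 5$ and hence no other zero eigenvalue. This also means your finite check in step~(a) must cover more than $n=1,\ldots,4$: the transcendental equations $(5.3_n)$ for $n=6,7,8$ (with $r_6\approx 2.79$ in particular) are not yet in the asymptotic regime invoked in step~(b), and the non-critical eigenvalues on the two distinguished modes $n=0$ and $n=5$ must likewise be shown to lie in the open left half-plane. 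With those corrections the plan is complete.
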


After taking the time scale transformation $t\to t/\tau$. 
%We get 
%the parallel system of \eqref{eqB}
%\setcounter{equation}{3}
%\begin{equation}
%\frac{d}{dt}U(t)=D(\tau,r)\Delta U(t)+{L}(r,\tau)(U_{t})+F(\tau,r,U^{t}),
%\end{equation}
%with $D(\tau,r)=\tau\hat{D}$, ${L}(r,\tau)(\cdot) = \tau\hat{L}(r,\tau)(\cdot)$ and $F_0(\tau,r,\cdot)$ is given by
%\begin{equation*}
%F_0(\tau,r,\varphi)=\tau\left(\begin{array}{c}
%\!u(t)[1\!-\!u(t)]\!-\!\frac{au(t)v(t)}{u(t)+b}\\
%rv(t)[1\!-\!\frac{v(t-1)}{u(t-1)}]
%\end{array}\right)-{L}(r,\tau)(\varphi).
%\end{equation*}
we can get the coefficients in the truncated normal forms \eqref{eqNF3} by the formula given in Theorem \ref{th2},
\begin{equation*}
\begin{aligned}
&f_{\alpha_1z_1}^{11} = 3.5526 + 2.0355\mathrm{i}, \hspace{0.93cm} f_{\alpha_2z_1}^{11} = 0.4523 + 0.42914\mathrm{i}, \\
&f_{\alpha_1z_2}^{13}= 0,\hspace{3.5cm}  f_{\alpha_2z_2}^{13} = -0.0433,\\
&g_{210}^{11} = -25.8208-41.9428\mathrm{i},\hspace{0.55cm} g_{102}^{11} = -0.8398+0.1637\mathrm{i},\\
& g_{111}^{3} = - 0.2315, \hspace{2.6cm}g_{003}^{13} = -0.6018,
\end{aligned}
\end{equation*}
The corresponding amplitude system is 
\begin{equation}
\begin{aligned}
&\frac{d\rho}{d{t}}=\rho[\epsilon_{1}(\tau,r) + \rho^{2}+b_0 v^{2}],\\
&\frac{dv}{d{t}}=v[\epsilon_{2}(\tau,r) + c_0 \rho^{2} + d_0 v^{2}].
\end{aligned}
\end{equation}
with $ b_0 =1.3954, \;c_0 =0.0090,\;d_0 =1,\; \varepsilon=-1,$ and
\begin{equation*}
\begin{aligned}
&\epsilon_{1}(\tau,r)=-1.7763\,(\tau-\tau_0)-0.2261\,(r-r_0),\\
&\epsilon_{2}(\tau,r)=0.0217\,(r-r_0).\\ 
\end{aligned}
\end{equation*}
Since $d_0-b_0c_0=0.9874>0$, we assert that the case  ${\uppercase\expandafter{\romannumeral 1}}_a$ in \cite[chap.7]{Phillp1988}
occurs. The bifurcation sets with parameters $(\alpha_1,\alpha_2)=(\tau-\tau_0,r-r_0)$ near $(0,0)$ is given in Figure \ref{fig3}. 
\begin{figure}[htb]
	\centering 
	\subfigure[]{       
	\begin{minipage}{0.4\linewidth} 
		\centering                                      
		\includegraphics[scale=0.25]{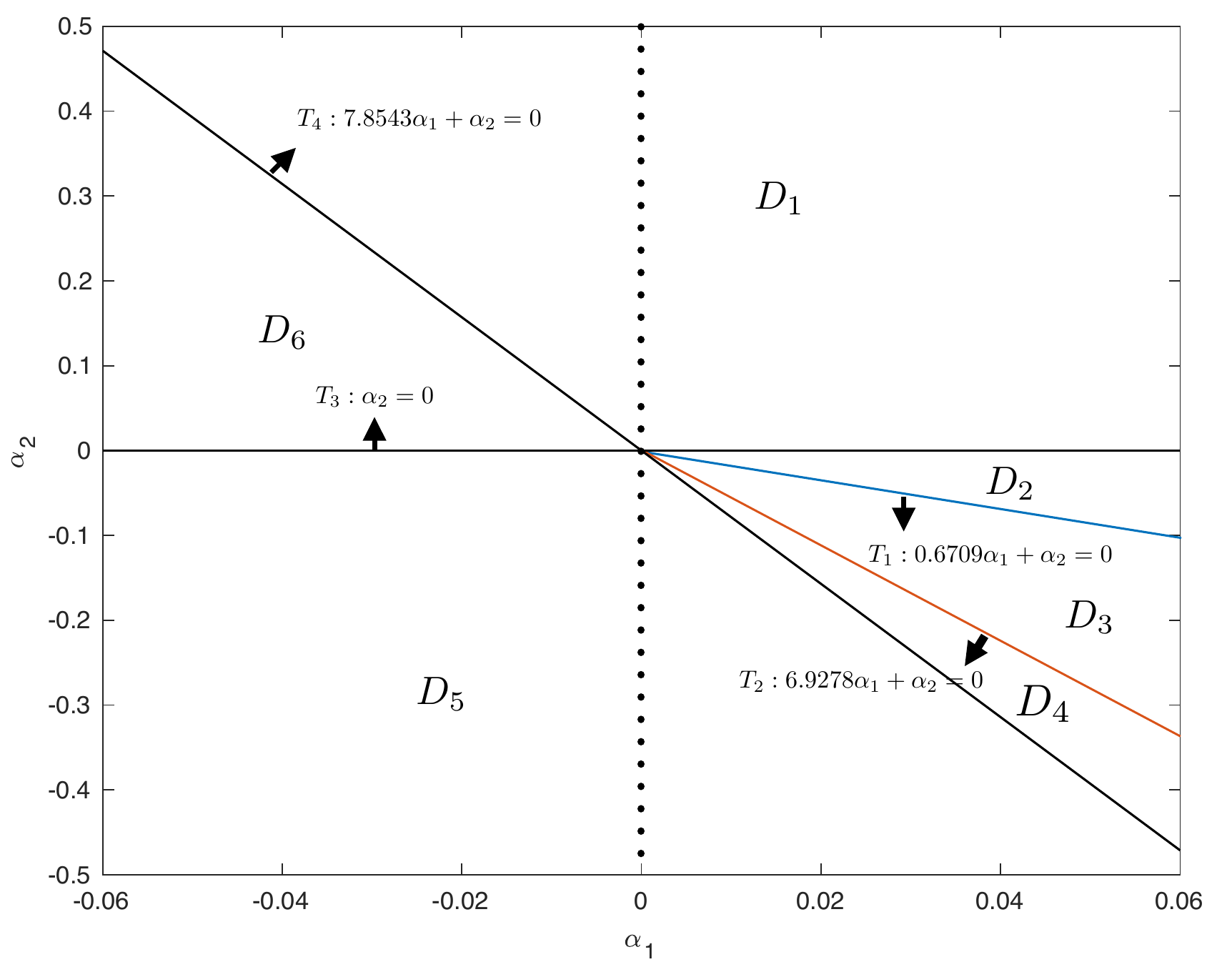} \label{fig3}              
	\end{minipage}}   
	\subfigure[]{                  
	\begin{minipage}{0.55\linewidth} 
		\centering                                     
		\includegraphics[scale=0.25]{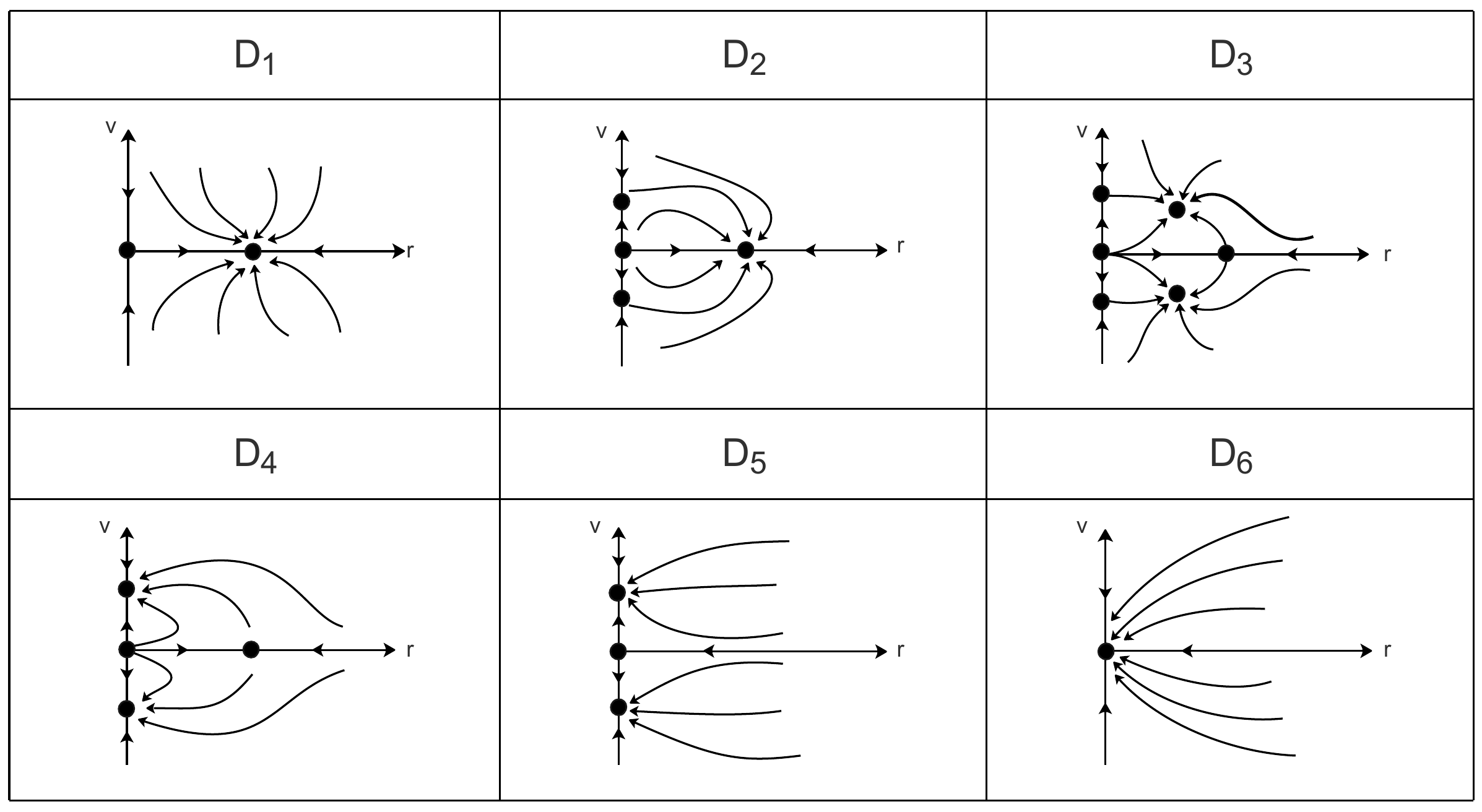} \label{fig32}                  
	\end{minipage}
	}                                               
	\caption{(a) Bifurcation sets in $(\alpha_1,\alpha_2)$ plane. (b) Phase portraits in $D_1-D_6$.}                                              
\end{figure}
The $(\alpha_1,\alpha_2)$ plane is divided into six regions $D_1-D_6$ by the branch line $T_1-T_4$. The detailed dynamics in $D_1$-$D_6$ near the origin have been shown in Figure \ref{fig32}. 
%\begin{figure}[htbp]
%	\centering                                                           
%	\includegraphics[width=0.7\linewidth ]{b2}            
%	\caption{Phase portraits in $D_1-D_6$.
%	} 
%                                              
%\end{figure}
There exists  one stable spatially homogeneous periodic solution in $D_1$ and $D_2$, two stable spatially inhomogeneous periodic solutions in $D_3$, two stable  spatially steady states in $D_4$ and $D_5$, one stable spatially homogeneous steady state in $D_6$.

In this paper, we just exhibit the stable spatiotemporal and spatial patterns in $D_3$ and $D_5$.  
In Figure \ref{fig4}, parameters $(\alpha_1,\alpha_2) = (0.05,-0.33)\in D_3$, and the coexistence spatially inhomogeneous periodic solutions are find.
In Figure \ref{fig5}, parameters $(\alpha_1,\alpha_2) = (-0.1,-0.4)\in D_5$, and two spatially inhomogeneous steady state solutions are shown.
\begin{figure}[htb]
	\centering                           
	\subfigure[$u(t,x)$]{       
		\begin{minipage}{0.48\linewidth} 
			\centering                                      
			\includegraphics[ scale=0.25]{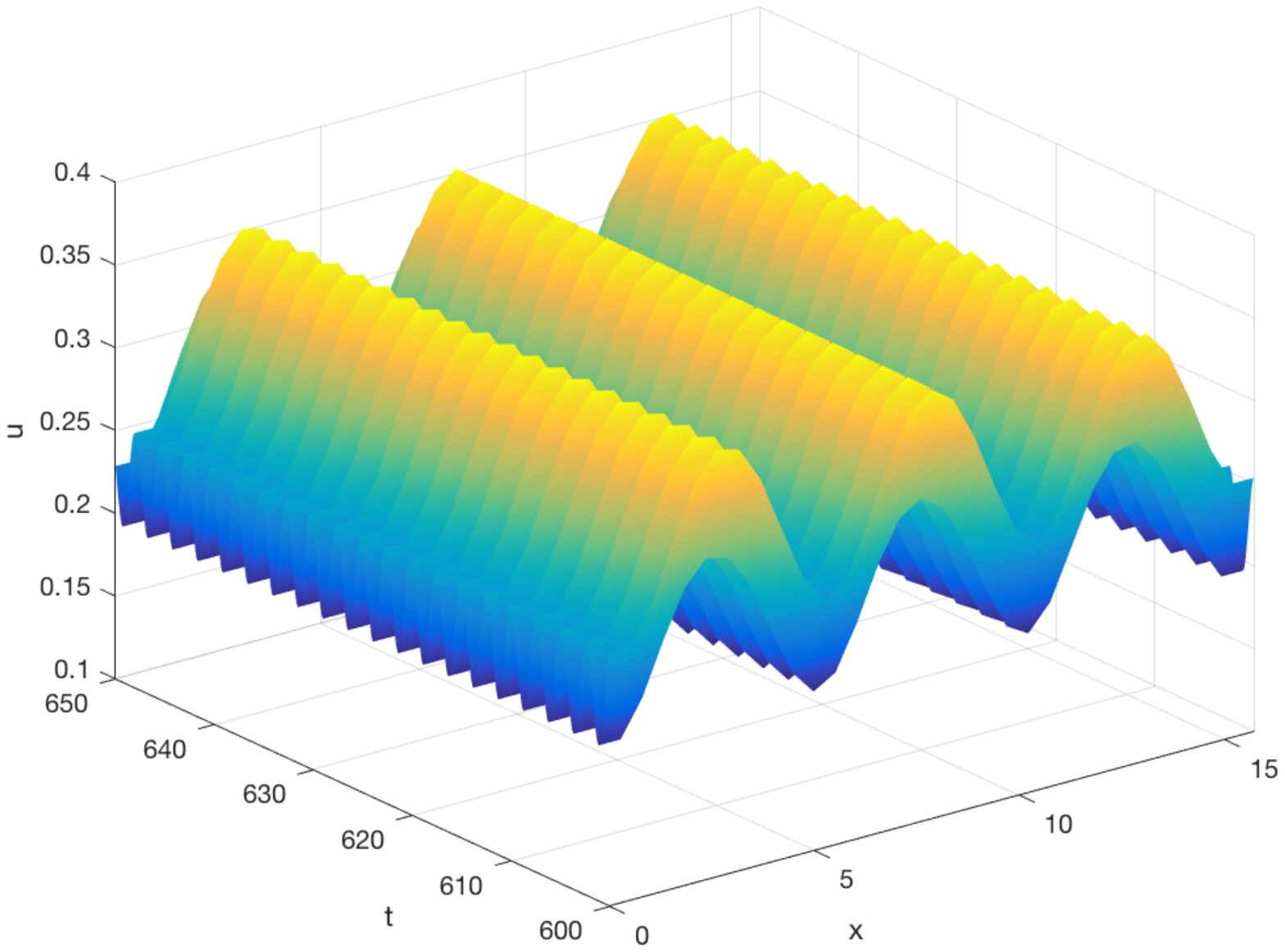}            
	\end{minipage}}
	\subfigure[$v(t,x)$]{                  
		\begin{minipage}{0.48\linewidth} 
			\centering                                     
			\includegraphics[scale=0.25]{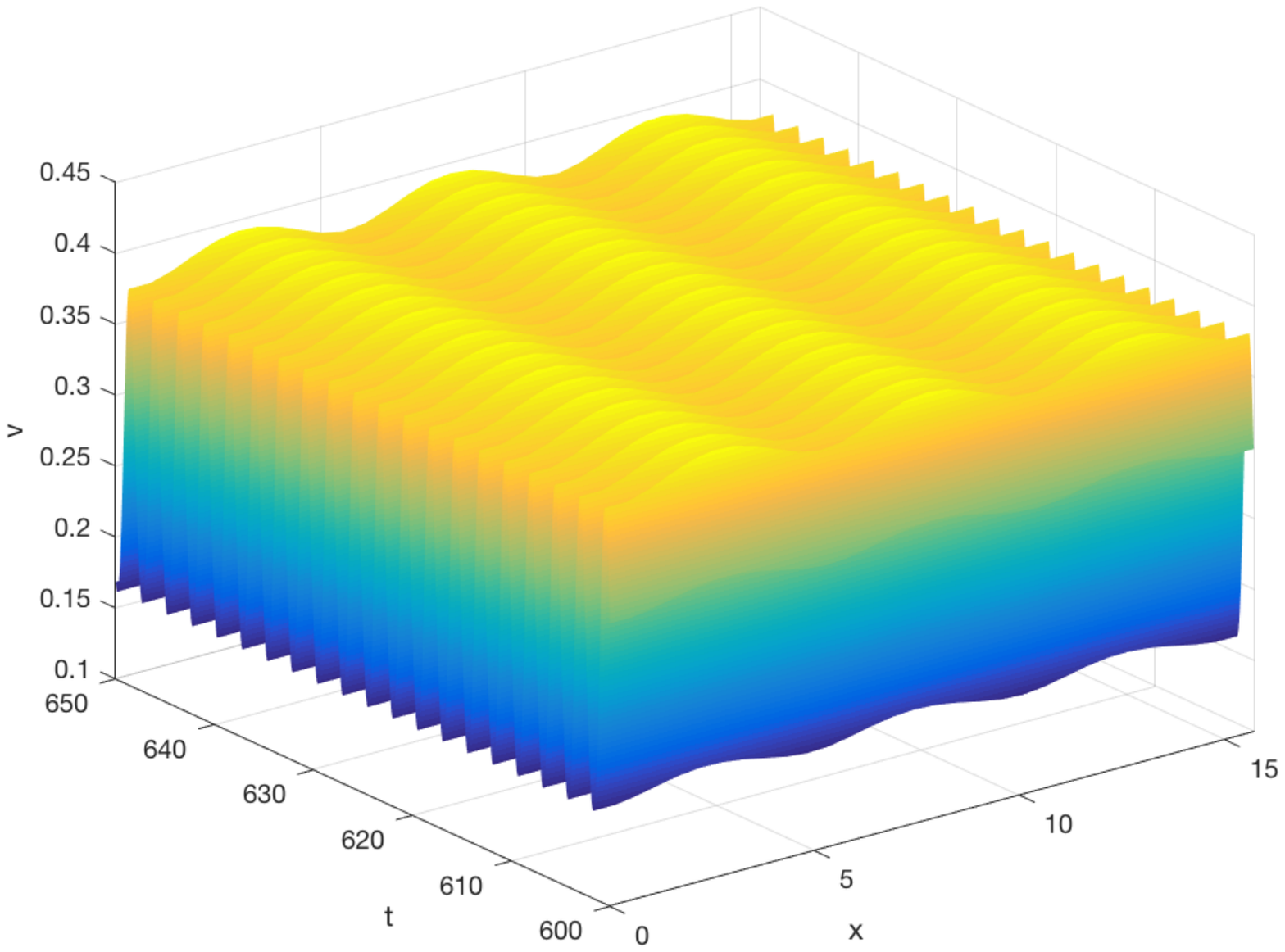}                
		\end{minipage}
	}
	
	\subfigure[$u(t,x)$]{       
		\begin{minipage}{0.48\linewidth} 
			\centering                                      
			\includegraphics[scale=0.25]{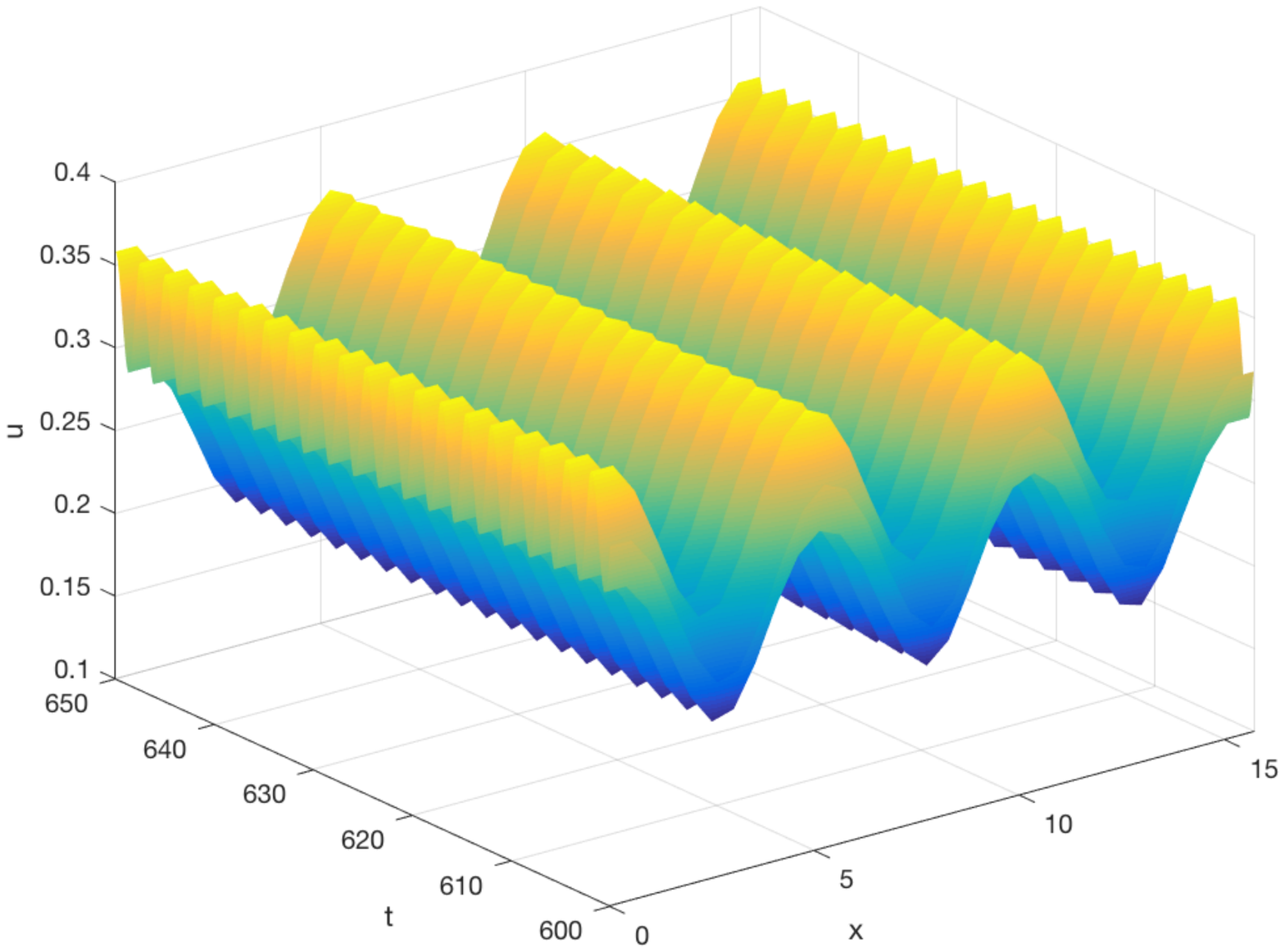}            
	\end{minipage}}
	\subfigure[$v(t,x)$]{                  
		\begin{minipage}{0.48\linewidth} 
			\centering                                     
			\includegraphics[scale=0.25]{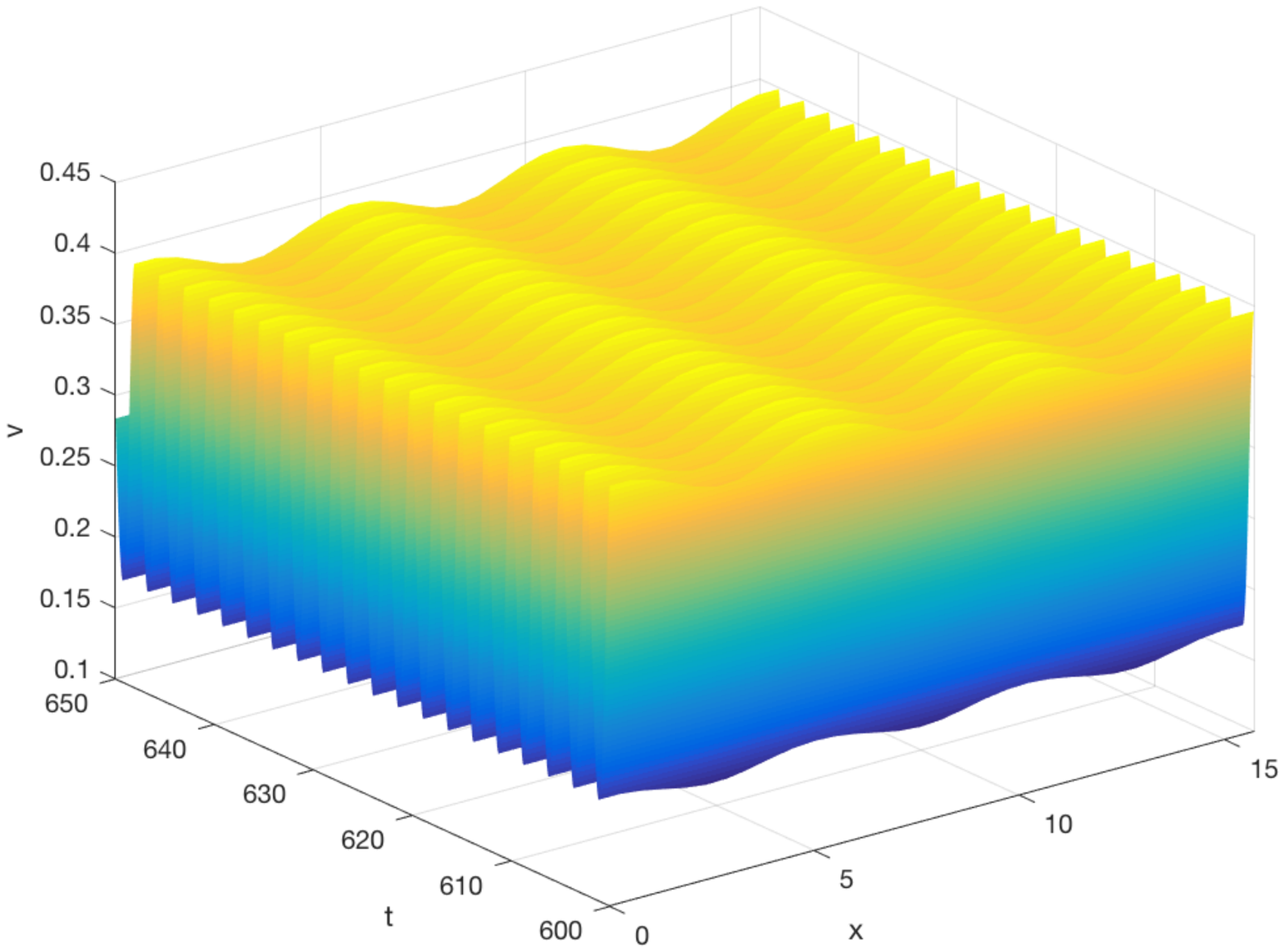}                
		\end{minipage}
	}
	\caption{Two spatially inhomogeneous periodic solutions coexist in $D_3$, with $(\alpha_1,\alpha_2) = (0.05,-0.33)$. (a),(b) are the solutions $u(t,x),v(t,x)$ of \eqref{eq5.1} with the initial value functions $(\varphi(t,x),\psi(t,x))=(u_0+0.005\sin x,v_0+0.001\sin x)$. (c),(d) are the solutions $u(t,x),v(t,x)$ of \eqref{eq5.1} with the initial value functions $(\varphi(t,x),\psi(t,x))=(u_0-0.005\sin x,v_0-0.001\sin x)$.} \label{fig4}                                                 
\end{figure}
\begin{figure}[htb]
	\centering                           
	\subfigure[$u(t,x)$]{       
		\begin{minipage}{0.48\linewidth} 
			\centering                                      
			\includegraphics[scale=0.25]{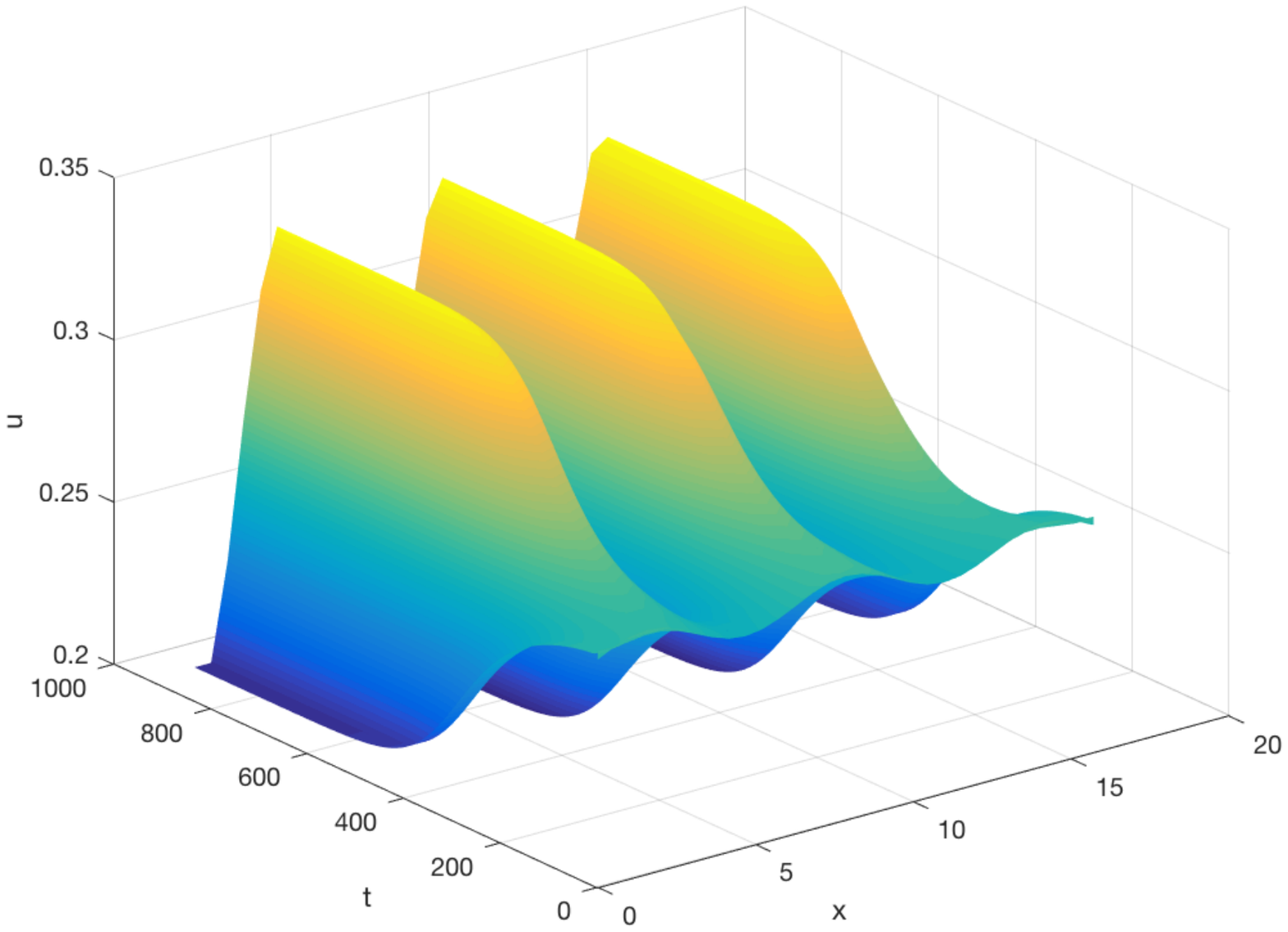}            
	\end{minipage}}
	\subfigure[$v(t,x)$]{                  
		\begin{minipage}{0.48\linewidth} 
			\centering                                     
			\includegraphics[scale=0.25]{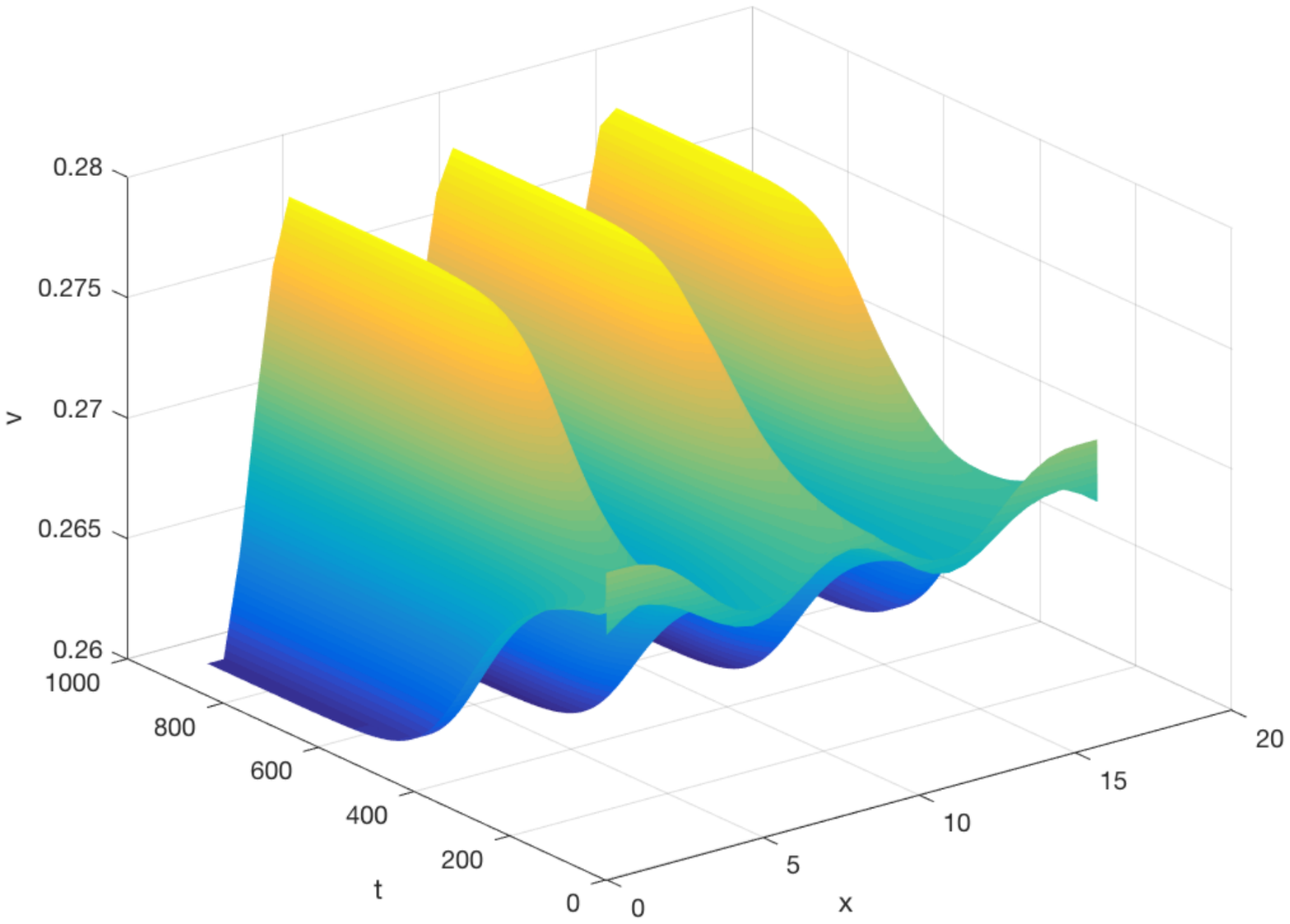}                
		\end{minipage}
	}
	
	\subfigure[$u(t,x)$]{       
		\begin{minipage}{0.48\linewidth} 
			\centering                                      
			\includegraphics[scale=0.25]{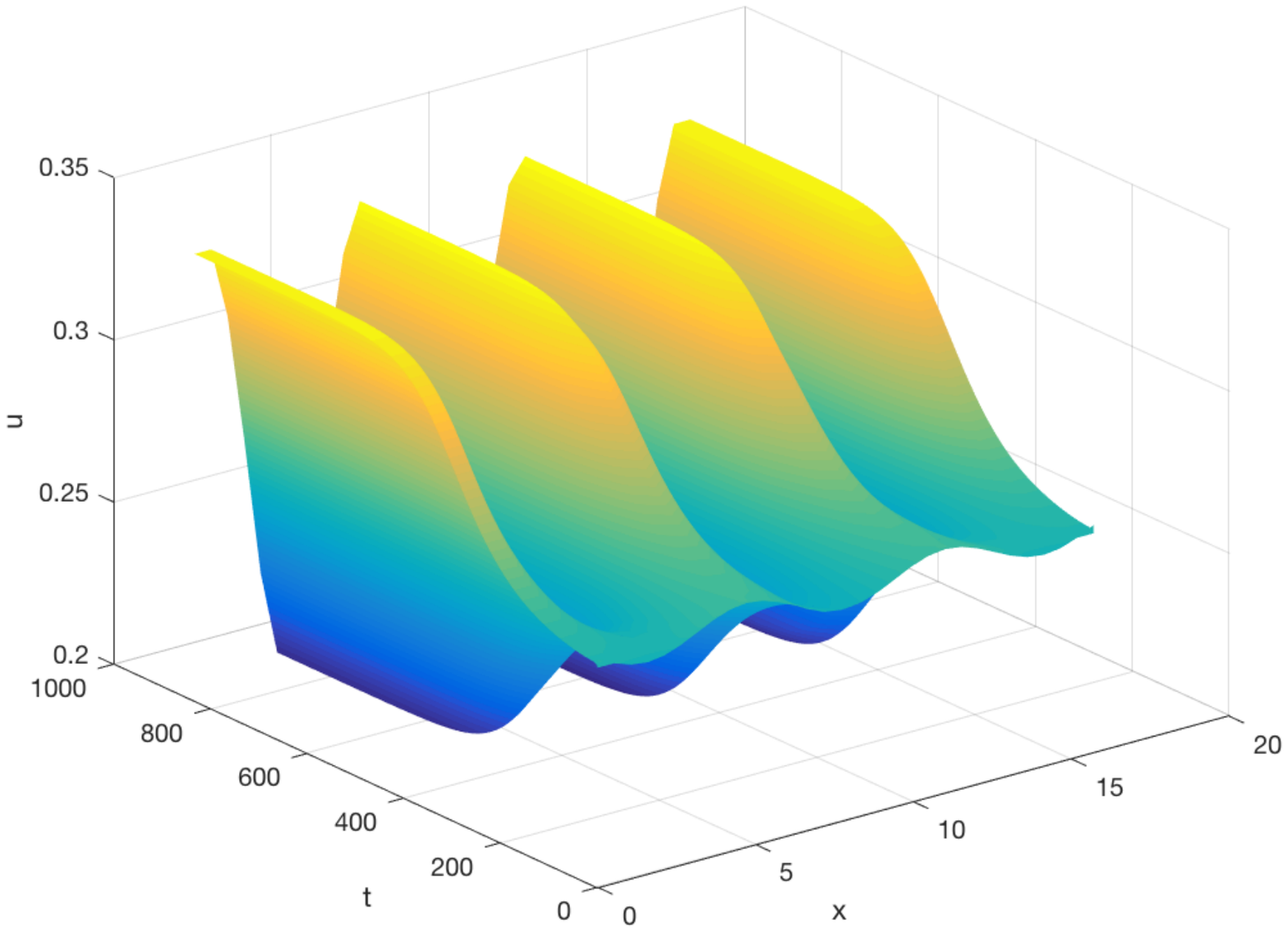}            
	\end{minipage}}
	\subfigure[$v(t,x)$]{                  
		\begin{minipage}{0.48\linewidth} 
			\centering                                     
			\includegraphics[scale=0.25]{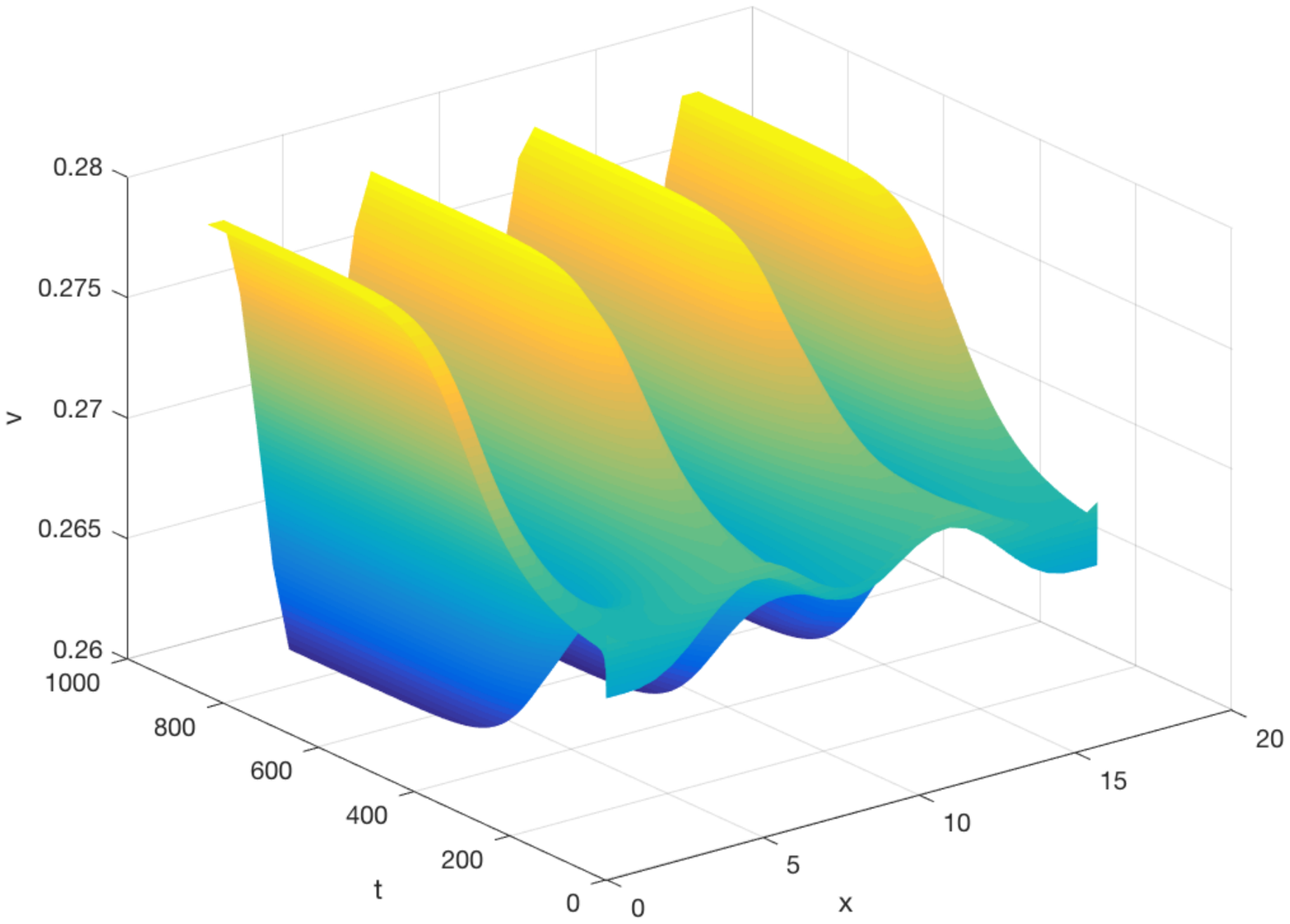}                
		\end{minipage}
	}
	\caption{Two spatially inhomogeneous steady state solutions coexist in $D_5$, with $(\alpha_1,\alpha_2) = (-0.1,-0.4)$.  (a),(b) are the solutions $u(t,x),v(t,x)$ of \eqref{eq5.1} with the initial value functions $(\varphi(t,x),\psi(t,x))=(u_0+0.005\sin x,v_0+0.001\sin x)$. (c),(d) are the solutions $u(t,x),v(t,x)$ of \eqref{eq5.1} with the initial value functions $(\varphi(t,x),\psi(t,x))=(u_0-0.005\sin x,v_0-0.001\sin x)$.} \label{fig5}                                                 
\end{figure}
\section{Conclusions}
Rigorous calculation formulas of the normal forms for PFDEs with a Hopf-zero singular point are given, for the first time. 
We decompose the enlarged phase space directly by means of the phase space decomposition of the corresponding RFDEs.
The necessary condition $U_2^2(z,0)\in V_2^3(\mathcal{Q}_1)$ has been replaced by a weaker one \eqref{U22=}, which more convenient for application.

The concrete formulas of eigenfunctions and the normal forms for Turing-Hopf bifurcation are present. Moreover, the corresponding dynamic  behaviors of the original system \eqref{A} are studied by the further analysis of the truncated  normal forms \eqref{eqNF3}. These reveals that Turing-Hopf bifurcation is one of the mechanism for the formation of the spatio-temporal pattern, which is oscillate in both space and time.

As an illustration of this calculation formula and the Turing-Hopf theory, Holling-Tanner model with a discrete time delay is investigated. The results of numerical simulations have shown that the spatio-temporal patterns can be self-organized when the parameters in some regions close to Turing-Hopf bifurcation points.

The greatest advantage of the formulas \eqref{eqNF3} is that they are convenient for computer processing, since they are only based on simple matrix operations and derivations.  In other words, 
we only need to enter the different models into the program, then the second and third order  coefficients in normal form and the planner system \eqref{rv} can be computed directly by the Matlab.
%\begin{appendices}
	\section*{Appendix A}
	Doing the second-order Fr\'{e}chet derivative of $F_{0}(\alpha,\varphi)$  and decomposing $U^t\in\mathcal{C}_0^1$ as in \eqref{U^{t}}. With the help of MATLAB, we obtain the coefficient vectors $F_{\alpha_iz_j},$ $F_{y_i(\theta)z_j},$ $F_{mnk}$ of $F_{0}^{(2)}(z,y,\alpha)$ are
	\begin{equation*}
	\begin{split}
	&F_{\alpha_{i}z_1} =2\,\left[\frac{\partial}{\partial \mu_i}A(\mu_0)\phi_1(0)+\frac{\partial}{\partial \mu_i}B(\mu_0)\phi_1(-1)\right],\hspace{3.35cm} (i=1,2),\\
	&F_{\alpha_{i}z_{2}} =2\,[-\frac{n_2^2}{l^2}\frac{\partial}{\partial \mu_i}D(\mu_0)\phi_2(0)+\frac{\partial}{\partial \mu_i}A(\mu_0)\phi_2(0)+\frac{\partial}{\partial \mu_i}\textbf{}(\mu_0)\phi_2(-1)],\quad (i=1,2),\\
	&F_{y_1(0)z_1} \;\;= 2( F_{uu} \!+\! F_{uv} k_{1} \!+\! F_{uu_{\tau}} e^{\mathrm{-i}\omega_{0}} \!+\! F_{uv_{\tau}} k_{1} e^{\mathrm{-i}\omega_{0}}),\hspace{2cm}\\
	&F_{y_1(-1)z_1} = 2( F_{uu_{\tau}} \!+\! F_{vu_{\tau}} k_{1} \!+\!  F_{u_{\tau}u_{\tau}} e^{\mathrm{-i}\omega_{0}} \!+\! F_{u_{\tau}v_{\tau}} k_{1} e^{\mathrm{-i}\omega_{0}}),\\
	&F_{y_2(0)z_1} \;\;= 2( F_{uv} \!+\!  F_{vv} k_{1} \!+\! F_{vu_{\tau}} e^{\mathrm{-i}\omega_{0}} \!+\! F_{vv_{\tau}} k_{1} e^{\mathrm{-i}\omega_{0}}),\\
	&F_{y_2(-1)z_1} =  2(F_{uv_{\tau}} \!+\! F_{vv_{\tau}} k_{1} \!+\! F_{u_{\tau}v_{\tau}} e^{\mathrm{-i}\omega_{0}} \!+\!  F_{v_{\tau}v_{\tau}} k_{1} e^{\mathrm{-i}\omega_{0}}),\\
	&F_{y_1(0)z_{2}} \;\;= 2( F_{uu} \!+\!  F_{uu_{\tau}} \!+\! F_{uv} k_{3} \!+\! F_{uv_{\tau}} k_{3}),\\	
	&F_{y_1(-1)z_{2}} = 2(F_{uu_{\tau}} \!+\! F_{u_{\tau}u_{\tau}} \!+\! F_{u_{\tau}v_{\tau}} k_{3} \!+\! F_{vu_{\tau}} k_{3}),\hspace{4.7cm}\\
	&F_{y_2(0)z_{2}} \;\;=2( F_{uv} \!+\! F_{vu_{\tau}} \!+\!  F_{vv} k_{3} \!+\! F_{vv_{\tau}} k_{3} ),\\
	&F_{y_2(-1)z_{2}} =2( F_{uv_{\tau}} \!+\! F_{u_{\tau}v_{\tau}} \!+\! F_{vv_{\tau}} k_{3} \!+\!  F_{v_{\tau}v_{\tau}} k_{3}),
	\end{split}
	\end{equation*}
	and
	\begin{equation*}
	\begin{split}	
	&F_{200}  = F_{uu} \!\!+\! F_{vv} k_{1}^2\!+\!    F_{u_{\tau}u_{\tau}} e^{\mathrm{-2i}\omega_{0}} \!+\! F_{v_{\tau}v_{\tau}} k_{1}^2 e^{\mathrm{-2i}\omega_{0}} +2( F_{uv} k_{1} \!+\! F_{uu_{\tau}} e^{\mathrm{-i}\omega_{0}}
	\\&\hspace{1.2cm}\!+\! F_{uv_{\tau}} k_{1} e^{\mathrm{-i}\omega_{0}} \!+\! F_{vu_{\tau}} k_{1} e^{\mathrm{-i}\omega_{0}}
	\!+\! F_{vv_{\tau}} k_{1}^2 e^{\mathrm{-i}\omega_{0}}
	\!+\! F_{u_{\tau}v_{\tau}} k_{1} e^{\mathrm{-2i}\omega_{0}}),\\
%		\end{split}
%		\end{equation*}
%		\begin{equation*}\label{E110}
%		\begin{split}	
	&F_{110}  =\! 2[ F_{uu} \!+\!  F_{vv} k_{1}\bar{k}_{1}\!+\!  F_{u_{\tau}u_{\tau}} \!+\!  F_{v_{\tau}v_{\tau}} k_{1}\bar{k}_{1}\!+\! F_{uv} (k_{1}+\bar{k}_1) \!+\! F_{uu_{\tau}} (e^{\mathrm{\!-\!i}\omega_{0}} \!\!+\!  e^{\mathrm{i}\omega_{0}})\\&\hspace{1.2cm}+F_{uv_{\tau}}( k_{1} e^{\mathrm{-i}\omega_{0}} +\bar{k}_{1}e^{\mathrm{i}\omega_{0}} )
	\!+\! F_{vu_{\tau}} (\bar{k}_{1}e^{\mathrm{-i}\omega_{0}}+ k_{1} e^{\mathrm{i}\omega_{0}})\\&\hspace{1.2cm}+ F_{vv_{\tau}} k_{1}\bar{k}_{1}( e^{\mathrm{-i}\omega_{0}}\!+\!e^{\mathrm{i}\omega_{0}}) 
	\!+\! F_{u_{\tau}v_{\tau}}( k_{1}+\bar{k}_{1})],\\
	%	\end{split}
	%	\end{equation*}
	%	\begin{equation*}
	%	\begin{split}
	&F_{101}  = 2 [F_{uu} +F_{vv} k_{1} k_{3}+ F_{u_{\tau}u_{\tau}} e^{\mathrm{-i}\omega_{0}} + F_{v_{\tau}v_{\tau}} k_{1} k_{3} e^{\mathrm{-i}\omega_{0}}+ F_{uv} (k_{1} +k_{3})\\&\hspace{1.2cm}
	+ F_{uu_{\tau}} (1+ e^{\mathrm{-i}\omega_{0}})+F_{uv_{\tau}}( k_{3}+k_{1} e^{\mathrm{-i}\omega_{0}})
	+  F_{vu_{\tau}}( k_{1} + k_{3} e^{\mathrm{-i}\omega_{0}})\\&\hspace{1.2cm}
	+F_{vv_{\tau}} k_{1} k_{3}(1+ e^{\mathrm{-i}\omega_{0}}) + F_{u_{\tau}v_{\tau}} (k_{1}+ k_{3}) e^{\mathrm{-i}\omega_{0}}],\\
	%	\end{split}
	%	\end{equation*}
	%	\begin{equation*}\label{E002}
	%	\begin{split}
	&F_{002}  =\! F_{uu} +  F_{vv} k_{3}^2 + F_{u_{\tau}u_{\tau}}\! \!+  F_{v_{\tau}v_{\tau}} k_{3}^2 + 2( F_{uv} k_{3} + F_{uu_{\tau}} \!+\! F_{uv_{\tau}} k_{3}  +  F_{vu_{\tau}} k_{3}
	\\&\hspace{1.2cm} + F_{vv_{\tau}} k_{3}^2 +  F_{u_{\tau}v_{\tau}} k_{3}),\\
	&F_{020} = \overline{F_{200}},\\\hspace{1cm}
	&F_{011}  = \overline{F_{101}}.\\
	\end{split}
	\end{equation*}
	Here $u,v,u_{\tau},v_{\tau}$ are denote as the simplified form of $u(t),v(t),u(t-1),v(t-1)$, respectively. $F_{uu}:=\frac{\partial ^2}{\partial u^2}F(\mu_0,0)$, and so forth.
	
	In a similar ways, we obtain the requisite coefficient vectors $F_{mnk}$ of $F_0^{(3)}(z,0,0)$ by Matlab:
	\begin{equation*}
	\begin{aligned}
	F_{210} =& 3[F_{uuu}+F_{uuv}(2 k_{1}+\bar{k}_1)+F_{uuu_{\tau}}(2 e^{\mathrm{-i}\omega_{0}}+e^{\mathrm{i}\omega_{0}})	+  F_{uvv} k_1(2 \bar{k}_{1}+k_{1})~~~ \\&+  F_{uuv_{\tau}} (2 k_{1} e^{\mathrm{-i}\omega_{0}}+\bar{k}_{1}e^{\mathrm{i}\omega_{0}} )
	+ 2F_{uvu_{\tau}} (k_{1} e^{\mathrm{-i}\omega_{0}}+k_{1} e^{\mathrm{i}\omega_{0}}+ \bar{k}_{1}e^{\mathrm{-i}\omega_{0}})\\&+  2F_{uvv_{\tau}}k_1 (\bar{k}_{1}e^{\mathrm{-i}\omega_{0}} + \bar{k}_{1}e^{\mathrm{i}\omega_{0}}+k_{1} e^{\mathrm{-i}\omega_{0}})+  F_{uu_{\tau}u_{\tau}}(e^{\mathrm{-2i}\omega_{0}}+2)\\&+ 2F_{uu_{\tau}v_{\tau}}(k_{1}+\bar{k}_{1}+k_{1}e^{\mathrm{-2i}\omega_{0}})+  F_{uv_{\tau}v_{\tau}}k_1(2\bar{k}_{1}+k_{1} e^{\mathrm{-2i}\omega_{0}})\\&+  F_{vvv} k_{1}^2\bar{k}_{1}+  F_{vvu_{\tau}} k_1(2  \bar{k}_{1}e^{\mathrm{-i}\omega_{0}}+k_{1} e^{\mathrm{i}\omega_{0}})+  F_{vvv_{\tau}} k_1^2\bar{k}_{1}(2 e^{\mathrm{-i}\omega_{0}} + e^{\mathrm{i}\omega_{0}} ) \\&+  F_{vu_{\tau}u_{\tau}} (2 k_{1}+\bar{k}_{1}e^{\mathrm{-2i}\omega_{0}} )+2  F_{vu_{\tau}v_{\tau}} k_1(\bar{k}_{1}+\bar{k}_{1}e^{\mathrm{-2i}\omega_{0}}+k_{1})\\&+  F_{vv_{\tau}v_{\tau}} k_1^2\bar{k}_{1} (2 +e^{\mathrm{-2i}\omega_{0}})+ F_{u_{\tau}u_{\tau}u_{\tau}} e^{\mathrm{-i}\omega_{0}}+  F_{u_{\tau}u_{\tau}v_{\tau}} (2 k_{1} e^{\mathrm{-i}\omega_{0}}\\&+ \bar{k}_{1}e^{\mathrm{-i}\omega_{0}})+  F_{u_{\tau}v_{\tau}v_{\tau}}k_1  e^{\mathrm{-i}\omega_{0}}(2\bar{k}_{1} +k_{1})+ F_{v_{\tau}v_{\tau}v_{\tau}}k_{1}^2\bar{k}_1e^{\mathrm{-i}\omega_{0}}],\\
%		\end{aligned}
%		\end{equation*}
%	%	with $F_{uuu}:==\frac{\partial ^3}{\partial u^3}F(\mu_0,U^t)|_{(U^t=0)}$, and so forth.
%	%	
%	%	The coefficient vectors of the term
%	%	$z_1z_{2}^2\beta_{n_{1}}\beta_{n_{2}}^2$ in $F_0^{(3)}(z,0,0)$ is 
%		\begin{equation*}
%		\begin{aligned}
	F_{102} =& 3 [F_{uuu} \!+\! F_{uuv} ( k_{1} \!+\! 2  k_{3}) \!+\! F_{uuu_{\tau}} (e^{\mathrm{-i}\omega_{0}} \!+\! 2) \!+\! F_{uuv_{\tau}} (2  k_{3} \!+\!  k_{1} e^{\mathrm{-i}\omega_{0}})\\& \!+\! F_{uvv} ( k_{3}^2 \!+\! 2  k_{1}  k_{3}) +2 F_{uvu_{\tau}} ( k_{1} \!+\!  k_{3} \!+\!  k_{3} e^{\mathrm{-i}\omega_{0}}) \\&\!+\! 2 F_{uvv_{\tau}} ( k_{1}  k_{3} \!+\!  k_{3}^2 \!+\!  k_{1}  k_{3} e^{\mathrm{-i}\omega_{0}}) \!+\! F_{uu_{\tau}u_{\tau}} (2 e^{\mathrm{-i}\omega_{0}} \!+\! 1) \\&+2 F_{uu_{\tau}v_{\tau}} ( k_{3} \!+\!  k_{1} e^{\mathrm{-i}\omega_{0}} \!+\!  k_{3} e^{\mathrm{-i}\omega_{0}}) \!+\! F_{uv_{\tau}v_{\tau}}  k_{3}( k_{3} \!+\! 2  k_{1} e^{\mathrm{-i}\omega_{0}} ) \\&\!+\!  F_{vvv}  k_{1}  k_{3}^2 \!+\! F_{vvu_{\tau}} k_{3}(k_{3}e^{\mathrm{-i}\omega_{0}}   \!+\! 2  k_{1} ) \!+\! F_{vvv_{\tau}} k_{1}  k_{3}^2 (2  \!+\!   e^{\mathrm{-i}\omega_{0}}) \\&\!+\! F_{vu_{\tau}u_{\tau}} ( k_{1} \!+\! 2  k_{3} e^{\mathrm{-i}\omega_{0}}) \!+\! 2F_{vu_{\tau}v_{\tau}}  k_{3}( k_{1}  \!+\!  k_{3} e^{\mathrm{-i}\omega_{0}} \!+\!  k_{1} e^{\mathrm{-i}\omega_{0}}) \\&\!+\! F_{vv_{\tau}v_{\tau}}  k_{1}  k_{3}^2( 1\!+\! 2  e^{\mathrm{-i}\omega_{0}}) \!+\! F_{u_{\tau}u_{\tau}u_{\tau}} e^{\mathrm{-i}\omega_{0}} \!+\! F_{u_{\tau}u_{\tau}v_{\tau}} ( k_{1} e^{\mathrm{-i}\omega_{0}} \!+\! 2  k_{3} e^{\mathrm{-i}\omega_{0}}) \\&\!+\! F_{u_{\tau}v_{\tau}v_{\tau}} k_{3}e^{\mathrm{-i}\omega_{0}} (k_{3} \!+\! 2  k_{1} ) \!+\! F_{v_{\tau}v_{\tau}v_{\tau}}  k_{1}  k_{3}^2 e^{\mathrm{-i}\omega_{0}}],\\
	\end{aligned}
	\end{equation*}
	\begin{equation*}
	\begin{aligned}
	F_{111} =&  6\{ F_{uuu}  +  F_{uuv} ( k_{1} + k_{3} +  \bar{k}_{1})  +  F_{uuu_{\tau}} ( e^{\mathrm{-i}\omega_{0}} +  e^{\mathrm{i}\omega_{0}} + 1)  \\&+  F_{uuv_{\tau}} ( k_{3} +  k_{1} e^{\mathrm{-i}\omega_{0}} +  \bar{k}_{1}e^{\mathrm{i}\omega_{0}} )  +  F_{uvv} ( k_{1}\bar{k}_{1} +  k_{1} k_{3}+  k_{3} \bar{k}_{1}) \\& +  F_{uvu_{\tau}} [k_{1}(1+e^{\mathrm{i}\omega_{0}}) + \bar{k}_{1}(1+e^{\mathrm{-i}\omega_{0}} )+ k_{3} (e^{\mathrm{-i}\omega_{0}}+e^{\mathrm{i}\omega_{0}})]  \\& +  F_{uvv_{\tau}}[ k_{1} k_{3}(1+ e^{\mathrm{-i}\omega_{0}}) + k_{1}\bar{k}_{1}( e^{\mathrm{-i}\omega_{0}} + e^{\mathrm{i}\omega_{0}} ) +\bar{k}_{1} k_{3} (1+ e^{\mathrm{i}\omega_{0}}) ] \\& +  F_{uu_{\tau}u_{\tau}} ( e^{\mathrm{-i}\omega_{0}} +  e^{\mathrm{i}\omega_{0}} +1 )  +  F_{uu_{\tau}v_{\tau}} [k_{1}(1 +  e^{\mathrm{-i}\omega_{0}})+ \bar{k}_{1}(1 + e^{\mathrm{i}\omega_{0}} ) \\& + k_{3}( e^{\mathrm{-i}\omega_{0}} + e^{\mathrm{i}\omega_{0}})]  +  F_{uv_{\tau}v_{\tau}} ( k_{1}\bar{k}_{1} + \bar{k}_{1} k_{3} e^{\mathrm{i}\omega_{0}}  +  k_{1} k_{3} e^{\mathrm{-i}\omega_{0}})  \\&+   F_{vvv}  k_{1}\bar{k}_{1}k_{3}  +  F_{vvu_{\tau}} ( k_{1}\bar{k}_{1} +  \bar{k}_{1} k_{3} e^{\mathrm{-i}\omega_{0}} +  k_{1} k_{3} e^{\mathrm{i}\omega_{0}})  \\&+  F_{vvv_{\tau}} k_{1}\bar{k}_{1} k_{3} ( 1+  e^{\mathrm{-i}\omega_{0}}  +  e^{\mathrm{i}\omega_{0}} )  +  F_{vu_{\tau}u_{\tau}} ( k_{3} +  k_{1} e^{\mathrm{i}\omega_{0}} +  e^{\mathrm{-i}\omega_{0}} \bar{k}_{1})  \\&+  F_{vu_{\tau}v_{\tau}} [k_{1} k_{3}(1+ e^{\mathrm{i}\omega_{0}}) +  k_{1}\bar{k}_{1}(e^{\mathrm{-i}\omega_{0}} + e^{\mathrm{i}\omega_{0}})+\bar{k}_{1} k_{3} (1+ e^{\mathrm{-i}\omega_{0}} ) ] \\& +  F_{vv_{\tau}v_{\tau}} k_{1}\bar{k}_{1}k_{3}(  1 +  e^{\mathrm{-i}\omega_{0}}  + e^{\mathrm{i}\omega_{0}}) +  F_{u_{\tau}u_{\tau}u_{\tau}} +   F_{u_{\tau}u_{\tau}v_{\tau}} ( k_{1} +  k_{3} +  \bar{k}_{1})\\&   +  F_{u_{\tau}v_{\tau}v_{\tau}} ( k_{1}\bar{k}_{1} +  k_{1} k_{3} +  k_{3} \bar{k}_{1})  +  F_{v_{\tau}v_{\tau}v_{\tau}} k_{1}\bar{k}_{1}  k_{3}\},\\
	%	\end{aligned}
	%	\end{equation*}
	%	The coefficient vectors of the term
	%	$z_{2}^3\beta_{n_{2}}^3$ in $F_0^{(3)}(z,0,0)$ is  
	%	\begin{equation*}
	%	\begin{aligned}
	F_{003} =& F_{uuu} + 3F_{uuu_{\tau}} +3 F_{uu_{\tau}u_{\tau}} + F_{u_{\tau}u_{\tau}u_{\tau}} +3 F_{uuv} k_{3} + 3F_{uuv_{\tau}} k_{3} \qquad\\&+ 6F_{uvu_{\tau}} k_{3} + 6F_{uu_{\tau}v_{\tau}} k_{3} +3 F_{u_{\tau}u_{\tau}v_{\tau}} k_{3} + 3F_{vu_{\tau}u_{\tau}} k_{3} + 3F_{uvv} k_{3}^2 \\&+6 F_{uvv_{\tau}} k_{3}^2 +3 F_{uv_{\tau}v_{\tau}} k_{3}^2 + F_{vvv} k_{3}^3 +3 F_{u_{\tau}v_{\tau}v_{\tau}} k_{3}^2 + 3F_{vvu_{\tau}} k_{3}^2 \\&+3 F_{vvv_{\tau}} k_{3}^3 + 6F_{vu_{\tau}v_{\tau}} k_{3}^2 + 3F_{vv_{\tau}v_{\tau}} k_{3}^3 + F_{v_{\tau}v_{\tau}v_{\tau}} k_{3}^3.
	\end{aligned}
	\end{equation*}
	Here $F_{uuu}:=\frac{\partial ^3}{\partial u^3}F(\mu_0,0)$, and so forth.
%\end{appendices}
%% The Appendices part is started with the command \appendix;
%% appendix sections are then done as normal sections
%% \appendix
%%
%% \section{}
%% \label{}

%% References
%%
%% Following citation commands can be used in the body text:

% You may incorporate your references as follows in your main tex file.
% Using BibTex is not recommended but can be handled.
%\bibliographystyle{AIMS}
%\bibliography{mybibfile}

\medskip
% The data information below will be filled by AIMS editorial staff
Received xxxx 20xx; revised xxxx 20xx.
\medskip

\end{document}